\newtheorem{theorem}{Theorem}[section]
\newtheorem{lemma}[theorem]{Lemma}
\newtheorem{proposition}[theorem]{Proposition}
\newtheorem{corollary}[theorem]{Corollary}
\theoremstyle{definition}
\newtheorem{definition}[theorem]{Definition}
\theoremstyle{remark}
\newtheorem{rem}[theorem]{Remark}
\newtheorem{example}[theorem]{Example}
\newcommand\pf{\begin{proof}}
\newcommand\epf{\end{proof}}
\numberwithin{equation}{section}
\title{Hopf algebras having a dense big cell}
\author{Julien Bichon and Simon Riche}
\address{
Laboratoire de Math\'ematiques (UMR 6620), CNRS,
Universit\'e Blaise Pascal,
Complexe universitaire des C\'ezeaux,
63171~Aubi\`ere Cedex, France}
\email{Julien.Bichon@math.univ-bpclermont.fr, Simon.Riche@math.univ-bpclermont.fr}
\thanks{S.R. was supported by ANR Grants No.~ANR-09-JCJC-0102-01 and ANR-2010-BLAN-110-02.}
\subjclass[2010]{20G42, 16T05}
\begin{document}

\begin{abstract}
We discuss some axioms that ensure that a Hopf algebra
has its simple comodules classified using
an analogue of  the Borel--Weil construction.
More precisely we show that a Hopf algebra having a dense big cell
satisfies  the above requirement. This method
has its roots in the work of Parshall and Wang in the case of $q$-deformed
quantum groups ${\rm GL}$ and ${\rm SL}$. 
Here we examine the example of universal cosovereign Hopf algebras, for which
the weight group is the free group on two generators.
\end{abstract}

\maketitle

\section*{Introduction}

Let $G$ be connected reductive algebraic group (over a fixed algebraically closed field $k$), and 
let $T \subset B$ be a maximal torus and a Borel subgroup in $G$.
It is well known that there is a bijection between
isomorphism classes of simple $G$-modules and dominant weights of $T$. 
This correspondence is realized using the celebrated Borel--Weil construction, that goes as follows. 
Let $\lambda \in
\Lambda = X^*(T)=X^*(B)$ (these are the character groups of $T$ and $B$), 
and consider the induced $G$-module ${\rm Ind}_B^G(\lambda)$. Then
 ${\rm Ind}_B^G(\lambda)\not =\{0\}$ if and only if $\lambda$ is dominant, in which
 case ${\rm Ind}_B^G(\lambda)$ contains a unique simple 
 $G$-module, denoted $L(\lambda)$. The map defined in this way
 \[
 \left\{
 \begin{array}{ccc}
  \Lambda_+ & \longrightarrow & {\rm Irr}(G)\\
  \lambda &\longmapsto & L(\lambda)
 \end{array}
 \right.
 \]
 is then a bijection. See e.g.~\cite{jan}.

The aim of this paper is to discuss some axioms on a quantum group
that ensure that its simple representations are classified using
the above method. A quantum group  is understood here to
be the  dual object of a Hopf algebra, the latter playing
the role of a function algebra. 
The most popular quantum groups are the $q$-deformed Lie groups 
introduced independently by Drinfeld \cite{dri}, Jimbo \cite{jim} and Woronowicz \cite{wo1}
(see e.g.~the textbooks \cite{bg,jan2,ks} for these objects).
However there are other very interesting examples,
introduced e.g.~in the papers \cite{dvl,wa1, vdw, wa, basp}, related to free products
and free probability (see for example \cite{ba, bb,bc,kosp,bcs} for these kinds of developments),
that do not fit
into the $q$-deformation scheme.

The representation theory of the $q$-deformed quantum groups ${\rm GL}_q(n)$ and ${\rm SL}_q(n)$
was worked out along the classical lines of the Borel--Weil construction
by Parshall and Wang \cite{pw}. In their work, a crucial role is played
by the existence of the analogue of the dense big cell of a reductive
group (the big cell of the reductive group $G$ is the dense open subset $BB'$ for another well-chosen
Borel subgroup $B'$).

In this paper we propose an axiomatisation, at a Hopf algebra level,
of the notion of dense big cell (we do not give the technical definition in this introduction).
We show that the simple comodules of a Hopf algebra having a dense 
big cell are classified in a completely similar manner
to the reductive group case, by means of the Borel--Weil construction.
We hope that this will provide a useful general tool in the representation theory of
 quantum groups. 

 We do not claim that all Hopf algebras fit into this framework, but 
 we believe that after suitable reductions such as
 tensor equivalences, the simple comodules of a wide class of Hopf algebras
 can be descrided in this way. 
 
  We have borrowed many important ideas and arguments
to Parshall and Wang \cite{pw}, but at some points
some of their arguments had to be modified. This is due to the fact that
in general the ``weight group'' $\Lambda$ is non commutative, and 
we could not use a natural order on this group (and hence no natural
notion of a dominant weight).  So what is probably
missing here to complete the picture is a general theory of some kind of 
non commutative root systems, which certainly would require more axioms than the ones we have used.
 
 As an illustrative example, we study the universal cosovereign Hopf algebras \cite{bi1},
 an algebraic counterpart and generalization of Van Daele and Wang's universal
 compact quantum groups \cite{vdw}. In view of their universal property, the universal
 cosovereign Hopf algebras might be considered as some analogues of the general linear
 groups in quantum group theory. 
 As shown in \cite{bi2}, 
 after suitable tensor equivalence reductions, it is enough
 to study a family of Hopf algebras $H(q)$, $q \in k^\times$. We show that the Hopf algebras $H(q)$
 have a dense big cell, with $\mathbb F_2$, the free group on two generators, as a weight group.
 The simple $H(q)$-comodules are then classified by means of the Borel--Weil construction, and the monoid of dominant weights is the free monoid on two generators.
 The classification is characteristic free and does not depend on $q$, and therefore
 generalizes earlier results in the cosemisimple case \cite{ba,bi2}. Note that the combinatorial description of the Grothendieck ring of the category of comodules over the universal cosovereign Hopf algebras has already been given by Chirvasitu \cite{chi}. Our alternative approach  has the merit to provide explicit descriptions for the simple comodules. The problem of finding explicit models for the irreducible representations of universal quantum groups, in connection with Borel--Weil theory, was raised in \cite{wabcp}.
   
The paper is organized as follows. Section 1 consists of preliminaries. In Section 2 we define the Borel--Weil property and dense big cells for Hopf algebras. Section 3 is devoted to the proof of the fact that a Hopf algebra having a dense big cell has  the Borel--Weil property, i.e.~its simple comodules are classified by the Borel--Weil construction. 
In Section \ref{sec:SL2} we study the example of $\mathcal{O}(\mathrm{SL}_q(2))$.
In Sections 5--6 we develop some general tools to prove the existence of a dense big cell or to classify simple comodules,
related to free products and Hopf subalgebras. The results of these sections are used in Section 7 to study our main example, the universal cosovereign Hopf algebras. The existence of a dense big cell is shown for $H(q)$, the weight group being the free group on two generators, and explicit models for the simple comodules are described in general. The last Section 8 deals with the behaviour of dense big cells under 2-cocycle deformations, and we remark that the 2-cocycle deformation of a Hopf algebra having a dense big cell does not necessarily still have a dense big cell.

\textbf{Acknowledgements.} We warmly thank the referee for his careful reading of the manuscript, and for pertinent remarks and hints which improved several results in the paper (Propositions 1.1, 5.1, 6.2), removing unnecessary technical assumptions.

\section{Preliminaries}

We begin by  recalling the basic notions needed in the paper.

We work in general over an algebraically closed field $k$, set $\otimes=\otimes_k$.
We assume that the reader has  some familiarity with Hopf algebras, for which
the textbook \cite{mo}  is convenient. Our terminology and notation are the standard ones: in particular, for a Hopf algebra, $\Delta$, $\varepsilon$ and $S$ denote the comultiplication, counit and antipode, respectively.  

\subsection{Comodules over a Hopf algebra.} Let $H$ be a Hopf algebra.
An $H$-comodule is a vector space $V$ endowed with a linear map
$\alpha : V \longrightarrow V \otimes H$ such that
$$(\alpha \otimes {\rm id_H}) \circ \alpha = ({\rm id}_V \otimes \Delta) \circ \alpha \quad
{\rm and} \quad ({\rm id}_V \otimes \varepsilon) \circ \alpha= {\rm id}_V. $$
If $V$ is finite-dimensional with basis $v_1, \ldots , v_n$, then we have
$$\alpha(v_i)= \sum_{j=1}^n v_j \otimes x_{ji}$$
for elements $x_{ij} \in H$ satisfying
$$\Delta(x_{ij})= \sum_{k=1}^n x_{ik}\otimes x_{kj}, \quad \varepsilon(x_{ij}) =\delta_{ij}.$$
Such a matrix $x=(x_{ij}) \in M_n(H)$ is said to be multiplicative. 
Its entries are called the coefficients of $V$: they generate
a subcoalgebra $H(V)$ of $H$. The comodule $V$ is simple if and only if the
elements $x_{ij}$ are linearly independent in $H$, which is also equivalent 
to saying that $\dim(H(V)) = \dim(V)^2$. 
The matrix $x$ entirely determines
the comodule $V$, and conversely, a multiplicative matrix $x \in M_n(H)$
defines a comodule structure on the vector space $k^n$.

The motivating example is $H = \mathcal O(G)$, the Hopf algebra of polynomial functions on 
a linear $k$-algebraic group $G$: in this case the category of finite dimensional
$H$-comodules is equivalent the category of algebraic representations of $G$.

A simple $H$-comodule is necessarily finite-dimensional.
The set of isomorphism classes of simple $H$-comodules is denoted ${\rm Irr}(H)$, and if $V$ is a simple $H$-comodule, we denote by $[V]\in {\rm Irr}(H)$ its isomorphism class. 
The coradical of $H$, denoted $H_0$, is the (direct) sum of the simple subcoalgebras of $H$. Equivalently
$H_0= \oplus_{[V] \in {\rm Irr}(H)}H(V)$.

The one-dimensional $H$-comodules correspond to the group-like elements of $H$:
$${\rm Gr}(H) = \{g \in H \ | \ \Delta(g)= g \otimes g, \ \varepsilon(g)=1\};$$
we denote by $k_g$ the comodule associated with $g \in {\rm Gr}(H)$. Note that
${\rm Gr}(H)$ is a group for the multiplication of $H$.

We end the subsection by a result that turns out to be useful to prove that all the simple comodules have been determined. Similarly to the other results in this preliminary section, it is probably well known (it is at least known in the cosemisimple case), but in lack of suitable reference, we sketch a proof.

The result uses the formalism of the Grothendieck ring. Given a Hopf algebra $H$, let us say that its Grothendieck ring, denoted $K(H)$, is the Grothendieck ring of the tensor category of finite-dimensional comodules. Here we use the version of the Grothendieck ring for which the isomorphism classes of simple objects form a basis. Again, if $V$ is an $H$-comodule, its isomorphism class (in $K(H)$) is denoted $[V]$.

\begin{proposition}\label{completeset}
 Let $\mathcal J \subset {\rm Irr}(H)$. Assume that the following conditions hold.
\begin{enumerate}
 \item $\forall [V], [W] \in \mathcal J$, in $K(H)$ we have $[V] \cdot [W] = 
\sum_{i=1}^m [Z_i]$ for some $[Z_1], \ldots , [Z_m]  \in \mathcal J$.
\item $H_0$, the coradical of $H$, is contained in the subalgebra generated by the coalgebras $H(V)$, $[V] \in \mathcal J$.
\end{enumerate}
Then $\mathcal J = {\rm Irr}(H)$.
\end{proposition}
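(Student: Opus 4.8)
The plan is to study the subalgebra $A \subseteq H$ generated by the subcoalgebras $H(V)$, $[V] \in \mathcal J$. Since the product of two subcoalgebras is again a subcoalgebra, $A$ is a subbialgebra of $H$, and since the coradical of a subcoalgebra $C$ is $C \cap H_0$, hypothesis (2) gives $A_0 = A \cap H_0 = H_0$. As $H_0 \subseteq A$, the simple subcoalgebras of $A$ are exactly all the simple subcoalgebras of $H$, so it suffices to show each of them is some $H(V)$ with $[V] \in \mathcal J$; comparing with $H_0 = \bigoplus_{[S] \in \Irr(H)} H(S)$ then forces $\mathcal J = \Irr(H)$. Using the identities $H(V \otimes W) = H(V)H(W)$ and $H(V \oplus W) = H(V) + H(W)$, one sees that $A = k\cdot 1 + \sum_{W \in \mathcal C} H(W)$, where $\mathcal C$ is the class of comodules isomorphic to a finite direct sum of tensor products $V_1 \otimes \cdots \otimes V_r$ with each $[V_i] \in \mathcal J$; by (1) and induction in the Grothendieck ring, every object of $\mathcal C$ has all its composition factors in $\mathcal J$.

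The crux is the following Key Lemma: if $S$ is a simple comodule with $H(S) \subseteq H(W)$ for a comodule $W$, then $S$ is a composition factor of $W$. To prove it I would pass to the finite-dimensional dual algebra $R = H(W)^*$: the very definition of $H(W)$ as the coefficient space of $W$ means that the structure map $R \to \End(W)$ is injective, i.e. $W$ is a faithful $R$-module, and since $R$-modules coincide with $H(W)$-comodules the composition factors over $R$ and over $H$ agree. For a primitive idempotent $e \in R$ with simple top $T$ and any finite-dimensional $R$-module $M$, one has $eM \cong \Hom_R(Re, M)$, whose dimension equals the multiplicity of $T$ in $M$ because $Re$ is projective; faithfulness forces $eW \neq 0$ for every such $e$, so every simple $R$-module, in particular $S$, occurs in $W$. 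Granting the lemma, any simple subcoalgebra of $A$ that lies inside some $H(W)$ with $W \in \mathcal C$ is of the form $H(S)$ with $S$ a composition factor of $W$, whence $[S] \in \mathcal J$.

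It remains to control the unit. Set $B' = \sum_{W \in \mathcal C} H(W)$, a subcoalgebra of $A$. Either $1 \in B'$, in which case $A = B'$ and every simple subcoalgebra of $A$ lies in some $H(W)$, $W \in \mathcal C$; or $1 \notin B'$, in which case $A = k\cdot 1 \oplus B'$ is a \emph{direct sum of coalgebras} (there are no cross terms in $\Delta$, since $\Delta(1) = 1 \otimes 1$ and $\Delta(B') \subseteq B' \otimes B'$), so every simple subcoalgebra of $A$ is either $k\cdot 1$ or lies in $B'$. In both cases the previous paragraph gives $\Irr(H) \subseteq \{[k_1]\} \cup \mathcal J$. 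Finally, assuming $\mathcal J \neq \emptyset$, I would rule out a stray trivial summand by duality: if $[k_1] \notin \mathcal J$ then $\mathcal J$ consists of nontrivial simples and $\Irr(H) = \{[k_1]\} \sqcup \mathcal J$, so for $[V] \in \mathcal J$ the dual $V^*$ is again nontrivial simple and hence $[V^*] \in \mathcal J$; but then $V \otimes V^* \in \mathcal C$ has the trivial comodule as a composition factor (via $V \otimes V^* \twoheadrightarrow k_1$), forcing $[k_1] \in \mathcal J$ by (1), a contradiction. Hence $[k_1] \in \mathcal J$ and $\mathcal J = \Irr(H)$.

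The step I expect to be the main obstacle is the Key Lemma, since it is precisely what converts the inclusion of coefficient coalgebras supplied by (2) into genuine information about composition factors; the faithful-module computation with idempotents is its real content, the rest being bookkeeping with coefficient coalgebras. The remaining delicate point is the treatment of the unit and the trivial comodule, which I isolate above through the coalgebra decomposition $A = k\cdot 1 \oplus B'$ together with the duality argument $V \otimes V^* \twoheadrightarrow k_1$ (and the harmless nondegeneracy assumption $\mathcal J \neq \emptyset$).
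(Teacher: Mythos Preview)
Your proof is correct and follows the same route as the paper's brief argument: both establish that every simple $X$ is a composition factor of some direct sum of tensor products $V_1\otimes\cdots\otimes V_r$ with $[V_i]\in\mathcal J$, and then use condition~(1) inductively together with the fact that simples form a basis of $K(H)$. The paper reaches the subquotient statement directly from the right $H$-comodule structure on the coefficient coalgebras (each $H(W)$ is a quotient of $W^{\oplus\dim W}$, so $X\hookrightarrow H(X)\subset H_0\subset A$ exhibits $X$ as a subquotient), whereas your Key Lemma obtains the implication $H(S)\subset H(W)\Rightarrow S$ is a composition factor of $W$ by passing to the finite-dimensional dual algebra $H(W)^*$ and using primitive idempotents; both are standard and yield the same conclusion. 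Where your write-up is more scrupulous is the case $X=k_1$: since $1$ lies in the subalgebra $A$ for free, the inclusion $H_0\subset A$ gives no information about whether $[k_1]\in\mathcal J$, and your duality step $V\otimes V^*\twoheadrightarrow k_1$ (available because $H$ is a Hopf algebra) cleanly forces $[k_1]\in\mathcal J$ once you know $\Irr(H)\subset\{[k_1]\}\cup\mathcal J$. Your nondegeneracy assumption $\mathcal J\neq\emptyset$ is genuinely needed---if $H$ is connected (e.g.\ $H=U(\mathfrak g)$) then $\mathcal J=\emptyset$ satisfies both hypotheses vacuously but is not $\Irr(H)$---so the proposition as literally stated carries this tacit proviso, which the paper does not make explicit.
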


\begin{proof}
 The second assumption ensures that each simple $H$-comodule $X$ is isomorphic to a subquotient of a direct sum of tensor products of elements of $\mathcal J$ (see the proof of Proposition \ref{pointed} for a similar argument). The first assumption and the fact that the isomorphism classes of simple comodules form a basis of $K(H)$ now show that $[X] \in \mathcal J$.
\end{proof}

\subsection{Pointed Hopf algebras} 
A Hopf algebra $H$ is said to be \emph{pointed}
if all its simple comodules are  one-dimensional. Thus if $H$ is pointed we have
${\rm Irr}(H) = {\rm Gr}(H)$.
Here is a list of  important examples:
\begin{enumerate}
\item $k\Gamma$, the group algebra of a discrete group $\Gamma$;
 \item $\mathcal O(G)$, where $G$ is a connected solvable algebraic group
 (by the Lie--Kolchin theorem);
 \item $U(\mathfrak g)$, where $\mathfrak g$ is a Lie algebra;
 \item $U_q(\mathfrak g)$, the Drinfeld--Jimbo quantized algebra of a Kac--Moody algebra $\mathfrak g$.
\end{enumerate}
Observe also that a Hopf subalgebra of a pointed Hopf algebra is pointed.

The classification problem for finite-dimensional pointed Hopf algebras
has received much attention in recent years, see \cite{as}.
We wish to prove a few basic results concerning pointed Hopf algebras. 
All these results are probably well known.

Let $H$ be a Hopf algebra and let $V$ be a finite dimensional
$H$-comodule. An $H$-flag on $V$ is an increasing sequence of subcomodules
$$\{0\} = V_0 \subset V_1 \subset \cdots \subset V_{m-1} \subset V_m=V$$
with $\dim (V_i/V_{i-1})= 1$, $i=1, \ldots ,m$.    
Then $H$ is pointed if and only if any finite-dimensional $H$-comodule admits an $H$-flag.
(The proof is done by induction on the dimension of the comodules, the
details are left to the reader.) In matrix version, this remark has the following form. 

\begin{lemma}
 Let $H$ be a Hopf algebra. The following assertions are equivalent:
 \begin{enumerate}
  \item $H$ is pointed;
  \item  for any multiplicative matrix  $x \in M_n(H)$, there exists
  a  matrix $F \in {\rm GL}(n,k)$ such that the (multiplicative) matrix $FxF^{-1}$ is upper triangular;
  \item for any multiplicative matrix  $x \in M_n(H)$, there exists
  a matrix $F \in {\rm GL}(n,k)$ such that the (multiplicative)
  matrix $FxF^{-1}$ is lower triangular.
  \end{enumerate}
\end{lemma}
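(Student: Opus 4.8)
The strategy is to exploit the characterization recalled just above the statement, namely that $H$ is pointed if and only if every finite-dimensional $H$-comodule admits an $H$-flag, and to reformulate the existence of such a flag in terms of the defining multiplicative matrix. First I would invoke the dictionary between multiplicative matrices and comodules: a multiplicative matrix $x \in M_n(H)$ endows $V = k^n$ with the comodule structure $\alpha(v_i) = \sum_j v_j \otimes x_{ji}$, and a change of basis given by $F \in {\rm GL}(n,k)$ replaces $x$ by a conjugate $FxF^{-1}$, which is again multiplicative; conversely every such conjugate arises from a change of basis. The key elementary observation is that the full coordinate flag $V_i = \langle v_1, \dots, v_i \rangle$ consists of subcomodules exactly when $x_{jl} = 0$ for all $j > l$, that is, when $x$ is upper triangular: indeed requiring $\alpha(v_l) \in V_l \otimes H$ for each $l$ forces precisely the vanishing of the entries below the diagonal, and the converse is immediate. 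Since each quotient $V_i / V_{i-1}$ is one-dimensional, ``the coordinate flag is a chain of subcomodules'' is the same as ``the coordinate flag is an $H$-flag on $V$.''

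With this dictionary in hand, the implication $(1) \Rightarrow (2)$ goes as follows. Given a multiplicative $x \in M_n(H)$ with associated comodule $V$, pointedness provides an $H$-flag $\{0\} = V_0 \subset \cdots \subset V_n = V$. Choosing a basis $w_1, \dots, w_n$ adapted to this flag, so that $V_i = \langle w_1, \dots, w_i\rangle$, and letting $F \in {\rm GL}(n,k)$ be the corresponding change of basis, the matrix of $V$ in the new basis is $FxF^{-1}$, and by the observation above it is upper triangular. For the converse $(2) \Rightarrow (1)$, I would take an arbitrary finite-dimensional comodule $V$, pick any multiplicative matrix $x$ representing it, and use $(2)$ to conjugate $x$ into upper triangular form; reading the observation backwards, the adapted coordinate flag is then an $H$-flag on $V$. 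Since $V$ is arbitrary, every finite-dimensional comodule admits an $H$-flag, so $H$ is pointed.

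Finally, the equivalence $(1) \Leftrightarrow (3)$ is entirely analogous, replacing ``upper triangular'' by ``lower triangular'' and the ascending coordinate flag $\langle v_1, \dots, v_i\rangle$ by the descending one $\langle v_i, \dots, v_n\rangle$ (equivalently, conjugating additionally by the permutation matrix reversing the basis). A lower triangular multiplicative matrix is exactly one for which the descending coordinate flag is a chain of subcomodules, and the existence of an $H$-flag is insensitive to the direction in which it is indexed, so the same two implications carry over verbatim.

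I expect no serious obstacle here: the content is a change-of-basis bookkeeping argument resting on the flag characterization of pointedness. The only points requiring care are fixing conventions consistently, namely the exact form of the conjugation $x \mapsto FxF^{-1}$ induced by a change of basis (as opposed to $F^{-1}xF$, a harmless ambiguity since one replaces $F$ by $F^{-1}$), and matching the orientation of the triangularity with the indexing of the flag. Once these are pinned down, each implication is immediate.
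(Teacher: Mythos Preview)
Your proposal is correct and matches the paper's approach exactly: the paper does not give a proof of this lemma but presents it as the ``matrix version'' of the characterization that $H$ is pointed if and only if every finite-dimensional $H$-comodule admits an $H$-flag, which is precisely the dictionary you spell out. Your bookkeeping on change of basis, the flag/triangular correspondence, and the symmetry between upper and lower triangular forms is correct and is all that is needed.
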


The following lemma is used in the next section.

\begin{lemma}\label{dependent}
 Let $H$ be a pointed Hopf algebra, and let $x =(x_{ij})\in M_n(H)$ be a multiplicative matrix (with $n \geq 2$).
 The elements
 $$x_{11}, x_{21}, \ldots , x_{n1}, x_{12}, \ldots , x_{n2}$$
 are linearly dependent in $H$.
\end{lemma}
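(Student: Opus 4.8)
The plan is to combine the triangularization afforded by the preceding lemma with an elementary dimension count. The $2n$ elements in the statement are precisely the entries of the first two columns of $x$, and they span the subspace
$$M := \mathrm{span}_k\{x_{i1}, x_{i2} : 1 \le i \le n\} \subseteq H.$$
Showing that they are linearly dependent amounts to showing $\dim_k M \le 2n-1$, so the whole point is to produce one relation, i.e.\ to gain one dimension over the naive bound $2n$.

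Since $H$ is pointed, the previous lemma (part (3)) provides $F \in \mathrm{GL}(n,k)$ such that $y := FxF^{-1}$ is lower triangular, i.e.\ $y_{kl} = 0$ whenever $l > k$. Writing $x = F^{-1}yF$ and expanding entrywise gives, for $j \in \{1,2\}$,
$$x_{ij} = \sum_k (F^{-1})_{ik}\, v_k^{(j)}, \qquad v_k^{(j)} := \sum_l y_{kl} F_{lj} = \sum_{l \le k} y_{kl} F_{lj}.$$
Because $F^{-1} \in \mathrm{GL}(n,k)$, for each fixed $j$ the families $(x_{ij})_{i}$ and $(v_k^{(j)})_k$ have the same $k$-linear span; hence $M = \sum_{k=1}^n R_k$ with $R_k := \mathrm{span}_k\{v_k^{(1)}, v_k^{(2)}\}$.

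The gain comes from the first row of the triangular matrix. In general $\dim_k R_k \le 2$, but triangularity forces $v_1^{(1)} = y_{11}F_{11}$ and $v_1^{(2)} = y_{11}F_{12}$, which are both scalar multiples of the single element $y_{11}$, so $\dim_k R_1 \le 1$. By subadditivity of dimension,
$$\dim_k M \le \sum_{k=1}^n \dim_k R_k \le 1 + 2(n-1) = 2n-1.$$
Since $M$ is spanned by the $2n$ listed elements yet has dimension at most $2n-1$, these elements are linearly dependent, as claimed. (The hypothesis $n \ge 2$ merely guarantees that a second column exists, so that the statement is meaningful.)

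The one delicate step is the bookkeeping of the second paragraph: conjugation by $F$ mixes the columns of $x$, so one must check that passing to the triangular form $y$ preserves, column by column, the relevant spans — which it does precisely because $F^{-1}$ is invertible over $k$. Once this is in place, the content of the lemma is exactly the one-dimensional saving coming from the thin top row of a triangular multiplicative matrix; no further property of pointed Hopf algebras (such as the group-likeness of the diagonal entries $y_{kk}$) is needed.
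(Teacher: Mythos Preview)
Your proof is correct and rests on the same idea as the paper's: triangularize via the previous lemma and exploit the fact that one extremal row of the triangular matrix has a single nonzero entry, forcing the first two columns of $yF$ (equivalently $Fx$) to share a common one-dimensional piece. The only differences are cosmetic---you use the lower-triangular form and a dimension count $\dim M \le 1 + 2(n-1)$, whereas the paper uses the upper-triangular form and writes down the relation explicitly from the last row of $Fx = yF$ (namely $\sum_k f_{nk}x_{k1}$ and $\sum_k f_{nk}x_{k2}$ are both scalar multiples of $y_{nn}$).
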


\begin{proof}
By the previous lemma, there exists
$F=(f_{ij}) \in {\rm GL}(n,k)$  such that the matrix $FxF^{-1}=y =(y_{ij})$ is upper triangular.
So we have $Fx = y F$ and in particular
$$\sum_k f_{nk}x_{k1} = f_{n1}y_{nn} \quad {\rm and} \quad \sum_kf_{nk}x_{k2}=f_{n2}y_{nn}.$$
If $f_{n1}=0$ or $f_{n2}=0$ we are done, and otherwise we have
$$\sum_k f_{n1}^{-1}f_{nk}x_{k1} = \sum_kf_{n2}^{-1}f_{nk}x_{k2}$$
which proves our result.
\end{proof}

The following result is  useful to show that a Hopf algebra is pointed.

\begin{proposition}\label{pointed}
 Let $H$ be a Hopf algebra generated by the entries of 
 a family of upper or lower triangular multiplicative matrices.
 Then $H$ is pointed, and the group ${\rm Gr}(H)$ is generated by the diagonal entries of the previous
 matrices.
\end{proposition}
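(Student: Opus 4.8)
The plan is to show that every simple $H$-comodule admits an $H$-flag (hence is one-dimensional) by realizing it as a subcomodule of the coefficient coalgebra of a suitable flagged comodule built out of the given matrices, and then to read off $\mathrm{Gr}(H)$ from the diagonal entries. First I would record the elementary observations. Reversing the order of a basis turns a lower-triangular multiplicative matrix into an upper-triangular one and only changes the comodule up to isomorphism, so I may assume all the given matrices are upper triangular. For such a matrix $x=(x_{ij})$, coassociativity gives $\Delta(x_{ii})=\sum_k x_{ik}\otimes x_{ki}$, and upper-triangularity kills every term with $k\neq i$, so each diagonal entry is group-like; moreover the associated comodule $V$ carries the $H$-flag $\mathrm{span}(v_1,\dots,v_j)$ with one-dimensional quotients $k_{x_{jj}}$. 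Direct sums and tensor products of flagged comodules are again flagged (twist the flag of one factor by the group-likes coming from the other), and subcomodules and quotients of flagged comodules are flagged. Writing $V$ for the direct sum of the finitely many relevant generating comodules, which suffices since any finite subset of $H$ lies in the algebra generated by finitely many matrices, the comodule $W_M=\bigoplus_{m=0}^M V^{\otimes m}$ is flagged, with flag quotients $k_g$ where $g$ runs over products of the diagonal entries.

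Because $H$ is generated as an algebra by the entries, we have $H=\bigcup_M H(W_M)$, so any simple comodule $X$ satisfies $H(X)\subseteq H(W_M)$ for some $M$. The coaction $\alpha\colon X\to X\otimes H(X)$ is an injective comodule map (it is split by $\mathrm{id}\otimes\varepsilon$ and intertwines the coactions by coassociativity), which identifies $X$ with a subcomodule of $H(X)^{\oplus\dim X}\subseteq H(W_M)^{\oplus\dim X}$. Thus the whole argument reduces to one point, which I expect to be the main obstacle: the coefficient coalgebra $H(W_M)$ of the upper-triangular comodule $W_M$ is itself flagged as a right $H$-comodule. Granting this, $H(W_M)^{\oplus\dim X}$ is flagged, hence so is its subcomodule $X$; but a simple flagged comodule is one-dimensional, so $H$ is pointed.

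To settle this crux I would argue by rows. Let $z=(z_{ij})$ be the upper-triangular multiplicative matrix of $W_M$, so that $H(W_M)=\mathrm{span}\{z_{ij}\}$ and $\Delta(z_{ij})=\sum_{i\le k\le j} z_{ik}\otimes z_{kj}$. For fixed $i$ the span of the $i$-th row is a right subcomodule, and inside it the subspaces $\mathrm{span}\{z_{il}: l\le j\}$ form a flag with one-dimensional successive quotients $k_{z_{jj}}$; writing $H(W_M)$ as the increasing sum of these flagged row-subcomodules then shows $H(W_M)$ is flagged. In particular $H(W_M)$ is pointed as a coalgebra, and its group-likes are exactly the flag quotients, namely the diagonal entries $z_{jj}$, each of which is a product of the original diagonal entries.

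Finally, for the group statement, any $g\in\mathrm{Gr}(H)$ spans a one-dimensional subcoalgebra contained in some $H(W_M)$, hence is a group-like of $H(W_M)$; since distinct group-likes yield non-isomorphic comodules, $g$ must coincide with one of the flag quotients $z_{jj}$, i.e.\ with a product of diagonal entries of the original matrices. Therefore $\mathrm{Gr}(H)$ is generated (in fact already as a monoid) by those diagonal entries, which completes the proof.
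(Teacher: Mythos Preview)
Your proof is correct and follows essentially the same route as the paper: both embed an arbitrary comodule via its coaction into (copies of) the coefficient coalgebra of tensor powers of the generating comodules, and then use the closure of the flag property under tensor products, direct sums, subobjects, and quotients. Your row-by-row analysis of $H(W_M)$ makes explicit what the paper phrases abstractly as ``each $H_j$ is a subquotient of a direct sum of tensor products of the $U_i$.''

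One small difference is worth noting: the paper throws the duals $U_i^*$ into the mix (equivalently, the matrices $S(u_i)$), which is harmless but unnecessary since by hypothesis $H$ is already generated as an algebra by the entries of the $u_i$ alone. Because you avoid duals, your argument actually yields the slightly sharper conclusion that $\mathrm{Gr}(H)$ is generated as a \emph{monoid} by the diagonal entries, whereas the paper only asserts generation as a group; of course the two conclusions are equivalent a posteriori since $\mathrm{Gr}(H)$ is a group.
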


\begin{proof}
 We begin with a general remark: if $H$ is a Hopf algebra, then the property 
 of having an $H$-flag for $H$-comodules is stable under
 \begin{enumerate}
  \item forming tensor products and direct sums,
  \item taking duals,
  \item taking subobjects, quotients and subquotients.
 \end{enumerate}

Now let $H$ be a Hopf algebra generated by the coefficients
of a family $(u_i)_{i \in I}$ of
upper or lower triangular multiplicative matrices. Note that if $U_i$ denotes the $H$-comodule associated with $u_i$, then $U_i$ admits an $H$-flag since $u_i$ is either upper triangular or lower triangular.
$H$ is generated as an algebra by the coefficients of the matrices $u_i$ and $S(u_i)$. Hence
if $V$ is an $H$-comodule, then the comultiplication $\Delta$ realizes $V$ as a subcomodule of a direct sum of subcomodules $H_1, \cdots, H_d$ of $H$ where each $H_j$ is a subquotient of a direct sum of tensor products of comodules of the form $U_i$ or $U_i^*$. Then it follows from the remark at the beginning of the proof that $V$ admits an $H$-flag. 

If $V$ is simple (hence $1$-dimensional), then by the same arguments $V$ is a composition factor of a tensor product of comodules of the form $U_i$ or $U_i^*$. The group-likes corresponding to such composition factors clearly belong to the group generated by the diagonal
entries of the $u_i$'s, which finishes the proof.
%
%
\end{proof}

\begin{example}
 Let $\mathcal O(B_q)$ be the Hopf algebra of the $q$-deformed
 quantum group of lower triangular matrices, considered by Parshall and Wang \cite{pw}.
 It is shown in \cite[Theorem 6.5.2]{pw} that $\mathcal O(B_q)$ is pointed. 
 This can also be deduced from Proposition \ref{pointed}.
\end{example}

\subsection{Restriction and induction of comodules}
We now recall a few basic facts concerning restriction and induction of comodules.
This material is presented in greater detail in \cite{pw}.

Let $f : H \longrightarrow L$ be a Hopf algebra map and let
$V= (V, \alpha)$ be an $H$-comodule. Then the linear map
$\alpha'= ({\rm id}_V \otimes f) \circ \alpha : V \longrightarrow V \otimes L$
endows $V$ with an $L$-comodule structure, denoted $f_*(V)$ or simply again $V$ if no confusion
can arise.  This comodule is the restriction of the $H$-comodule $V$ to $L$.

When the $L$-comodule $f_*(V)$ has a one-dimensional subcomodule, we say that it has 
an $L$-stable line. If $L$ is pointed, then $f_*(V)$ always has an $L$-stable line, by the discussion in the previous subsection. 
The following definition will be convenient.

\begin{definition}
 Let $f : H \longrightarrow B$ be a Hopf algebra map with $B$ pointed and let
$V= (V, \alpha)$ be an $H$-comodule. A \textbf{$B$-weight} for $V$ is an element
$g \in {\rm Gr}(B)$ for which there exists a non zero element $v \in V$ such that
$({\rm id}_V \otimes f) \circ \alpha(v) = v \otimes g$,
the subspace $kv$ being then a $B$-stable line. We say that a $B$-weight is a \textbf{highest $B$-weight}
if $V$ has a unique $B$-stable line.
\end{definition}

We now come to the induction functor (i.e.~the right adjoint to the functor $f_*$). Again let $f : H \longrightarrow L$ be a Hopf algebra map, and let
$W= (W, \beta)$ be an $L$-comodule. The induced $H$-comodule, denoted 
${\rm Ind}_{L}^H(W)$, is the cotensor product $W \square_L H$ (see \cite{mo}). 
We do not recall the general definition since we only consider the case when
$W=k_g$ for some  $g \in {\rm Gr}(L)$. 
In this case we write  ${\rm Ind}_{L}^H(g):={\rm Ind}_{L}^H(k_g)$, and we have
\[
{\rm Ind}_{L}^H(g) = \{ x \in H \ | \ (f\otimes {\rm id_H}) \circ \Delta(x) =
g \otimes x\}\\
=
\{ x \in H \ | \ f(x_{(1)})\otimes x_{(2)} =
g \otimes x\}
\]
where we have used Sweedler's notation $\Delta(x)= x_{(1)} \otimes x_{(2)}$ in the second equality.

\section{Borel--Weil data and dense big cells}

We have now enough material to introduce in this section
the main definitions of the paper: Borel--Weil data and dense big cells.

\subsection{Borel--Weil data}
 We begin by proposing 
an axiomatic definition  for the Hopf algebras whose simple comodules
are described by an analogue of the Borel--Weil construction.

\begin{definition}
A \textbf{Borel--Weil datum} consists of a triple $(H,B,\pi)$ where  $H,B$ are Hopf algebras and $\pi : H \rightarrow B$ is a surjective Hopf algebra map, satisfying the following axioms.
\begin{enumerate}
\item The Hopf algebra $B$ is pointed.
\item For any $\lambda \in \Lambda:={\rm Gr}(B)$, the $H$-comodule ${\rm Ind}_B^H(\lambda)$
is either $\{0\}$ or contains a unique simple $H$-comodule, denoted $L(\lambda)$. 
\item Let $\Lambda_+ = \{ \lambda \in \Lambda \ | \ {\rm Ind}_B^H(\lambda) \not=\{0\}\}$. 
The map
\[
\left\{
\begin{array}{ccc}
\Lambda_+ &\longrightarrow & {\rm Irr}(H) \\
\lambda  &\longmapsto & [L(\lambda)]
\end{array}
\right.
\]
is a bijection, and $\Lambda_+$ is a submonoid of $\Lambda$.
\end{enumerate}
The group $\Lambda$ is called the \textbf{weight group} of the Borel--Weil datum $(H,B,\pi)$, and the weights  $\lambda \in \Lambda_+$ are said to be \textbf{dominant}. If the morphism $\pi$ is understood, a Borel--Weil datum $(H,B,\pi)$ is simply denoted $(H,B)$.
\end{definition}


\begin{rem}\label{submonoid}
If $H$ is an integral domain, then it is automatic that
 $\Lambda_+$ is a submonoid of  $\Lambda$.
Indeed it is clear that $1 \in \Lambda_+$.
 Let $\lambda, \mu \in \Lambda_+$: there exists non zero elements $x,y \in H$
 such that 
 $$\pi(x_{(1)}) \otimes x_{(2)} = \lambda \otimes x \quad {\rm and} \quad
 \pi(y_{(1)}) \otimes y_{(2)} = \mu \otimes y.$$
 Then
$$ \pi((xy)_{(1)}) \otimes (xy)_{(2)} =\pi(x_{(1)})\pi(y_{(1)}) \otimes x_{(2)} y_{(2)}
=  \lambda\mu \otimes xy
$$ 
and hence ${\rm Ind}_B^H(\lambda\mu)\not = \{0\}$.
\end{rem}

\begin{example}
\begin{enumerate}
\item
 If $B$ is a pointed Hopf algebra, then $(B,B,{\rm id}_B)$ is a Borel--Weil datum.
 \item
Our motivating example is the following: if $G$ is a connected reductive algebraic group and $B \subset G$ a Borel subgroup, then $(\mathcal O(G), \mathcal O(B))$
is a Borel--Weil datum.
\end{enumerate}
\end{example}


\begin{definition}
A  Hopf algebra $H$ is said to have the \textbf{Borel--Weil property}
when there exists a pointed Hopf algebra $B$ and a surjective Hopf algebra map  $\pi : H \rightarrow B$
 such that $(H,B, \pi)$ is a Borel--Weil
datum.
\end{definition}


Parshall and Wang have shown in \cite{pw} that the Hopf algebras $\mathcal O({\rm GL}_q(n))$ and
 $\mathcal O({\rm SL}_q(n))$ have the Borel--Weil property, thus describing the simple representations
 of the corresponding quantum groups.
 This is the first example of a ``noncommutative Borel--Weil situation,'' and one of the main sources of motivation for the present work.

\subsection{Dense big cells}
A key technical ingredient in \cite{pw} to prove that the Hopf algebras $\mathcal O({\rm GL}_q(n))$ and
 $\mathcal O({\rm SL}_q(n))$ have the Borel--Weil property in the presence of an analogue
 of the dense big cell of a reductive group.
 Recall that if $G$ is a reductive algebraic group, then 
 there exists Borel subgroups $B$, $B'$ of $G$ such that the ``big cell''
 $BB'$ is dense in $G$ (see \cite{bor}).
 Here is  the corresponding axiomatization. 

\begin{definition}
 Let $H$ be a Hopf algebra.
  A \textbf{dense big cell} for $H$ consists of a triple $(B,B',\Lambda)$ formed by two pointed Hopf algebras $B,B'$ and a discrete group $\Lambda$ (called the \textbf{weight group}) together with surjective Hopf algebra maps 
\[
\xymatrix@C=1.5cm{
H \ar@{->>}[r]^-{\pi} & B \ar@{->>}[r]^-{\psi} & k\Lambda
}
\quad \text{and} \quad
\xymatrix@C=1.5cm{
H \ar@{->>}[r]^-{\pi'} & B' \ar@{->>}[r]^-{\psi'} & k\Lambda
}
\]
satisfying the following axioms.
\begin{enumerate}
\item We have $\psi \circ \pi = \psi' \circ\pi'$,
and $\psi$, $\psi'$ induce  group isomorphisms ${\rm Gr}(B) \simeq \Lambda$ and ${\rm Gr}(B') \simeq \Lambda$.
\item The algebra map 
\[
\theta = (\pi \otimes \pi') \circ \Delta : \left\{ 
\begin{array}{ccc}
H & \longrightarrow & B \otimes B' \\
 x &\longmapsto & \pi(x_{(1)}) \otimes \pi'(x_{(2)})
\end{array} \right.
\]
is injective
\end{enumerate}
\end{definition}

It is shown in \cite[Theorem 8.1.1]{pw} that the Hopf algebras 
$\mathcal O({\rm GL}_q(n))$ and
 $\mathcal O({\rm SL}_q(n))$ have a bense big cell. More precisely
$(\mathcal O(B_q), \mathcal O(B'_q),\mathbb Z^n)$ is a dense big cell for 
$\mathcal O({\rm GL}_q(n))$, where $\mathcal O(B_q)$ and $\mathcal O(B'_q)$ are the 
natural $q$-analogues of the Borel subgroups of lower and upper triangular matrices.
This result was a crucial tool in the proof by Parshall and Wang that the simple
$\mathcal O({\rm GL}_q(n))$-comodules are  classified by the dominant
weights in $\mathbb Z^n$, using the Borel--Weil construction, similarly to the classical case
(see \cite[Theorems 8.3.1 and 8.7.2]{pw}). 

Note that our setting allows some ``degenerate'' examples, whose behavior might be quite different from the one of reductive algebraic groups. For instance, if $G$ is any connected solvable algebraic group, $T \subset G$ is a maximal torus, and $G' \subset G$ is a connected closed subgroup of $G$ containing $T$, then $(\mathcal{O}(G'),\mathcal{O}(G),X^*(T))$ is a dense big cell for $\mathcal{O}(G)$. Other examples include algebras of functions on Frobenius kernels of connected reductive algebraic groups in positive characteristic, see \cite[Lemma II.3.2]{jan}.

Our main result states that a Hopf algebra having a dense big cell
automatically has the Borel--Weil property.

\begin{theorem}\label{main}
Let $H$ be a Hopf algebra having a dense big cell.
Then $H$ has the Borel--Weil property. More precisely, if
 $(B,B',\Lambda)$ is a dense big cell for $H$, then $(H,B)$ is a Borel--Weil datum with weight group $\Lambda$.
\end{theorem}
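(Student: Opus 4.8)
The plan is to verify the three axioms of a Borel--Weil datum for $(H,B,\pi)$, given that $(B,B',\Lambda)$ is a dense big cell. Axiom (1) is immediate, since $B$ is pointed by the definition of a dense big cell. The entire substance lies in axioms (2) and (3). The central technical device should be the injective algebra map $\theta = (\pi \otimes \pi') \circ \Delta : H \to B \otimes B'$, together with the identification ${\rm Gr}(B) \simeq \Lambda \simeq {\rm Gr}(B')$ coming from $\psi,\psi'$. The guiding intuition, borrowed from Parshall--Wang, is that $\theta$ realizes $H$ as sitting inside $B \otimes B'$, so a highest-weight theory in the pointed algebras $B$ and $B'$ can be transported back to $H$.

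\medskip

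First I would analyze ${\rm Ind}_B^H(\lambda) = \{x \in H \mid \pi(x_{(1)}) \otimes x_{(2)} = \lambda \otimes x\}$ and, symmetrically, the analogous induced comodule for $B'$ through $\pi'$. The key computation is to push these conditions through $\theta$: for $x \in {\rm Ind}_B^H(\lambda)$ one finds that $\theta(x)$ lies in the ``$\lambda$-isotypic'' piece $(k\lambda) \otimes B'$ of $B \otimes B'$, using that $\psi$ realizes ${\rm Gr}(B) \simeq \Lambda$ and that $\psi \circ \pi = \psi' \circ \pi'$ glues the two Borel structures along $\Lambda$. Injectivity of $\theta$ then lets me compare an element of ${\rm Ind}_B^H(\lambda)$ with highest-weight vectors for the $B'$-action, reducing statements about $H$-comodules to statements about group-likes in the pointed algebras. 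This is where I expect to extract the uniqueness in axiom (2): a nonzero ${\rm Ind}_B^H(\lambda)$ should contain a simple $H$-comodule with a unique $B$-stable line of weight $\lambda$, the uniqueness being forced by the injectivity of $\theta$ and the fact that $B,B'$ are pointed (so their comodules are triangularizable by Proposition \ref{pointed} and the surrounding lemmas).

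\medskip

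For axiom (3), I would establish that $\lambda \mapsto [L(\lambda)]$ is both injective and surjective onto ${\rm Irr}(H)$. Injectivity follows once I show that $L(\lambda)$ determines its highest weight $\lambda$: distinct dominant weights give non-isomorphic simple comodules because a $B$-weight is detected after applying $\pi$, and the uniqueness of the $B$-stable line pins down $\lambda$. Surjectivity is the reciprocal direction: given any simple $H$-comodule $V$, restricting along $\pi$ to the pointed algebra $B$ produces a $B$-stable line (since $B$ is pointed, every $B$-comodule has one), hence a $B$-weight $\lambda$; one then shows $V$ embeds in ${\rm Ind}_B^H(\lambda)$ by the adjunction between $\pi_*$ and ${\rm Ind}_B^H$, so $\lambda$ is dominant and $V \simeq L(\lambda)$ by the uniqueness in axiom (2). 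That $\Lambda_+$ is a submonoid I would argue as in Remark \ref{submonoid}, except that without the integral-domain hypothesis I must instead use $\theta$: the injectivity of $\theta$ makes $H$ behave like a subalgebra of $B \otimes B'$, and the product of two highest-weight vectors of weights $\lambda,\mu$ maps under $\theta$ to a nonzero element of weight $\lambda\mu$, so ${\rm Ind}_B^H(\lambda\mu) \neq \{0\}$.

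\medskip

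The hard part will be axiom (2), specifically proving that a nonzero ${\rm Ind}_B^H(\lambda)$ contains a \emph{unique} simple subcomodule with a \emph{unique} highest $B$-weight line. The danger is that in the noncommutative setting there is no order on $\Lambda$ and hence no a priori notion of ``highest'' to exploit, so the usual reductive-group argument collapses. I expect the resolution to come entirely from the injectivity of $\theta$: it should force any two $B$-stable lines in the socle, or any two highest-weight vectors, to coincide after transport into $B \otimes B'$, where the pointedness of both $B$ and $B'$ makes the weight-space decompositions rigid enough to conclude. Verifying that this rigidity genuinely yields uniqueness, rather than merely finiteness, is the crux of the proof.
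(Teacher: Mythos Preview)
Your plan is broadly correct and locates the right tools (injectivity of $\theta$, pointedness), but there is a systematic conflation of the roles of $B$ and $B'$ that would cause the argument to break down as written. The paper's proof uses the two quotients \emph{asymmetrically}: $B$ appears only in the definition of ${\rm Ind}_B^H(\lambda)$, while all ``highest weight'' statements are phrased in terms of $B'$-stable lines. Concretely, the three steps are: (i) every simple $H$-comodule has a unique $B'$-stable line (Proposition \ref{highest}, using Lemma \ref{dependent} applied to the matrix $(\pi(x_{ij}))$ in $B$, then transported via $\psi\pi=\psi'\pi'$ and the injectivity of $\theta$); (ii) if ${\rm Ind}_B^H(\lambda)\neq 0$ then it has a unique $B'$-stable line, whose $B'$-weight is precisely $\lambda$ (Proposition \ref{pw}: for such a line $kx$ one computes $\theta(x)=\varepsilon(x)\,\nu(\lambda)\otimes\nu'(\lambda)$, and injectivity of $\theta$ gives both $\varepsilon(x)\neq 0$ and uniqueness); (iii) any simple $V$ with highest $B'$-weight $\lambda$ embeds in ${\rm Ind}_B^H(\lambda)$ (Proposition \ref{dominant}, by lower-triangularizing $(\pi(x_{ij}))$ and sending $v_i\mapsto x_{1i}$).

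Your version replaces ``$B'$-stable line'' by ``$B$-stable line'' throughout, and this creates two concrete problems. First, the condition defining ${\rm Ind}_B^H(\lambda)$ applies $\pi$ to the \emph{left} tensor factor of $\Delta(x)$, whereas a $B$-stable line (for the right $H$-comodule structure) applies $\pi$ on the \emph{right}; so there is no direct reason the $B$-stable line of $L(\lambda)$ should have weight $\lambda$, and your injectivity argument for axiom (3) does not go through. It is exactly the compatibility $\psi\pi=\psi'\pi'$ that lets one pass from the left-$B$ condition to a right-$B'$ weight, which is why the paper switches to $B'$. Second, your adjunction step is in the wrong direction: a $B$-stable line $k_\lambda\hookrightarrow\pi_*(V)$ does not yield a map $V\to{\rm Ind}_B^H(\lambda)$; one needs a $B$-\emph{quotient} $\pi_*(V)\twoheadrightarrow k_\mu$ (equivalently, the top row after lower-triangularizing), and then one must still check $\mu=\lambda$, which the paper does via step (ii). Finally, you correctly flag uniqueness as the crux, but the specific mechanism---Lemma \ref{dependent} producing a linear dependence among the first two columns of $(\pi(x_{ij}))$ in $B$, which is then pushed through $\theta$ to contradict simplicity of $V$---is the non-obvious ingredient, and your sketch does not indicate it.
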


A large part of the proof (given in the next section) consists in verifying that the arguments 
given in  \cite{pw} still hold in this more general framework.
At some occasions the arguments have to be modified
(and the statements are weaker), because Parshall and Wang
used an order on the abelian weight group, and here
we do not have a  natural
order in general on the possibly non commutative weight group $\Lambda$. 

The other consequence of
the absence of an order on the weight group is that we have no combinatorial
criterion that ensures that a weight is ``dominant''.
This is certainly something that is missing in our framework, but which seems difficult to reach at this level of generality. We do not have any
general statement in this direction, for which we probably would need some kind of non commutative
root system theory.


\section{Proof of Theorem \ref{main}}
\label{sec:proof}

First let us
fix some notation. Let $H$ be a Hopf algebra  having a dense big cell 
$(B,B',\Lambda)$. 
We let $\nu : \Lambda \longrightarrow {\rm Gr}(B)$ (respectively $\nu' : \Lambda \longrightarrow {\rm Gr}(B')$) be the inverse of the isomorphism induced by $\psi$ (respectively $\psi'$)
and for $\lambda \in \Lambda$, we denote by ${\rm Ind}_B^H(\lambda)$ the induced $H$-comodule ${\rm Ind}_B^H(\nu(\lambda))$. Similarly, we say that $\lambda \in \Lambda$ is a highest $B'$-weight if $\nu'(\lambda)$ is a highest $B'$-weight.

We begin by showing that a simple $H$-comodule has a unique $B'$-stable line, and hence
a highest $B'$-weight.

\begin{proposition}\label{highest}
 Let $V$ be a simple $H$-comodule. Then $V$ has a unique 
 $B'$-stable line, and hence a highest $B'$-weight. Thus we have a map
$${\rm Irr}(H) \longrightarrow {\rm Gr}(B')\simeq\Lambda$$
sending the isomorphism class of a simple $H$-comodule to its highest $B'$-weight.
\end{proposition}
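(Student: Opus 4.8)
The plan is to separate existence from uniqueness. Existence of a $B'$-stable line is immediate: since $B'$ is pointed, the restricted comodule $\pi'_*(V)$ admits a one-dimensional subcomodule, i.e.\ a nonzero $B'$-weight vector. All the content is in uniqueness, and my strategy is to pair a $B'$-weight vector against a $B$-weight \emph{covector} and show, using the injectivity of $\theta$, that such a pairing can never vanish; the grading coming from axiom (1) of a dense big cell then forces all $B'$-weight vectors to share a single weight.

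First I would record the factorization of the full coaction. Writing $\beta = ({\rm id}_V\otimes\pi)\circ\alpha$ and $\beta'=({\rm id}_V\otimes\pi')\circ\alpha$ for the $B$- and $B'$-coactions, coassociativity gives $({\rm id}_V\otimes\theta)\circ\alpha = (\beta\otimes{\rm id}_{B'})\circ\beta'$. Now let $w$ be a nonzero $B'$-weight vector of weight $\nu'(\mu)$, and let $\phi\in V^*$ be a nonzero $B$-weight covector of weight $\nu(\kappa)$; such a $\phi$ exists because $V^*$ is a finite-dimensional $B$-comodule and $B$ is pointed. Evaluating the factorization on $w$ and applying $\phi$ to the $V$-slot, using $\beta'(w)=w\otimes\nu'(\mu)$ and $(\phi\otimes{\rm id}_B)\beta(w)=\phi(w)\,\nu(\kappa)$, I obtain
\[
\theta\big((\phi\otimes{\rm id}_H)\alpha(w)\big) = \phi(w)\cdot\big(\nu(\kappa)\otimes\nu'(\mu)\big)\in B\otimes B'.
\]
If $\phi(w)=0$, the right-hand side vanishes, so injectivity of $\theta$ forces $(\phi\otimes{\rm id}_H)\alpha(w)=0$; but this says exactly that $\phi$ vanishes on the $H$-subcomodule generated by $w$, which is all of $V$ since $V$ is simple and $w\neq 0$, whence $\phi=0$, a contradiction. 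Thus $\phi(w)\neq 0$ for \emph{every} nonzero $B'$-weight vector $w$. In particular, restricting $w\mapsto\phi(w)$ to any fixed $B'$-weight space (a space of common $B'$-eigenvectors) is injective into $k$, so each $B'$-weight space is at most one-dimensional.

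It remains to see that only one weight can occur, and here I would use axiom (1). The common map $\chi:=\psi\circ\pi=\psi'\circ\pi':H\to k\Lambda$ makes $V$ a $\Lambda$-graded space (a $k\Lambda$-comodule), and a $B'$-weight vector of weight $\nu'(\mu)$ is homogeneous of degree $\mu$, since $({\rm id}_V\otimes\psi')\beta'(w)=({\rm id}_V\otimes\chi)\alpha(w)=w\otimes\mu$; likewise $\phi$ is homogeneous of degree $\kappa$ in $V^*$. Since the evaluation $V^*\otimes V\to k$ is a morphism of $H$-comodules, hence of $\Lambda$-graded spaces into the trivial grading, $\phi(w)\neq 0$ forces $\kappa\mu=1$, i.e.\ $\mu=\kappa^{-1}$. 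Combined with the previous paragraph, every nonzero $B'$-weight vector has the \emph{same} weight $\nu'(\kappa^{-1})$, and its weight space is at most one-dimensional; together with existence this yields a unique $B'$-stable line, whose weight is then by definition a highest $B'$-weight. Sending $[V]$ to this weight gives the desired map ${\rm Irr}(H)\to{\rm Gr}(B')\simeq\Lambda$.

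The crux is the non-vanishing of $\phi(w)$, where the injectivity of $\theta$ enters essentially: it is what replaces, in this orderless setting, the extremality argument that Parshall and Wang run using the order on the abelian weight group, while the grading argument of the last paragraph substitutes for the statement that ``the highest weight is unique''. I expect the only delicate points to be the bookkeeping with Sweedler notation in the factorization identity and the degree conventions on $V^*$, both of which are routine.
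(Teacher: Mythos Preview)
Your proof is correct and follows a genuinely different path from the paper's. The paper argues by contradiction using its Lemma~\ref{dependent}: assuming two linearly independent $B'$-weight vectors $v_1,v_2$, it knows that the first two columns of the matrix $(\pi(x_{ij}))$ are linearly dependent in the pointed Hopf algebra $B$, and then pushes this relation through $\psi\pi=\psi'\pi'$ and the injectivity of $\theta$ to manufacture a nontrivial linear dependence among the matrix coefficients $x_{ij}$ themselves, contradicting simplicity. Your argument replaces that explicit column manipulation by the choice of a single $B$-weight covector $\phi$: the identity $\theta\big((\phi\otimes\mathrm{id}_H)\alpha(w)\big)=\phi(w)\,\nu(\kappa)\otimes\nu'(\mu)$, combined with injectivity of $\theta$ and simplicity of $V$, forces $\phi(w)\neq 0$ for every nonzero $B'$-weight vector $w$; this already gives one-dimensionality of each $B'$-weight space, and the common $\Lambda$-grading via $\psi\pi=\psi'\pi'$ then pins down the weight. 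Your route is more conceptual---it is really the statement that a ``lowest'' $B$-covector pairs nondegenerately with every $B'$-highest vector---and it bypasses Lemma~\ref{dependent} entirely while making transparent why axiom~(1) of a dense big cell enters. The paper's route, by contrast, stays entirely inside $V$ and avoids introducing the dual comodule.

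One cosmetic point: with your defining equation $(\phi\otimes\mathrm{id}_B)\beta(v)=\phi(v)\,\nu(\kappa)$, applying $\psi$ directly shows that $\phi$ is supported on the degree-$\kappa$ component of $V$, so $\phi(w)\neq 0$ forces $\mu=\kappa$ rather than $\kappa\mu=1$; equivalently, in the right $H$-comodule structure on $V^*$ (via the antipode) your $\phi$ is homogeneous of degree $\kappa^{-1}$, not $\kappa$. This does not affect the conclusion.
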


\begin{proof}
The Hopf algebra $B'$ is pointed, hence $V$ has a $B'$-stable line. We have to prove that it is unique.
So assume that we have linearly independent vectors $v_1, v_2 \in V$
with
$$({\rm id}_V \otimes \pi')\circ \alpha(v_i) = v_i \otimes g_i, \ i=1,2$$
and $g_1,g_2 \in {\rm Gr}(B')$. Let us extend $v_1,v_2$ into a basis $v_1, \ldots , v_n$ of $V$,
and let $(x_{ij}) \in M_n(H)$
be the multiplicative matrix such that
$$\alpha(v_i) = \sum_j v_j \otimes x_{ji}.$$
We have 
\begin{equation}
\label{eqn:pi-pi'}
\pi'(x_{i1}) = \delta_{i1}g_1 \quad {\rm and} \quad
\pi'(x_{i2}) = \delta_{i2}g_2.
\end{equation}
By Lemma \ref{dependent}, the elements
$$\pi(x_{11}), \pi(x_{21}), \ldots , \pi(x_{n1}), \pi(x_{12}), \ldots , \pi(x_{n2})$$
are linearly dependent in the pointed Hopf algebra $B$, and hence there exists
some scalars $r_1, \ldots , r_n, s_1, \ldots s_n \in k$ with some $r_i$ or $s_i$ non zero such that
$$
\sum_i r_i\pi(x_{i1}) +s_i\pi(x_{i2})=0.$$ 
Hence we have
$$
 \sum_i \sum_l r_i\pi(x_{il})\otimes \pi(x_{l1}) +s_i\pi(x_{il})\otimes \pi(x_{l2}) =0$$
and then in $B \otimes k\Lambda$ we have
\begin{align*}
 0 & = \sum_i \sum_l r_i\pi(x_{il})\otimes \psi\pi(x_{l1}) +s_i\pi(x_{il})\otimes \psi\pi(x_{l2}) \\
& = \sum_i \sum_l r_i\pi(x_{il})\otimes \psi'\pi'(x_{l1}) +s_i\pi(x_{il})\otimes \psi'\pi'(x_{l2}) \\
& = \sum_i  r_i\pi(x_{i1})\otimes \psi'(g_1) +s_i\pi(x_{i2})\otimes \psi'(g_2)
\end{align*}
where in the last equality we have used \eqref{eqn:pi-pi'}.
Using the morphism $\nu'$,
we get in $B \otimes B'$
\begin{align*}
 0 &= \sum_i  r_i\pi(x_{i1})\otimes g_1 +s_i\pi(x_{i2})\otimes  g_2 \\
 &= \sum_i \sum_l r_i\pi(x_{il})\otimes \pi'(x_{l1}) +s_i\pi(x_{il})\otimes \pi'(x_{l2})\\
 & = \theta\left(\sum_i r_ix_{i1} +s_ix_{i2}\right) 
\end{align*}
and we conclude from the injectivity of $\theta$ that $\sum_i r_ix_{i1} +s_ix_{i2}=0$.
The comodule $V$ is simple, hence the elements $x_{ij}$ are linearly independent
in $H$, hence $r_1= \cdots =r_n=s_1= \cdots  =s_n=0$, a contradiction.

 It is easy to check that two isomorphic simple $H$-comodules  have the same  $B'$-weights, and this concludes the proof.
\end{proof}

We now proceed to study comodules induced by elements of $\Lambda \simeq {\rm Gr}(B)$.
The next result is similar to \cite[Theorem 8.3.1]{pw}, with the same proof,
that we include for the sake of completeness.

\begin{proposition}\label{pw}
 Let $\lambda \in \Lambda$. Assume that ${\rm Ind}_B^H(\lambda) \not =\{0\}$.
 Then ${\rm Ind}_B^H(\lambda)$ contains a unique $B'$-stable line
and hence a unique simple $H$-subcomodule, whose highest $B'$-weight is $\lambda$, and that we denote $L(\lambda)$. 
\end{proposition}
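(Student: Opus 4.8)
The plan is to exploit the injectivity of $\theta = (\pi \otimes \pi')\circ\Delta$ in order to transport the whole question into the pointed Hopf algebra $B'$, where one-dimensional subcomodules are completely transparent. The starting observation is an identity: for $x \in {\rm Ind}_B^H(\lambda)$, applying $\mathrm{id}_B \otimes \pi'$ to the defining relation $\pi(x_{(1)}) \otimes x_{(2)} = \nu(\lambda) \otimes x$ gives
$$\theta(x) = \pi(x_{(1)}) \otimes \pi'(x_{(2)}) = \nu(\lambda) \otimes \pi'(x).$$
Since $\nu(\lambda)$ is a fixed nonzero element and $\theta$ is injective, this shows that $\pi'$ is injective on ${\rm Ind}_B^H(\lambda)$. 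As $\pi'$ is a coalgebra map, it is moreover $B'$-colinear from ${\rm Ind}_B^H(\lambda)$ (with the $H$-comodule structure restricted to $B'$ via $\pi'$) to $B'$ endowed with its right regular comodule structure $\Delta_{B'}$. Thus $\pi'$ realizes ${\rm Ind}_B^H(\lambda)$ as a $B'$-subcomodule of the regular comodule $B'$.

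Next I would classify the $B'$-stable lines of $(B',\Delta_{B'})$: if $kb$ is a one-dimensional subcomodule then $\Delta_{B'}(b) = b \otimes g'$ for some $g' \in \Gr(B')$, and applying $\varepsilon \otimes \mathrm{id}$ forces $b \in kg'$. Hence the $B'$-stable lines of the regular comodule are exactly the lines $kg'$ spanned by group-likes. Transporting back along the injective colinear map $\pi'$, the $B'$-stable lines of ${\rm Ind}_B^H(\lambda)$ are in bijection with the group-likes $g' \in \Gr(B')$ that lie in $\pi'({\rm Ind}_B^H(\lambda))$.

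The decisive step is to pin down this group-like, and it is here that the compatibility $\psi\circ\pi = \psi'\circ\pi'$ is used. Writing $p = \psi\pi = \psi'\pi' : H \to k\Lambda$, I would apply $\psi \otimes \mathrm{id}_H$ to the defining relation to get $p(x_{(1)}) \otimes x_{(2)} = \lambda \otimes x$, and then $\mathrm{id} \otimes p$ to obtain $\Delta_{k\Lambda}(p(x)) = \lambda \otimes p(x)$. If $x$ spans a $B'$-stable line with $\pi'(x) = g'$ group-like of weight $\mu = \psi'(g')$, then $p(x) = \mu$ is group-like, so $\mu \otimes \mu = \lambda \otimes \mu$ in $k\Lambda \otimes k\Lambda$, which forces $\mu = \lambda$, i.e.~$g' = \nu'(\lambda)$. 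Hence $\nu'(\lambda)$ is the only possible group-like in the image.

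Finally I would assemble the statement. Existence of a $B'$-stable line holds since $B'$ is pointed and ${\rm Ind}_B^H(\lambda)$ is a nonzero (locally finite) comodule, and its weight is necessarily $\lambda$; uniqueness follows because any two $B'$-stable lines map into $k\nu'(\lambda)$ under the injective $\pi'$, hence coincide. For the unique simple subcomodule: any nonzero $H$-subcomodule $W$ contains a $B'$-stable line (pointedness of $B'$), which must be the unique line $\ell$, so $\ell \subseteq W$; thus two simple $H$-subcomodules both contain $\ell \neq 0$, and their nonzero intersection forces them to be equal, while local finiteness guarantees that at least one exists. The resulting $L(\lambda)$ has $\ell$ as its only $B'$-stable line, so its highest $B'$-weight is $\lambda$. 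I expect the main obstacle to be precisely the weight computation forcing $g' = \nu'(\lambda)$, since this is the one point where all the axioms of a dense big cell must be combined; the remaining arguments are standard pointedness and socle bookkeeping.
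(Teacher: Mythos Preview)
Your proof is correct. The paper's argument is closely related but organized element-wise: given a nonzero $x$ spanning a $B'$-stable line of weight $\mu$, it computes directly that $\pi(x)=\varepsilon(x)\nu(\lambda)$ and $\pi'(x)=\varepsilon(x)\nu'(\mu)$, hence $\theta(x)=\varepsilon(x)\,\nu(\lambda)\otimes\nu'(\mu)$; injectivity of $\theta$ forces $\varepsilon(x)\neq 0$, then the compatibility $\psi\pi=\psi'\pi'$ gives $\lambda=\mu$, and the explicit formula $\theta(x)=\varepsilon(x)\,\nu(\lambda)\otimes\nu'(\lambda)$ together with injectivity of $\theta$ yields uniqueness of the line.

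Your version repackages the same two ingredients---injectivity of $\theta$ and the compatibility $\psi\pi=\psi'\pi'$---into a more structural statement: the identity $\theta(x)=\nu(\lambda)\otimes\pi'(x)$ for all $x\in{\rm Ind}_B^H(\lambda)$ shows that $\pi'$ embeds ${\rm Ind}_B^H(\lambda)$ as a $B'$-subcomodule of the regular comodule $(B',\Delta_{B'})$, where stable lines are transparently the lines spanned by group-likes; the compatibility then singles out $\nu'(\lambda)$ as the only possible group-like in the image. The gain of your approach is conceptual cleanliness: once the embedding is in place, uniqueness is automatic from injectivity of $\pi'$, and no separate computation of $\theta(x)$ for each candidate line is needed. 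The paper's computation is a little shorter and does not require checking that preimages of subcomodules under injective colinear maps are again subcomodules. One cosmetic point: when you write ``$\pi'(x)=g'$ group-like'' you are implicitly normalizing $x$ by a nonzero scalar, which is legitimate since $\pi'(x)\neq 0$ by the injectivity you just established; making this explicit would tighten the exposition.
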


\begin{proof}
We already know that the $H$-comodule ${\rm Ind}_B^H(\lambda)$
contains a $B'$-stable line, and we have to prove that it is unique.
Let $x \in {\rm Ind}_B^H(\lambda)$, $x\not=0$, be such that
$k x$ is a $B'$-stable line. Then we have
$$\pi(x_{(1)}) \otimes x_{(2)}= \nu(\lambda) \otimes x \quad {\rm and} \quad
x_{(1)}\otimes\pi'(x_{(2)}) = x \otimes \nu'(\mu)$$
for some $\mu \in \Lambda$.
Hence we have
$$\pi(x)=\varepsilon(x)\nu(\lambda) \quad {\rm and} \quad
\pi'(x)= \varepsilon(x)\nu'(\mu)$$
and
$$\varepsilon(x)\lambda=\psi\circ \pi(x)=\psi'\circ\pi'(x) = \varepsilon(x) \mu.$$
On the other hand
$$\theta(x) = \pi(x_{(1)}) \otimes \pi'(x_{(2)})=\varepsilon(x)\nu(\lambda) \otimes \nu'(\mu).
$$
Hence we have $\varepsilon(x)\not=0$ since $\theta$ is injective, and we get
$\lambda=\mu$. Also the identity $\theta(x)=\varepsilon(x)\nu(\lambda)\otimes \nu'(\lambda)$ 
and the injectivity of $\theta$ show that ${\rm Ind}_B^H(\lambda)$ has at most one 
$B'$-stable line, and the highest weight is $\lambda$. Finally two simple subcomodules of ${\rm Ind}_B^H(\lambda)$ have a $B'$-stable line, which is the unique $B'$-stable line
of ${\rm Ind}_B^H(\lambda)$, hence they coincide. Thus ${\rm Ind}_B^H(\lambda)$
contains a unique simple $H$-comodule whose highest $B'$-weight is necessarily $\lambda$.
\end{proof}

\begin{rem}
It follows from Proposition \ref{pw} that the $H$-comodule ${\rm Ind}_B^H(\lambda)$ is indecomposable.
In particular, if $H$ is cosemisimple, then ${\rm Ind}_B^H(\lambda)$ is a simple $H$-comodule.
\end{rem}

\begin{proposition}\label{dominant}
 Let $V$ be a simple $H$-comodule with highest $B'$-weight $\lambda \in \Lambda$.
 Then $V$ is isomorphic to a subcomodule of ${\rm Ind}_B^H(\lambda)$, and in particular
 ${\rm Ind}_B^H(\lambda) \not =\{0\}$.
\end{proposition}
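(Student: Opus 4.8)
The plan is to exhibit a nonzero morphism of $H$-comodules from $V$ into an induced comodule ${\rm Ind}_B^H(\mu)$ for a suitable $\mu \in \Lambda$, and then to identify $\mu$ with $\lambda$. Since $V$ is simple, any nonzero comodule map out of $V$ is automatically injective, so producing such a map realizes $V$ as a subcomodule of the induced comodule and, in particular, shows the latter is nonzero. The subtle point I anticipate is precisely the weight bookkeeping: the highest weight $\lambda$ of $V$ is measured through $\pi'$ and $B'$, whereas the induction is taken along $\pi$ from $B$, so the two live in a priori unrelated places. Rather than trying to land directly in the ``correct'' induced comodule, I would first embed $V$ into \emph{some} ${\rm Ind}_B^H(\mu)$ and let Propositions \ref{highest} and \ref{pw} reconcile the $B$- and $B'$-pictures and force $\mu = \lambda$.

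First I would produce the target weight. Restricting along $\pi$ gives a finite-dimensional $B$-comodule $\pi_* V$, and since $B$ is pointed it admits a $B$-flag; in particular it has a one-dimensional quotient comodule, necessarily of the form $k_{\nu(\mu)}$ for some $\mu \in \Lambda$ (using the isomorphism ${\rm Gr}(B) \simeq \Lambda$). Fixing a basis $v_1, \dots, v_n$ of $V$ with multiplicative matrix $x = (x_{ij})$, such a surjection is encoded by a nonzero functional $f = (f_1, \dots, f_n)$ satisfying $\sum_i f_i \pi(x_{ij}) = f_j\, \nu(\mu)$ for all $j$; that is, $f$ is a left eigen-covector of the matrix $\pi(x) \in M_n(B)$ with eigenvalue the group-like $\nu(\mu)$.

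Next I would build the morphism from this covector as a coefficient function: set $c_f = (f \otimes {\rm id}_H) \circ \alpha : V \to H$, so that $c_f(v_j) = \sum_i f_i x_{ij}$. A direct computation using coassociativity of $\alpha$ shows $c_f$ is a morphism of right $H$-comodules, and the identity $\varepsilon \circ c_f = f \neq 0$ shows $c_f \neq 0$. The key verification is that its image lies in ${\rm Ind}_B^H(\mu)$: applying $(\pi \otimes {\rm id}) \circ \Delta$ to $c_f(v_j)$, expanding $\Delta(x_{ij}) = \sum_k x_{ik} \otimes x_{kj}$, and using the eigen-relation $\sum_i f_i \pi(x_{ik}) = f_k\, \nu(\mu)$ yields $(\pi \otimes {\rm id})\Delta(c_f(v_j)) = \nu(\mu) \otimes c_f(v_j)$, which is exactly the defining membership condition for ${\rm Ind}_B^H(\nu(\mu))$. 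Hence $c_f$ is a nonzero, and therefore injective, morphism $V \hookrightarrow {\rm Ind}_B^H(\mu)$, and in particular ${\rm Ind}_B^H(\mu) \neq \{0\}$.

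Finally I would pin down the weight. By Proposition \ref{pw}, ${\rm Ind}_B^H(\mu)$ contains a unique simple subcomodule, whose highest $B'$-weight is $\mu$; since $V$ is a simple subcomodule of it, $V$ must be that unique one, so the highest $B'$-weight of $V$ equals $\mu$. On the other hand, by Proposition \ref{highest} the highest $B'$-weight of $V$ is well defined and equal to $\lambda$ by hypothesis, forcing $\mu = \lambda$. This yields $V \hookrightarrow {\rm Ind}_B^H(\lambda)$ together with ${\rm Ind}_B^H(\lambda) \neq \{0\}$, as desired. The genuine content is thus the coefficient-function construction and the image computation of the third paragraph; the weight identification is the short endgame that leverages the already-proved uniqueness statements.
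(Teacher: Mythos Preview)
Your proof is correct and is essentially the paper's argument in slightly more general packaging: the paper chooses a basis making $\pi(x)$ lower triangular, so that the first row $\pi(x_{1i}) = \delta_{1i}\nu(\mu)$ furnishes the specific eigen-covector $f=(1,0,\dots,0)$, and then the map $v_i \mapsto x_{1i}$ is exactly your $c_f$. The embedding into $\mathrm{Ind}_B^H(\mu)$ and the endgame $\mu=\lambda$ via Proposition~\ref{pw} are identical.
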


\begin{proof}
Since $B$ is pointed, there exists a basis $v_1, \ldots , v_n$ of $V$ such that
$$\alpha(v_i)=\sum_j v_j \otimes x_{ji}$$
and that the matrix $(\pi(x_{ij}))\in M_n(B)$ is lower triangular.
In particular $\pi(x_{1i}) =\delta_{1i}\nu(\mu)$ for some 
$\mu \in \Lambda$. Thus
$$(\pi \otimes {\rm id}_H)\circ \Delta(x_{1i})=\nu(\mu)\otimes x_{1i}$$
and hence $x_{1i} \in {\rm Ind}_B^H(\mu)$. The linear map
\[
\left\{
\begin{array}{ccc}
 V &\longrightarrow & H \\
v_i &\longmapsto & x_{1i}
\end{array}
\right.
\]
is $H$-colinear and injective since $V$ is simple, and its image
is contained in ${\rm Ind}_B^H(\mu)$. The previous proposition ensures that
$\lambda=\mu$, and we are done.
\end{proof}

We have now all the ingredients to prove Theorem \ref{main}, in the following form.

\begin{theorem}\label{remain}
 Let $H$ be a Hopf algebra having a dense big cell
 $(B,B',\Lambda)$, and let $\Lambda_+= \{\lambda \in \Lambda \ | \ {\rm Ind}_B^H(\lambda) \not = \{0\}\}$. Then the map
\[
\left\{
\begin{array}{ccc}
\Lambda_+ &\longrightarrow & {\rm Irr}(H) \\
\lambda &\longmapsto & [L(\lambda)]
\end{array}
\right.
\]
is a bijection, and $(H,B)$ is a Borel--Weil datum.
\end{theorem}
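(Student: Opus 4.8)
The plan is to assemble the three preceding propositions into the claimed bijection and then to check separately that $\Lambda_+$ is a submonoid. First I would verify that $\lambda \mapsto [L(\lambda)]$ is well defined: for $\lambda \in \Lambda_+$ we have ${\rm Ind}_B^H(\lambda) \neq \{0\}$, so Proposition \ref{pw} supplies a genuine simple $H$-comodule $L(\lambda)$ whose highest $B'$-weight is $\lambda$. Injectivity is then immediate: if $L(\lambda) \cong L(\mu)$, then since the highest $B'$-weight is an isomorphism invariant (the final remark in Proposition \ref{highest}) and equals $\lambda$, respectively $\mu$, by Proposition \ref{pw}, we conclude $\lambda = \mu$.

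For surjectivity I would start with an arbitrary simple $H$-comodule $V$. Proposition \ref{highest} attaches to it a highest $B'$-weight $\lambda \in \Lambda$, and Proposition \ref{dominant} realizes $V$ as a subcomodule of ${\rm Ind}_B^H(\lambda)$; in particular ${\rm Ind}_B^H(\lambda) \neq \{0\}$, so $\lambda \in \Lambda_+$. As ${\rm Ind}_B^H(\lambda)$ contains a unique simple subcomodule by Proposition \ref{pw}, and $V$ is such a subcomodule, we must have $V \cong L(\lambda)$, so $[V]$ lies in the image of the map.

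The step I expect to require the most care is showing that $\Lambda_+$ is a submonoid, because we cannot assume $H$ is an integral domain (compare Remark \ref{submonoid}). That $1 \in \Lambda_+$ is clear, the trivial comodule being induced from the unit. For closure, given $\lambda, \mu \in \Lambda_+$, I would take a generator $x$ of the $B'$-stable line of ${\rm Ind}_B^H(\lambda)$ and a generator $y$ of that of ${\rm Ind}_B^H(\mu)$, normalized so that $\varepsilon(x) = \varepsilon(y) = 1$; the proof of Proposition \ref{pw} shows $\theta(x) = \nu(\lambda) \otimes \nu'(\lambda)$ and $\theta(y) = \nu(\mu) \otimes \nu'(\mu)$. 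Since $(\pi \otimes {\rm id}_H) \circ \Delta$ is an algebra map, the computation of Remark \ref{submonoid} gives $xy \in {\rm Ind}_B^H(\lambda\mu)$. The crucial point, where the non-domain case is handled, is that $\theta$ is an algebra map, so that $\theta(xy) = \theta(x)\theta(y) = \nu(\lambda\mu) \otimes \nu'(\lambda\mu) \neq 0$; injectivity of $\theta$ then forces $xy \neq 0$, whence ${\rm Ind}_B^H(\lambda\mu) \neq \{0\}$ and $\lambda\mu \in \Lambda_+$.

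Finally I would record that $(H,B)$ is a Borel--Weil datum: axiom (1) holds because $B$ is pointed by hypothesis, axiom (2) is precisely Proposition \ref{pw} under the identification $\Lambda \simeq {\rm Gr}(B)$ afforded by $\nu$, and axiom (3) is the bijection together with the submonoid property just established. This also yields Theorem \ref{main}.
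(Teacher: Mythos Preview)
Your proposal is correct and follows essentially the same route as the paper: you assemble Propositions \ref{highest}, \ref{pw}, and \ref{dominant} exactly as the paper does to get the bijection, and your handling of the submonoid step is equivalent to the paper's. The only cosmetic difference is that for $xy \neq 0$ the paper simply notes $\varepsilon(xy) = \varepsilon(x)\varepsilon(y) \neq 0$ (which you have already secured by your normalization $\varepsilon(x)=\varepsilon(y)=1$), whereas you pass through $\theta(xy) = \nu(\lambda\mu) \otimes \nu'(\lambda\mu) \neq 0$ and the injectivity of $\theta$; both arguments are valid and rest on the same input from the proof of Proposition \ref{pw}.
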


\begin{proof}
The above map was constructed in Proposition \ref{pw}, and 
we have to construct an inverse. If $V$ is a simple $H$-comodule, its
highest $B'$-weight $\lambda_V\in \Lambda$ (Proposition \ref{highest}) is in fact, by Proposition \ref{dominant},
 an element of $\Lambda_+$.
This gives  a map
 \[
 \left\{
 \begin{array}{ccc}
 {\rm Irr}(H) & \longrightarrow & \Lambda_{+} \\
 V & \longmapsto & \lambda_V
\end{array}
\right. .
\]
By definition the comodule $L(\lambda_V)$ is the unique simple comodule
of ${\rm Ind}_B^H(\lambda_V)$. Proposition \ref{dominant} ensures that
$V$ is isomorphic to a simple subcomodule of ${\rm Ind}_B^H(\lambda_V)$, and thus 
$L(\lambda_V) \simeq V$.

Starting with $\lambda \in \Lambda_+$, Proposition \ref{pw} ensures 
that $\lambda$ is the highest $B'$-weight of $L(\lambda)$, and the bijectivity of the map in the statement of the theorem is thus established. 

It remains to check that
$\Lambda_+$ is a submonoid of $\Lambda$. Let $\lambda, \mu \in \Lambda_+$ and let
$x \in {\rm Ind}_B^H(\lambda)$ and $y \in {\rm Ind}_B^H(\mu)$ be such that
$kx$ and $ky$ are $B'$-stable lines. Then by the proof of Proposition \ref{pw}
we have $\varepsilon(x) \not =0$ and $\varepsilon(y) \not=0$. This shows
that $xy\not =0$ and the argument in Remark \ref{submonoid}
works to conclude that $\lambda\mu \in \Lambda_+$.
\end{proof}

Let us note the following characterization of dominant weights, which will be useful later.

\begin{proposition}
\label{prop:dominant-weights}
Let $\lambda \in \Lambda$. Then $\lambda \in \Lambda_+$ if and only if $\nu(\lambda) \otimes \nu'(\lambda) \in \mathrm{Im}(\theta)$. 
\end{proposition}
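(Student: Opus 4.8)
The plan is to establish the two implications separately. The forward implication will essentially repackage a computation already contained in the proof of Proposition~\ref{pw}, while the converse will exploit the fact that $\theta$ is a morphism of left $B$-comodules together with its injectivity.

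For the direct implication, I would start from $\lambda \in \Lambda_+$, so that ${\rm Ind}_B^H(\lambda) \neq \{0\}$. Proposition~\ref{pw} provides a $B'$-stable line $kx$ inside it, and the computation in that proof already gives $\theta(x) = \varepsilon(x)\, \nu(\lambda) \otimes \nu'(\lambda)$ together with $\varepsilon(x) \neq 0$. Rescaling, $\nu(\lambda) \otimes \nu'(\lambda) = \theta(\varepsilon(x)^{-1} x)$ lies in $\mathrm{Im}(\theta)$; this direction is therefore immediate.

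For the converse I would first record the key structural fact that $\theta$ intertwines the natural left $B$-coactions: equipping $H$ with the coaction $(\pi \otimes {\rm id}_H) \circ \Delta$ and $B \otimes B'$ with the comultiplication $\Delta_B$ of $B$ acting on the first tensorand, coassociativity together with the coalgebra property of $\pi$ yields
\[
(\Delta_B \otimes {\rm id}_{B'}) \circ \theta = ({\rm id}_B \otimes \theta) \circ (\pi \otimes {\rm id}_H) \circ \Delta,
\]
as both sides send $x$ to $\pi(x_{(1)}) \otimes \pi(x_{(2)}) \otimes \pi'(x_{(3)})$. Now given $x \in H$ with $\theta(x) = \nu(\lambda) \otimes \nu'(\lambda)$ (necessarily $x \neq 0$), I would apply this identity and use $\Delta_B(\nu(\lambda)) = \nu(\lambda) \otimes \nu(\lambda)$ to obtain
\[
({\rm id}_B \otimes \theta)\bigl( \pi(x_{(1)}) \otimes x_{(2)} \bigr) = \nu(\lambda) \otimes \nu(\lambda) \otimes \nu'(\lambda) = ({\rm id}_B \otimes \theta)\bigl( \nu(\lambda) \otimes x \bigr).
\]
Since $\theta$ is injective, so is ${\rm id}_B \otimes \theta$, and cancelling it gives $\pi(x_{(1)}) \otimes x_{(2)} = \nu(\lambda) \otimes x$, i.e.~$0 \neq x \in {\rm Ind}_B^H(\lambda)$, so $\lambda \in \Lambda_+$.

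I expect the only delicate point to be this last step. The hypothesis $\nu(\lambda) \otimes \nu'(\lambda) \in \mathrm{Im}(\theta)$ only gives the weaker identity $\pi(x_{(1)}) \otimes \pi'(x_{(2)}) = \nu(\lambda) \otimes \nu'(\lambda)$ in $B \otimes B'$, which does not by itself place $x$ in ${\rm Ind}_B^H(\lambda)$; promoting it to the genuine defining identity in $B \otimes H$ is exactly where the comodule-intertwining property of $\theta$ and its injectivity are used, and this is the heart of the argument.
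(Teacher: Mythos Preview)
Your proof is correct and follows essentially the same approach as the paper's: both directions are handled identically, with the converse established by applying $\mathrm{id}_B \otimes \theta$ to both sides of the desired equality $\pi(x_{(1)}) \otimes x_{(2)} = \nu(\lambda) \otimes x$ and verifying that each becomes $\nu(\lambda) \otimes \nu(\lambda) \otimes \nu'(\lambda)$. Your phrasing in terms of $\theta$ being a morphism of left $B$-comodules is a slightly more conceptual packaging of the same computation, but the underlying argument is the same.
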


\begin{proof}
The ``only if'' part was proved in the course of the proof of Proposition \ref{pw}. Now assume that there exists $x \in H$ such that $\theta(x)=\nu(\lambda) \otimes \nu'(\lambda)$. Then we claim that $x \in \mathrm{Ind}_B^H(\lambda)$, which will finish the proof. Indeed, we have to check that
\[
\pi(x_{(1)}) \otimes x_{(2)} = \nu(\lambda) \otimes x
\]
in $B \otimes H$. By injectivity of $\mathrm{id}_B \otimes \theta$, it is sufficient to check that
\[
\pi(x_{(1)}) \otimes \pi(x_{(2)}) \otimes \pi'(x_{(3)}) = \nu(\lambda) \otimes \pi(x_{(1)}) \otimes \pi'(x_{(2)}).
\]
However, by our assumption on $x$ both of these elements are equal to $\nu(\lambda) \otimes \nu(\lambda) \otimes \nu'(\lambda)$.
\end{proof}

\begin{rem}
\label{rem:new-big-cell}
Let $H$ be a Hopf algebra having a dense big cell $(B,B',\Lambda)$. Assume we are given some quotient Hopf algebras $C$ and $C'$ of $H$ such that
\begin{itemize} 
\item
$\pi$ (respectively $\pi'$) factors through the quotient $H \to C$ (respectively $H \to C'$);
\item
the induced morphism $C \to B$ (respectively $C' \to B'$) restricts to an isomorphism $\mathrm{Gr}(C) \xrightarrow{\sim} \mathrm{Gr}(B)$ (respectively $\mathrm{Gr}(C') \xrightarrow{\sim} \mathrm{Gr}(B')$).
\end{itemize} 
Then $(C,C',\Lambda)$ is also a dense big cell for $H$. Moreover, the dominant weights for both big cells coincide: indeed for any $\lambda \in \Lambda$ we have an inclusion $\mathrm{Ind}_{C}^G(\lambda) \subset \mathrm{Ind}_{B}^G(\lambda)$. Hence a dominant weight for $(C,C',\Lambda)$ is also dominant for $(B,B',\Lambda)$, and the associated simple $H$-comodules are the same. Now by Theorem \ref{remain} both sets parametrize isomorphism classes of simple $H$-comodules, hence they must coincide.
\end{rem}

\begin{rem}
 It is natural to wonder whether the spaces ${\rm Ind}_{B}^{H}(\lambda)$ always are finite-dimensional, as they are in the case of reductive groups. In fact this is not true at our level of generality: if $H$ is a Hopf algebra having a dense big cell $(B,B',\Lambda)$ and $U$ is a co-unipotent Hopf algebra (a pointed Hopf algebra with $\mathrm{Gr}(U)=\{1\}$), then $H \otimes U$ has a dense big cell $(B,B' \otimes U,\Lambda)$, the morphism $H \otimes U \to B$ being given by $\psi \otimes \varepsilon_U$.
We have
$U  \subset {\rm Ind}_{B}^{H \otimes U}(1)$, so the latter space is infinite-dimensional if $U$ is.
\end{rem}

\section{First example: $\mathcal O({\rm SL}_q(2))$}
\label{sec:SL2}

\subsection{Existence of a dense big cell}

As an example, let us use prove that the Hopf algebra $\mathcal O({\rm SL}_q(2))$ has a dense big cell (for $q \in k^\times$) with weights $\mathbb{Z}$ and dominant weights $\mathbb{N}$. Of course this is well known at least since \cite{pw}, but the particular proof will be useful later. The proof uses the following easy lemma.

\begin{lemma}\label{grading2}
 Let $V$, $W$ be vector spaces graded by a totally ordered set $S$:
$$V = \bigoplus_{s \in S}V^s, \quad W= \bigoplus_{s\in S} W^s.$$
For $s \in S$, denote by $p_s$ (resp. $q_s$) the canonical projection of $V$ (resp. $W$) on $V^s$ (resp. $W^s$).
Let $\theta : V \rightarrow W$ be a linear map. Assume that for any $s \in S$, we have $\theta(V^s) \subset W^{\leq s}$ (where $W^{\leq s}=\bigoplus_{t \leq s} W^t$) and that $q_s \circ \theta_{|V^s}$ is injective. Then $\theta$ is injective. If moreover $S$ has a smallest element $0$, then for any $v \in V$, if $\theta(v) \in W^0$, then $v \in V^0$.
\end{lemma}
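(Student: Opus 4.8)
The plan is to exploit the triangularity of $\theta$: by hypothesis it carries each graded piece $V^s$ into the ``lower'' part $W^{\leq s}$, while its ``diagonal'' component $q_s \circ \theta|_{V^s}$ is injective. This is exactly the setting in which one extracts information from the top-degree term of a vector, and both assertions should follow from a single leading-term computation.

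First I would prove injectivity. Take a nonzero $v \in V$ and decompose it as $v = \sum_{s} v_s$ with $v_s \in V^s$, only finitely many of them nonzero (this is the meaning of the direct sum). Since $S$ is totally ordered and the support $\{s : v_s \neq 0\}$ is finite and nonempty, it has a largest element $s_0$. I would then apply $q_{s_0}$ to $\theta(v) = \sum_s \theta(v_s)$. For each $s < s_0$ the hypothesis gives $\theta(v_s) \in W^{\leq s} = \bigoplus_{t \leq s} W^t$, and every such $t$ satisfies $t \leq s < s_0$, so $q_{s_0}$ annihilates $\theta(v_s)$. For $s = s_0$ we have $\theta(v_{s_0}) \in W^{\leq s_0}$, and $q_{s_0}$ extracts its $W^{s_0}$-component. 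Hence $q_{s_0}(\theta(v)) = (q_{s_0} \circ \theta|_{V^{s_0}})(v_{s_0})$, which is nonzero by injectivity of $q_{s_0} \circ \theta|_{V^{s_0}}$ together with $v_{s_0} \neq 0$. Therefore $\theta(v) \neq 0$, and $\theta$ is injective.

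For the second assertion, assume $0$ is the smallest element of $S$ and let $v$ satisfy $\theta(v) \in W^0$; I may assume $v \neq 0$. Writing $v = \sum_s v_s$ as above and letting $s_0$ be the largest index in the (finite, nonempty) support, the same computation yields $q_{s_0}(\theta(v)) = (q_{s_0} \circ \theta|_{V^{s_0}})(v_{s_0})$, which is nonzero since $v_{s_0} \neq 0$. On the other hand, if $s_0 \neq 0$ then $q_{s_0}$ annihilates $W^0 \ni \theta(v)$, forcing $q_{s_0}(\theta(v)) = 0$, a contradiction. Hence $s_0 = 0$, and since $0$ is the minimum of $S$ the support reduces to $\{0\}$, giving $v = v_0 \in V^0$.

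I do not anticipate any serious obstacle: this is the standard leading-term argument for triangular maps. The only point deserving a word of care is the existence of a maximal element $s_0$ in the support of $v$, which relies precisely on the support being finite (inherent in the direct-sum decomposition) and on $S$ being totally ordered; everything else is a direct application of the two hypotheses.
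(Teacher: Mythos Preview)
Your argument is correct and is essentially the same as the paper's: the paper simply states that a filtered morphism whose associated graded is injective is itself injective, and your leading-term computation is precisely the standard way to unpack that observation. The second claim is handled in the same spirit in both.
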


\begin{proof}
The first claim follows from the observation that a filtered morphism whose associated graded is injective is itself injective. The second claim can be proved similarly.
\end{proof}

Recall that $\mathcal O({\rm SL}_q(2))$ is the algebra presented by generators
$a,b,c,d$ and relations
$$ba = qab \ ; \ ca = qac \ ; \ db = qbd \ ; \ 
dc = qcd \ ; \ cb = bc = q(ad-1) \ ; da = qbc +1.$$
The coalgebra structure is determined by the condition that the matrix $\begin{pmatrix} a & b \\ c & d\end{pmatrix}$
is multiplicative. The Hopf algebra $\mathcal O(B_q)$ (resp. $\mathcal O(B_q')$) is the quotient of  $\mathcal O({\rm SL}_q(2))$
by the relation $b=0$ (resp. $c=0$). We denote by $\pi : \mathcal O({\rm SL}_q(2)) \rightarrow \mathcal O(B_q)$, $\pi' : \mathcal O({\rm SL}_q(2)) \rightarrow \mathcal O(B_q')$ the respective canonical projections and by
$\psi : \mathcal O(B_q) \rightarrow k[t,t^{-1}] \simeq k \mathbb Z$, $\psi' : \mathcal O(B_q') \rightarrow k[t,t^{-1}] \simeq k \mathbb Z$ the unique Hopf algebra maps such that $\psi(a)=\psi'(a)=t$, $\psi(c)=\psi'(b)=0$.

\begin{proposition}\label{bigsl2}
 The Hopf algebra $\mathcal O({\rm SL}_q(2))$ has $(\mathcal O(B_q), \mathcal O(B'_q), \mathbb Z)$ as a dense big cell, with  $\mathbb{N}$ being the set of dominant weights.
\end{proposition}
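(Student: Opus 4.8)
The plan is to verify the two axioms of a dense big cell for the data $(\mathcal O(B_q),\mathcal O(B'_q),\mathbb Z)$ and then to compute $\Lambda_+$. The ``soft'' axioms come essentially for free. The matrix $\begin{pmatrix} a & 0 \\ c & d\end{pmatrix}$, the image of the defining matrix under $\pi$, is lower triangular and multiplicative, so Proposition \ref{pointed} shows that $\mathcal O(B_q)$ is pointed with $\mathrm{Gr}(\mathcal O(B_q))$ generated by the group-likes $a,d$; since the relations force $ad=da=1$ in $\mathcal O(B_q)$ we get $d=a^{-1}$ and $\mathrm{Gr}(\mathcal O(B_q))=\langle a\rangle$, which $\psi$ identifies with $\mathbb Z$. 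The same argument, using the upper triangular matrix $\begin{pmatrix} a & b \\ 0 & d\end{pmatrix}$, applies to $\mathcal O(B'_q)$. All four maps are surjective, and $\psi\circ\pi$, $\psi'\circ\pi'$ agree on the generators (both send $a\mapsto t$, $b,c\mapsto 0$, $d\mapsto t^{-1}$), which gives the first axiom. So the real content is the injectivity of $\theta=(\pi\otimes\pi')\circ\Delta$.

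To prove injectivity I first record that $\theta(a)=a\otimes a$, $\theta(b)=a\otimes b$, $\theta(c)=c\otimes a$ and $\theta(d)=d\otimes d+c\otimes b$ in $W:=\mathcal O(B_q)\otimes\mathcal O(B'_q)$. The key input is a pair of gradings. On $\mathcal O(\mathrm{SL}_q(2))$ the assignment $\deg a=1$, $\deg d=-1$, $\deg b=\deg c=0$ respects every defining relation (the only constraints being $\deg(ad)=\deg(bc)=0$), so it defines an algebra $\mathbb Z$-grading; defining $\deg(a^mc^k\otimes a^nb^j)=m-j$ on $W$, one checks on generators that $\theta$ is homogeneous. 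As both spaces are thereby $\mathbb Z$-graded and $\theta$ is graded, it suffices to prove injectivity on each homogeneous component. Using the standard basis $\{a^ib^jc^k\}_{i,j,k\ge 0}\cup\{d^lb^jc^k\}_{l\ge 1,\,j,k\ge 0}$, the component of degree $s$ is spanned by $\{a^sb^jc^k\}$ when $s\ge 0$ and by $\{d^{-s}b^jc^k\}$ when $s<0$.

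For $s\ge 0$ this is immediate: $\theta(a^sb^jc^k)$ is a nonzero scalar multiple of the monomial $a^{s+j}c^k\otimes a^{s+k}b^j$, and distinct $(j,k)$ give distinct monomials. For $s=-l<0$ I will apply Lemma \ref{grading2} to the restriction of $\theta$, using the second grading $\deg^\flat(a^mc^k\otimes a^nb^j)=j+k$ on $W$ (total $b,c$-degree) together with the assignment $\deg^\flat(d^lb^jc^k)=2l+j+k$ on the source. The crucial observation is that the top $\deg^\flat$-term of $\theta(d)=d\otimes d+c\otimes b$ is $c\otimes b$, and since the product of two monomials of $W$ is a nonzero scalar times a monomial, the top term of $\theta(d^lb^jc^k)$ is exactly $(c\otimes b)^l(a\otimes b)^j(c\otimes a)^k$, a nonzero monomial of $\deg^\flat$ equal to $2l+j+k$; for fixed $l$ and fixed $j+k$ these top monomials (distinguished by their $c$-exponent $l+k$) are linearly independent. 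This supplies the hypotheses of Lemma \ref{grading2}, hence injectivity. I expect this to be the main obstacle: $\theta(d)$ is neither a monomial nor $\deg^\flat$-homogeneous, and a naive expansion of $\theta(d^l)$ involves Gaussian binomial coefficients in $q^2$ that vanish at roots of unity or in bad characteristic; passing to leading terms avoids them, since the top coefficient is $\binom{l}{l}_{q^2}=1$, and makes the argument uniform in $q\in k^\times$ and in $\mathrm{char}(k)$.

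Finally I compute $\Lambda_+$ via Proposition \ref{prop:dominant-weights}, noting that $\nu(\lambda)=a^\lambda$ and $\nu'(\lambda)=a^\lambda$. For $\lambda\ge 0$ we have $\theta(a^\lambda)=a^\lambda\otimes a^\lambda=\nu(\lambda)\otimes\nu'(\lambda)$, so $\mathbb N\subseteq\Lambda_+$. For $\lambda=-l<0$, the element $a^{-l}\otimes a^{-l}$ lies in the $\deg^\flat=0$ part of the degree-$(-l)$ component of $W$, whereas every basis vector $d^lb^jc^k$ of the corresponding source component has $\deg^\flat=2l+j+k\ge 2$, so the $\deg^\flat=0$ part of that source component is zero. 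The second assertion of Lemma \ref{grading2} then forces any preimage of $a^{-l}\otimes a^{-l}$ in this component to be zero, so $a^{-l}\otimes a^{-l}\notin\mathrm{Im}(\theta)$; here one uses that $\theta$ is graded for $\deg$, so a preimage of the $\deg$-homogeneous element $a^{-l}\otimes a^{-l}$ may be sought in the degree-$(-l)$ component. By Proposition \ref{prop:dominant-weights} this means $-l\notin\Lambda_+$, whence $\Lambda_+=\mathbb N$, as claimed.
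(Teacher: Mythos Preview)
Your argument is correct and shares its core with the paper's proof: both establish injectivity of $\theta$ via Lemma~\ref{grading2} applied to the filtration by total $b,c$-degree (your $\deg^\flat$), using that the leading term of $\theta(d)=d\otimes d+c\otimes b$ is $c\otimes b$, so that leading terms of basis elements are sent to distinct nonzero monomials. The paper runs this argument once, globally, with the single grading $\deg^\flat$ on both sides; you instead first slice $\mathcal O(\mathrm{SL}_q(2))$ and $W$ by the auxiliary $\mathbb Z$-grading $\deg$ (with $\deg a=1$, $\deg d=-1$, $\deg b=\deg c=0$), dispatch each nonnegative slice by a one-line monomial computation, and invoke $\deg^\flat$ and Lemma~\ref{grading2} only on the negative slices. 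For the dominant weights the paper's version is marginally slicker: since $a^m\otimes a^m$ lies in $W^0$ for $\deg^\flat$, the second clause of Lemma~\ref{grading2} applied globally forces any preimage into $\mathcal O(\mathrm{SL}_q(2))^0=k[a]$, hence $m\geq 0$, without the extra $\mathbb Z$-grading. Your additional grading is a harmless refinement; it buys a cleaner direct treatment of the $a^s b^j c^k$ basis vectors at the cost of a second case distinction, while the paper's single filtration handles both families uniformly.
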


%

\begin{proof}
Recall (see e.g.~\cite{ks}) that
$\mathcal O({\rm SL}_q(2))$
has a basis given by the elements of the form
\begin{equation}
\label{eqn:basis-SL2}
a^i b^j c^k, \ i,j,k \geq 0 \quad \text{and} \quad b^j c^k d^l, \ j,k \geq 0, \ l >0.
\end{equation}
For $n \geq 0$, we denote by $\mathcal O({\rm SL}_q(2))^n \subset \mathcal O({\rm SL}_q(2))$ the subspace generated by elements $a^i b^j c^k$ with $n=k+j$ and $b^j c^k d^l$ with $n=j+k+2l$. (We will say that such elements have ``degree'' $n$.) Then we have
\[
\mathcal O({\rm SL}_q(2)) = \bigoplus_{n \geq 0} \, \mathcal O({\rm SL}_q(2))^n.
\]
Similarly, $\mathcal{O}(B_q)$, respectively $\mathcal{O}(B_q')$, has a basis given by elements of the form
\begin{equation}
\label{eqn:basis-Bq}
a^i c^j, \ i \in \mathbb{Z}, \ j \geq 0, \quad \text{respectively} \quad a^k b^l, \ k \in \mathbb{Z}, \ l \geq 0,
\end{equation}
and for $n \geq 0$ we denote by $\bigl( \mathcal{O}(B_q) \otimes \mathcal{O}(B'_q) \bigr)^n$ the subspace generated by elements $a^i c^j \otimes a^{k} b^{l}$ with $n=j+l$. As above we have
\[
\mathcal{O}(B_q) \otimes \mathcal{O}(B'_q) = \bigoplus_{n \geq 0} \, \bigl( \mathcal{O}(B_q) \otimes \mathcal{O}(B'_q) \bigr)^n.
\]
It is a direct verification to check that
\[
\theta(\mathcal O({\rm SL}_q(2))^{n}) \subset \bigl( \mathcal{O}(B_q) \otimes \mathcal{O}(B'_q) \bigr)^{\leq n}
\]
for all $n \geq 0$. It is also not difficult to see that, in the notation of Lemma \ref{grading2}, for any $n$ the map $q_n\theta_{|\mathcal O({\rm SL}_q(2))^n}$ is injective; in fact it sends the basis of $\mathcal O({\rm SL}_q(2))^n$ obtained from \eqref{eqn:basis-SL2} to a family of elements consisting of non zero scalar multiples of distinct vectors in the basis of $\bigl( \mathcal{O}(B_q) \otimes \mathcal{O}(B'_q) \bigr)^{n}$ obtained from \eqref{eqn:basis-Bq}. We conclude by Lemma \ref{grading2} that $\theta$ is injective. 

Now let $a^m$, $m \in \mathbb Z$, be a dominant weight. By Proposition \ref{prop:dominant-weights}, there exists $x \in  \mathcal O({\rm SL}_q(2))$ such that $\theta(x)= a^m \otimes a^m$. Then by Lemma \ref{grading2} we have $x \in \mathcal O({\rm SL}_q(2))^0=k[a]$, and it follows that $m \geq 0$. 
\end{proof}

\subsection{Grothendieck ring}

For later use, in this subsection we briefly recall how one can describe the Grothendieck ring $K \bigl( \mathcal{O}(\mathrm{SL}_q(2)) \bigr)$ of $\mathcal{O}(\mathrm{SL}_q(2))$.

We have (re)proved in Proposition \ref{bigsl2} that $\mathcal{O}(\mathrm{SL}_q(2))$ has a dense big cell with dominant weights $\mathbb{N}$. Hence by Theorem \ref{remain} its simple comodules are parametrized by $\mathbb{N}$: we denote by $L(i)$ the simple comodule associated with $i$. Using e.g.~formal characters (see \cite[Theorem 8.2.1 and Proposition 8.8.1]{pw}), one can check the well-known fact that, in $K \bigl( \mathcal{O}(\mathrm{SL}_q(2)) \bigr)$, for any  $i,j \geq 0$ we have
\begin{equation}
\label{eqn:tensor-product-sl2}
[L(i)] \cdot [L(j)] = [L(i+j)] + \sum_{k=0}^{i+j-1} a_{ij}^k \cdot [L(k)]
\end{equation}
for some $a_{ij}^k \geq 0$.
Using an easy filtration argument we deduce the following.

\begin{lemma}
\label{lem:Grothendieck-SL2}
The ring morphism
\[
\mathbb{Z}[T] \to K \bigl( \mathcal{O}(\mathrm{SL}_q(2)) \bigr)
\]
sending $T$ to $[L(1)]$ is an isomorphism.
\end{lemma}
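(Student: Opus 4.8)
The plan is to show that the ring morphism $\varphi \colon \mathbb{Z}[T] \to K \bigl( \mathcal{O}(\mathrm{SL}_q(2)) \bigr)$ sending $T \mapsto [L(1)]$ is a bijection by analyzing what formula~\eqref{eqn:tensor-product-sl2} tells us about powers of $[L(1)]$. Since $K \bigl( \mathcal{O}(\mathrm{SL}_q(2)) \bigr)$ has the classes $\{[L(i)]\}_{i \geq 0}$ as a $\mathbb{Z}$-basis (by the convention on the Grothendieck ring recalled before Proposition~\ref{completeset}), it suffices to prove that the images $\{\varphi(T^n)\}_{n \geq 0} = \{[L(1)]^n\}_{n \geq 0}$ form another $\mathbb{Z}$-basis of the same free $\mathbb{Z}$-module. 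This is exactly the ``easy filtration argument'' alluded to in the text.

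First I would establish, by induction on $n$, that
\[
[L(1)]^n = [L(n)] + \sum_{k=0}^{n-1} b_k^{(n)} \cdot [L(k)]
\]
for some integers $b_k^{(n)} \geq 0$. The base cases $n=0$ and $n=1$ are immediate. For the inductive step, I would multiply the expression for $[L(1)]^{n-1}$ by $[L(1)]$ and apply \eqref{eqn:tensor-product-sl2} with $i=1$, $j=k$ to each term: the top term $[L(n-1)] \cdot [L(1)]$ contributes $[L(n)]$ plus lower terms, while every other product $[L(k)] \cdot [L(1)]$ with $k < n-1$ contributes only terms $[L(m)]$ with $m \leq k+1 \leq n-1$. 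Hence the coefficient of $[L(n)]$ is exactly $1$ and all remaining terms involve $[L(k)]$ with $k < n$, which is precisely the claimed shape.

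With this triangularity in hand, the conclusion is a standard change-of-basis observation. Writing the expansions of $[L(1)]^0, [L(1)]^1, \ldots, [L(1)]^N$ in the basis $[L(0)], \ldots, [L(N)]$ produces a lower (or upper, depending on ordering) unitriangular integer matrix, which is invertible over $\mathbb{Z}$. Therefore each $[L(n)]$ lies in the $\mathbb{Z}$-span of $\{[L(1)]^m\}_{m \leq n}$, so $\varphi$ is surjective; and the unitriangular matrix being invertible also shows that the $\{[L(1)]^n\}$ are $\mathbb{Z}$-linearly independent, so $\varphi$ is injective. As $\varphi$ is a ring homomorphism by construction, it is a ring isomorphism.

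The only genuine point requiring care is the inductive step, where one must verify that multiplying by $[L(1)]$ raises the top degree by exactly one and does not disturb the unit leading coefficient; this is where the shape of \eqref{eqn:tensor-product-sl2}---namely that $[L(i)] \cdot [L(j)]$ has leading term $[L(i+j)]$ with coefficient $1$ and all other terms strictly lower---is used essentially. Everything else is routine linear algebra over $\mathbb{Z}$, so I do not expect any real obstacle beyond bookkeeping of indices.
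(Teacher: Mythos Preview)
Your proposal is correct and is precisely the ``easy filtration argument'' the paper invokes: using \eqref{eqn:tensor-product-sl2} to show that $[L(1)]^n$ equals $[L(n)]$ plus lower terms, and then deducing the isomorphism from the resulting unitriangular change of basis. There is nothing to add.
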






 \section{Free products} 
We now study free products of Hopf algebras. 
We have not been able to show that the free product of Hopf algebras having a dense big cell still has a dense big cell, and 
we have just a very particular result in this direction (to be used later in the paper). This should not cause too much trouble since the simple comodules of a free product can be easily described in terms of the simple comodules of the factors.


\subsection{Simple comodules of free products of Hopf algebras}
 
 We begin by recalling some basic vocabulary and facts. Let $A,B$ be algebras, and let $A*B$ be their free product (coproduct in the category of unital algebras).
 For future convenience, we recall one possible construction of $A*B$ (see \cite{ns}). First, let us say that a subspace $X$ of an algebra $A$ is an \textbf{augmentation subspace of $A$} if $A=k1 \oplus X$. Now if $X=Z_1$ and $Y=Z_2$ are  augmentation subspaces of $A$ and $B$ respectively, we have $$A*B= k1 \oplus \left( \bigoplus_{m=1}^\infty \bigoplus_{i_1 \not = i_2 \not = \cdots \not = i_m} Z_{i_1} \otimes \cdots \otimes Z_{i_m}\right).$$
The right-handed term is denoted $X*Y$; this is an augmentation subspace of $A*B$. If $\{a_{i}, i \in I\}$, $\{b_j, \ j \in J\}$ denote respective bases of $X$ and $Y$, then the elements
\begin{equation}
\label{eqn:basis-free-product}
\begin{split}
a_{i_1} b_{j_1} \cdots a_{i_m}b_{j_m}a_{i_{m+1}}, & \ i_1, \ldots , i_{m+1} \in I, \  j_1, \ldots ,j_m \in J, \ m \geq 0 \\
b_{j_1} a_{i_1}  \cdots b_{j_m} a_{i_m}b_{j_{m+1}}, & \ i_1, \ldots , i_{m} \in I, \  j_1, \ldots , \ j_{m+1} \in J, m \geq 0\\
a_{i_1} b_{j_1} \cdots a_{i_m} b_{j_m}, & \ i_1, \ldots , i_m \in I, \  j_1, \ldots , j_m \in J, \ m \geq 1\\
b_{j_1} a_{i_1}\cdots b_{j_m}a_{i_m}, & \ i_1, \ldots , i_m \in I, \  j_1, \ldots , j_m \in J, \ m \geq 1
\end{split}
\end{equation}
form a basis of $X*Y$.

Let $H, L$ be Hopf algebras. Recall \cite{wa1} that the free product algebra $H*L$ has a unique Hopf algebra structure such that the canonical morphisms $H \longrightarrow H*L$ and $L \longrightarrow H*L$ are Hopf algebra maps. An $H*L$-comodule is said to be a simple alternated $H*L$-comodule if it has the form $V_1 \otimes \cdots \otimes V_n$,where each $V_i$ is a simple non-trivial $H$-comodule or $L$-comodule, and if $V_i$ is an $H$-comodule, then $V_{i+1}$ is an $L$-comodule, and conversely. It is proved in \cite[Theorem 3.10]{wa1} that if $H$ and $L$ are cosemisimple, then the simple $H*L$-comodules are exactly the simple alternated comodules.
We first note that the result generalizes as follows. We thank the referee for explaining us how to remove some technical assumption on $H$ and $L$ in the statement.

\begin{proposition}\label{simplefree}
 The simple comodules over the free product Hopf algebra $H*L$ are the simple alternated $H*L$-comodules. In particular, the natural ring morphism
 \[
 K(H) * K(L) \to K(H * L)
 \]
 between Grothendieck rings
 is an isomorphism.
\end{proposition}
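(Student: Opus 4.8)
The plan is to apply Proposition \ref{completeset} to the subset $\mathcal J \subset \mathrm{Irr}(H*L)$ consisting of the class of the trivial comodule together with the classes of the simple alternated comodules. For this I must first check that alternated comodules really are simple and pairwise non-isomorphic, and then verify the two axioms of that proposition.

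First I would show that an alternated comodule $V = V_1 \otimes \cdots \otimes V_n$ is simple. Writing $C_H$ for the span of the matrix coefficients of the nontrivial simple $H$-comodules (those not isomorphic to the trivial comodule), the coradical decomposes as $H_0 = k1 \oplus C_H$; since $1 \notin C_H$, one can enlarge $C_H$ to an augmentation subspace $X$ of $H$ possessing a basis that extends the matrix coefficients of the nontrivial simple $H$-comodules, and similarly one chooses $Y$ for $L$. With these choices the coefficients of $V$ are exactly the products $x^{(1)}_{i_1 j_1} \cdots x^{(n)}_{i_n j_n}$ of matrix coefficients of the factors $V_r$; because the $V_r$ alternate between $H$- and $L$-comodules, each such product is one of the alternating words recorded in \eqref{eqn:basis-free-product}, and distinct multi-indices give distinct words, hence linearly independent elements of $H*L$. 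Thus $\dim (H*L)(V) = \prod_r (\dim V_r)^2 = (\dim V)^2$, so $V$ is simple; the same bookkeeping shows that non-isomorphic alternated tuples span coefficient spaces with disjoint bases, so they determine distinct classes in $\mathrm{Irr}(H*L)$.

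It remains to verify the two hypotheses of Proposition \ref{completeset}. For the first (closedness under tensor product) I would induct on the total length of a pair of alternated comodules: if the last factor of one and the first factor of the other lie over different algebras, their tensor product is again alternated; otherwise I decompose the offending adjacent pair in the Grothendieck ring of $H$ (or of $L$), replacing it by a non-negative combination of simple comodules, and whenever a trivial factor appears it is deleted and the two newly adjacent factors are treated recursively. This expresses $[V] \cdot [W]$ as a non-negative integral combination of elements of $\mathcal J$. For the second hypothesis I must show that the coradical $(H*L)_0$ is contained in the subalgebra $A$ generated by the $(H*L)(U)$ with $[U] \in \mathcal J$; as this $A$ is precisely the subalgebra generated by $H_0$ and $L_0$, the claim is $(H*L)_0 \subseteq \langle H_0, L_0 \rangle$.

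I expect this last inclusion to be the main obstacle, and I would establish it as follows. For each alternating word the iterated multiplication map $H \otimes L \otimes H \otimes \cdots \to H*L$ is a morphism of coalgebras (because $\Delta$ is an algebra map), its image is a subcoalgebra, and the union of all these images is $H*L$; letting $F_N$ be the subcoalgebra sum of the images coming from words of length at most $N$, every simple subcoalgebra of $H*L$ is finite-dimensional and hence lies in some $F_N$, so that $(H*L)_0 \subseteq \sum_N (F_N)_0$. Over the algebraically closed field $k$ the coradical of a tensor product of coalgebras is the tensor product of the coradicals, so the coradical of each tensor factor $H \otimes L \otimes \cdots$ is $H_0 \otimes L_0 \otimes \cdots$; moreover the combined iterated-multiplication map from the direct sum of these tensor factors onto $F_N$ is a surjection of coalgebras, and a surjection of coalgebras carries the coradical of its source onto that of its target (the dual statement being that a nilpotent ideal of a finite-dimensional algebra meets a subalgebra in a nilpotent, hence radical, ideal). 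Consequently $(F_N)_0 \subseteq H_0 L_0 \cdots \subseteq \langle H_0, L_0 \rangle = A$, giving $(H*L)_0 \subseteq A$. With both hypotheses checked, Proposition \ref{completeset} yields $\mathcal J = \mathrm{Irr}(H*L)$, i.e.\ the simple comodules are exactly the alternated ones. Finally, the natural ring morphism $K(H) * K(L) \to K(H*L)$ carries the standard $\mathbb Z$-basis of the free product (the alternating words in the nontrivial simple classes, together with $1$) bijectively onto the basis of $K(H*L)$ given by the simple comodules; being a ring homomorphism that is bijective on bases, it is an isomorphism.
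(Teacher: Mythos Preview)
Your proof is correct and follows essentially the same route as the paper's: simplicity via linear independence of the alternating-word coefficients using suitable augmentation subspaces, then Proposition~\ref{completeset}, with the coradical inclusion $(H*L)_0 \subset \langle H_0, L_0\rangle$ obtained from the two facts that a coalgebra surjection $f:C\to D$ satisfies $D_0\subset f(C_0)$ and that the coradical of a tensor product lies in the tensor product of coradicals. The only cosmetic differences are that the paper uses a single coalgebra surjection from the full direct sum of alternating tensor powers onto $H*L$ (rather than your filtration by word length), cites Montgomery directly for the two coradical facts instead of your duality sketch, and leaves the verification of the first hypothesis of Proposition~\ref{completeset} implicit, whereas you spell out the induction.
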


\begin{proof}
To prove that the simple alternated comodules are indeed simple comodules, it suffices to prove that their coefficients are linearly independent. This easily follows from the description of bases of the free product at the beginning of the section, using the fact that one can choose the augmentation subspaces of $H$ and $L$ so that they contain the coefficients of all non-trivial simple comodules.
 Conversely, we will show that the family formed by the simple alternated comodules satisfies the assumptions in Proposition \ref{completeset}, which implies the expected result. The first assumption in Proposition \ref{completeset} is clearly satisfied, and to check the second one, we just have to show that 
$(H*L)_0$ is contained in the subalgebra generated by $H_0$ and $L_0$. Since $H*L$ is generated as an algebra by $H$ and $L$, multiplication induces a coalgebra surjection
$$\bigoplus_{k\geq 0} \bigoplus_{n_i,m_i\geq 0} H^{\otimes n_1} \otimes L^{\otimes m_1} \otimes \cdots \otimes H^{n_k} \otimes L^{m_k} \rightarrow H*L$$ 
We conclude by combining the following two facts: (a) for a coalgebra surjection $f :C \rightarrow D$ we have $D_0\subset f(C_0)$ by \cite[Corollary 5.3.5]{mo}, and (b) the coradical of  a tensor product of coalgebras is contained in the tensor product of coradicals by \cite[Lemma 5.1.10]{mo}. 
  The last assertion is an obvious consequence of the first one.
\end{proof}

\begin{corollary}
  Let $H, L$ be Hopf algebras having the Borel--Weil property, with respective weight groups $\Lambda$ and $\Gamma$. Then there is a bijection between the set of isomorphism classes of simple comodules over  $H*L$ and the submonoid of  $\Lambda*\Gamma$ generated by $\Lambda_+$ and $\Gamma_+$. 
\end{corollary}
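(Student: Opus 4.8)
The plan is to assemble three facts: the parametrization of $\Irr(H)$ and $\Irr(L)$ by dominant weights coming from the Borel--Weil property, the description of the simple $H*L$-comodules as simple alternated comodules (Proposition \ref{simplefree}), and the normal form for elements of the free product of groups $\Lambda * \Gamma$.

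First I would fix notation: write $L_H(\lambda)$ ($\lambda \in \Lambda_+$) and $L_L(\gamma)$ ($\gamma \in \Gamma_+$) for the simple $H$- and $L$-comodules, so that by the Borel--Weil property the maps $\lambda \mapsto [L_H(\lambda)]$ and $\gamma \mapsto [L_L(\gamma)]$ are bijections onto $\Irr(H)$ and $\Irr(L)$. I would observe that the trivial comodule corresponds to the identity weight: since $k \subset \mathrm{Ind}_B^H(1)$ is a simple subcomodule and $\mathrm{Ind}_B^H(1)$ contains a unique simple subcomodule, we get $L_H(1) = k$, and likewise $L_L(1) = k$. Consequently the non-trivial simple $H$- (respectively $L$-) comodules are indexed precisely by $\Lambda_+ \setminus \{1\}$ (respectively $\Gamma_+ \setminus \{1\}$).

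Next I would invoke Proposition \ref{simplefree}. As recalled at the beginning of this section, choosing the augmentation subspaces of $K(H)$ and $K(L)$ to be the spans of the non-trivial simple classes (the identity $1$ being the class of the trivial comodule), the free product ring $K(H) * K(L)$ has a $\mathbb{Z}$-basis consisting of the alternating monomials whose letters are non-trivial simple classes of alternating type. Under the isomorphism $K(H)*K(L) \xrightarrow{\sim} K(H*L)$ of Proposition \ref{simplefree} these map to the classes of the simple alternated comodules, which therefore form a basis of $K(H*L)$. In other words $\Irr(H*L)$ is in bijection with the set $W$ of alternating words $\ell_1 \cdots \ell_n$ ($n \geq 0$) whose letters lie alternately in $\Lambda_+ \setminus \{1\}$ and $\Gamma_+ \setminus \{1\}$, the empty word corresponding to the trivial comodule.

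Finally I would identify $W$ with the submonoid $M \subset \Lambda * \Gamma$ generated by $\Lambda_+$ and $\Gamma_+$, via the map sending a word to the product of its letters. This map lands in $M$; it is surjective onto $M$ because any product of elements of $\Lambda_+ \cup \Gamma_+$ can be brought into the form of a word in $W$ by merging consecutive factors coming from the same group (legitimate since $\Lambda_+$ and $\Gamma_+$ are submonoids of $\Lambda$ and $\Gamma$) and deleting the identities; and it is injective by the uniqueness of the normal form of an element of the free product $\Lambda * \Gamma$. Composing with the bijection of the previous paragraph yields the desired bijection $\Irr(H*L) \simeq M$. The only non-formal input here is this last identification, which is the classical normal form theorem for free products of groups; everything else is bookkeeping on top of Proposition \ref{simplefree} and the Borel--Weil property.
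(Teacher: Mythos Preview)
Your argument is correct and follows the same route as the paper, which dispatches the corollary in a single sentence by invoking Proposition \ref{simplefree} together with an ``obvious bijection'' between the simple alternated comodules and the submonoid of $\Lambda * \Gamma$ generated by $\Lambda_+$ and $\Gamma_+$. You have simply unpacked that obvious bijection (identifying the trivial comodule with the neutral element, and using the normal form for free products of groups), which is entirely in line with the paper's intent.
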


\begin{proof}
 There is an obvious bijection between the set of isomorphism classes of simple alternated comodules and the submonoid of $\Lambda*\Gamma$ generated by $\Lambda_+$ and $\Gamma_+$, so the result follows from Proposition \ref{simplefree}.
\end{proof}

\subsection{The case of $\mathcal O({\rm SL}_q(2))*k\Gamma$}

The notations in this subsection and the next one are those introduced in Section \ref{sec:SL2}.

\begin{proposition}\label{freesl2}
 Let $\Gamma$ be a discrete group. Then the Hopf algebra $\mathcal O({\rm SL}_q(2))*k\Gamma$ has $$(\mathcal O(B_q)*k\Gamma, \mathcal O(B'_q)*k\Gamma, \mathbb Z*\Gamma)$$ as a dense big cell. Moreover $(\mathbb Z*\Gamma)_+$
is the submonoid generated by $t$ and $\Gamma$, and every simple $\mathcal O({\rm SL}_q(2))*k\Gamma$-comodule is a simple alternated comodule.
\end{proposition}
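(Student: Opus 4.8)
The plan is to construct the dense big cell for $\mathcal{O}(\mathrm{SL}_q(2))*k\Gamma$ out of the dense big cell $(\mathcal{O}(B_q),\mathcal{O}(B'_q),\mathbb{Z})$ from Proposition~\ref{bigsl2} and the trivial datum $(k\Gamma,k\Gamma,\Gamma)$ for the pointed Hopf algebra $k\Gamma$. First I would form the free products of Hopf algebra maps $\pi * \mathrm{id}\colon \mathcal{O}(\mathrm{SL}_q(2))*k\Gamma \to \mathcal{O}(B_q)*k\Gamma$ and $\pi' * \mathrm{id}$, together with $\psi * \mathrm{id}\colon \mathcal{O}(B_q)*k\Gamma \to k\mathbb{Z}*k\Gamma \cong k(\mathbb{Z}*\Gamma)$ and $\psi' * \mathrm{id}$, using that the group algebra functor turns the free product of groups into the free product of Hopf algebras. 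All four maps are surjective, being free products of surjections. Pointedness of $\mathcal{O}(B_q)*k\Gamma$ and $\mathcal{O}(B'_q)*k\Gamma$, and the identification $\mathrm{Gr}(\mathcal{O}(B_q)*k\Gamma)\cong \mathbb{Z}*\Gamma$, follow from Proposition~\ref{simplefree}: the simple comodules of a free product are the alternated ones, so a free product of pointed Hopf algebras is pointed, with group of group-likes the free product of the groups of group-likes. The compatibility $\psi\pi = \psi'\pi'$ and the fact that $\psi * \mathrm{id}$, $\psi' * \mathrm{id}$ induce isomorphisms on group-likes are then immediate from the corresponding statements for $\mathcal{O}(\mathrm{SL}_q(2))$. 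This reduces the first assertion to the injectivity of the algebra map $\theta\colon \mathcal{O}(\mathrm{SL}_q(2))*k\Gamma \to (\mathcal{O}(B_q)*k\Gamma)\otimes(\mathcal{O}(B'_q)*k\Gamma)$.

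The hard part will be this injectivity, and the plan is to adapt the graded argument of Proposition~\ref{bigsl2} through Lemma~\ref{grading2}. The decisive structural facts are that $\theta$ is an algebra map whose restriction to the subalgebra $\mathcal{O}(\mathrm{SL}_q(2))$ is the injective map of Proposition~\ref{bigsl2}, while $\theta(g)=g\otimes g$ for $g\in\Gamma$. On the target I would use the algebra grading defined on $\mathcal{O}(B_q)*k\Gamma$ by the total $c$-degree and on $\mathcal{O}(B'_q)*k\Gamma$ by the total $b$-degree, with all of $\Gamma$ placed in degree $0$; these really are algebra gradings because the defining relations of $\mathcal{O}(B_q)$ and $\mathcal{O}(B'_q)$ are homogeneous in $c$, respectively $b$. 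On the source I would take the vector space grading attached to the free product basis \eqref{eqn:basis-free-product}, built from the basis \eqref{eqn:basis-SL2} of $\mathcal{O}(\mathrm{SL}_q(2))$ and the basis $\{g : g\neq 1\}$ of $k\Gamma$, by assigning to an alternating word the sum of the degrees (in the sense of Proposition~\ref{bigsl2}) of its $\mathcal{O}(\mathrm{SL}_q(2))$-letters, the $\Gamma$-letters counting for $0$.

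Since the target grading is multiplicative and each $g\otimes g$ is homogeneous of degree $0$, the filtration estimate $\theta(\text{degree }n)\subset(\text{degree}\leq n)$ follows from the corresponding estimate for the $\mathcal{O}(\mathrm{SL}_q(2))$-letters. For the associated graded I would observe that, for an alternating basis word $w=h_1 g_1 h_2 \cdots$, the top-degree component of $\theta(w)$ is the product of the top components of the elements $\theta(h_i)$ interleaved with the degree-$0$ factors $g_i\otimes g_i$. By Proposition~\ref{bigsl2} each such top component is a nonzero scalar times a single basis element $U_i\otimes V_i$ of $\mathcal{O}(B_q)\otimes\mathcal{O}(B'_q)$, with $U_i$ and $V_i$ both different from $1$ because $h_i\neq 1$; hence the top component of $\theta(w)$ is a nonzero scalar times $(U_1 g_1 U_2\cdots)\otimes(V_1 g_1 V_2\cdots)$, which by \eqref{eqn:basis-free-product} is a single basis element of the target. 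The key technical point is then to recover $w$ from this basis element: uniqueness of the alternating normal form lets one read off the sequences $(U_i)$, $(V_i)$ and $(g_i)$, and the injectivity of $h\mapsto\mathrm{top}(\theta(h))$ from Proposition~\ref{bigsl2} recovers each $h_i$ from the pair $(U_i,V_i)$. Thus distinct words give distinct basis elements, the associated graded map is injective, and Lemma~\ref{grading2} yields injectivity of $\theta$; this finishes the proof that $(\mathcal{O}(B_q)*k\Gamma,\mathcal{O}(B'_q)*k\Gamma,\mathbb{Z}*\Gamma)$ is a dense big cell.

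Finally, for the last two assertions I would apply Proposition~\ref{simplefree} directly to $\mathcal{O}(\mathrm{SL}_q(2))*k\Gamma$, which gives that its simple comodules are exactly the simple alternated comodules. Such a comodule $V_1\otimes\cdots\otimes V_n$, with the $V_i$ non-trivial simple comodules of $\mathcal{O}(\mathrm{SL}_q(2))$ or $k\Gamma$ alternately, has highest $B'$-weight the product in $\mathbb{Z}*\Gamma$ of the highest $B'$-weights of its factors (the $B'$-stable line of a tensor product being the tensor product of the $B'$-stable lines). These products are exactly the reduced alternating words in the powers $t^a$ ($a\geq 1$) and the elements $\gamma\neq 1$, that is, the elements of the submonoid of $\mathbb{Z}*\Gamma$ generated by $t$ and $\Gamma$. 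Since by Theorem~\ref{remain} the highest $B'$-weights of the simple comodules are precisely the elements of $(\mathbb{Z}*\Gamma)_+$, this identifies $(\mathbb{Z}*\Gamma)_+$ with that submonoid and completes the proof.
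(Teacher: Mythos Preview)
Your proposal is correct and follows essentially the same approach as the paper's proof: form the free products of the structural maps, establish injectivity of $\theta$ by the filtered/graded argument of Lemma~\ref{grading2} with the grading by total $b$- and $c$-degree (the paper likewise stresses that the target grading is an \emph{algebra} grading), and then invoke Proposition~\ref{simplefree} to identify the dominant weights via the highest $B'$-weights of the simple alternated comodules. Your write-up is more explicit than the paper's about why the top component of $\theta(w)$ is a single basis element of the target and why the word $w$ can be recovered from it (in particular the observation that for $h\neq 1$ in the augmentation subspace of $\mathcal O(\mathrm{SL}_q(2))$ the top component of $\theta(h)$ has both tensor factors $\neq 1$, so the resulting alternating words are in normal form), but this is exactly the content the paper summarizes as ``easy to check''.
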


\begin{proof}
 The structural Hopf algebra maps are
 \begin{gather*}
\pi*{\rm id} : \mathcal O({\rm SL}_q(2))*k\Gamma \rightarrow \mathcal O(B_q)*k\Gamma, \ 
\pi'*{\rm id} : \mathcal O({\rm SL}_q(2))*k\Gamma \rightarrow \mathcal O(B_q')*k\Gamma\\
\psi*{\rm id} : \mathcal O(B_q)*k\Gamma \rightarrow k[t,t^{-1}]*k\Gamma \simeq k \mathbb Z*\Gamma, \ \psi'*{\rm id} : \mathcal O(B_q')*k\Gamma \rightarrow k[t,t^{-1}]*k\Gamma \simeq k \mathbb Z*\Gamma
\end{gather*}
where $\pi$, $\pi'$, $\psi$, $\psi'$ have been defined before Proposition \ref{bigsl2}, and $*$ is the free product of algebra maps. 

We have to prove that the corresponding algebra map $\theta$ is injective. For this we use the same strategy as for Proposition \ref{bigsl2}. Starting with the basis 
\[
a^i b^j c^k, \ i,j,k \geq 0, \ i+j+k >0, \qquad b^j c^k d^l, \ j,k \geq 0, \ l>0
\]
of an augmentation subspace of $\mathcal O({\rm SL}_q(2))$ and the basis $\Gamma \smallsetminus \{1\}$ of an augmentation subspace of $k\Gamma$ we obtain a basis of $\mathcal O({\rm SL}_q(2))*k\Gamma$ as in \eqref{eqn:basis-free-product}. If $n>0$, we denote by $\bigl( \mathcal O({\rm SL}_q(2))*k\Gamma \bigr)^n$ the subspace generated by such elements where the sum of the ``degrees'' of the elements in $\mathcal O({\rm SL}_q(2))$ (in the sense of the proof of Proposition \ref{bigsl2}) is $n$. We also define $\bigl( \mathcal O({\rm SL}_q(2))*k\Gamma \bigr)^0$ as the subspace generated by such elements where all elements of $\mathcal O({\rm SL}_q(2))$ have ``degree'' $0$. (Note that this includes the unit of $\mathcal O({\rm SL}_q(2))*k\Gamma$.)

One can make similar definitions for $\bigl( \mathcal{O}(B_q)*k\Gamma \bigr) \otimes \bigl( \mathcal{O}(B_q')*k\Gamma \bigr)$ (using again the ``degrees'' considered in the proof of Proposition \ref{bigsl2}). Then it is easy to check (using the fact that the grading on $\bigl( \mathcal{O}(B_q)*k\Gamma \bigr) \otimes \bigl( \mathcal{O}(B_q')*k\Gamma \bigr)$ is an \emph{algebra} grading) that we have
\[
\theta \Bigl( \bigl( \mathcal O({\rm SL}_q(2))*k\Gamma \bigr)^n \Bigr) \subset \Bigl( \bigl( \mathcal{O}(B_q)*k\Gamma \bigr) \otimes \bigl( \mathcal{O}(B_q')*k\Gamma \bigr) \Bigr)^{\leq n}
\]
for any $n \geq 0$, and that (in the notation of Lemma \ref{grading2}) for any $n$ the map $q_n \theta_{|( \mathcal O({\rm SL}_q(2))*k\Gamma)^n}$ is injective. We deduce the injectivity of $\theta$ using Lemma \ref{grading2}.

By Proposition \ref{simplefree}, every simple $\mathcal O({\rm SL}_q(2))*k\Gamma$-comodule is a simple alternated comodule, whose highest weight clearly belongs to the submonoid generated by $a$ ($=\nu(t)$) and $\Gamma$.
Conversely, it is obvious that an element of this submonoid is the highest weight of a unique simple alternated comodule. 
\end{proof}


\subsection{Grothendieck ring}
\label{ss:Groth-ring-free-product}

Combining Lemma \ref{lem:Grothendieck-SL2} and Proposition \ref{simplefree} we obtain the following description of the Grothendieck ring $K \bigl( \mathcal{O}(\mathrm{SL}_q(2)) * k \Gamma \bigr)$ of the tensor category of finite dimensional $ \mathcal{O}(\mathrm{SL}_q(2)) * k \Gamma$-comodules.

\begin{lemma}
\label{lem:Grothendieck-free-product}
The ring morphism
\[
\varphi : \mathbb{Z}[T] * \mathbb{Z}\Gamma \to K \bigl( \mathcal{O}(\mathrm{SL}_q(2)) * k \Gamma \bigr)
\]
sending $T$ to $[L(1)]$ and $\gamma \in \Gamma$ to $[L(\gamma)]$ is an isomorphism.
\end{lemma}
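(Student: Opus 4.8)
The plan is to combine the two structural isomorphisms established earlier. By Lemma \ref{lem:Grothendieck-SL2}, the map $\mathbb{Z}[T] \to K(\mathcal{O}(\mathrm{SL}_q(2)))$ sending $T$ to $[L(1)]$ is an isomorphism, and the group algebra $\mathbb{Z}\Gamma$ is manifestly isomorphic to $K(k\Gamma)$ (since $k\Gamma$ is pointed, its simple comodules are precisely the group-likes $k_\gamma$, $\gamma \in \Gamma$, and these tensor according to multiplication in $\Gamma$). First I would invoke Proposition \ref{simplefree}, which gives a ring isomorphism
\[
K(\mathcal{O}(\mathrm{SL}_q(2))) * K(k\Gamma) \xrightarrow{\sim} K(\mathcal{O}(\mathrm{SL}_q(2)) * k\Gamma).
\]

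Next I would observe that the free product construction is functorial: given ring isomorphisms $\mathbb{Z}[T] \xrightarrow{\sim} K(\mathcal{O}(\mathrm{SL}_q(2)))$ and $\mathbb{Z}\Gamma \xrightarrow{\sim} K(k\Gamma)$, these assemble into an isomorphism of free products
\[
\mathbb{Z}[T] * \mathbb{Z}\Gamma \xrightarrow{\sim} K(\mathcal{O}(\mathrm{SL}_q(2))) * K(k\Gamma).
\]
Composing this with the isomorphism from Proposition \ref{simplefree} yields the desired map $\varphi$, and I would check that the composite does indeed send $T$ to $[L(1)]$ and each $\gamma \in \Gamma$ to $[L(\gamma)]$, which follows by tracking the generators through the two isomorphisms. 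Since a composition of ring isomorphisms is a ring isomorphism, this establishes the claim.

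I do not expect a serious obstacle here, as the statement is essentially a formal consequence of the two previously established results and the functoriality of the free product of rings. The only point requiring a small verification is the identification $\mathbb{Z}\Gamma \cong K(k\Gamma)$ and the fact that $[L(\gamma)]$ for the free product corresponds to the image of $\gamma$; this is immediate from the description of simple comodules over a pointed Hopf algebra given in Section 1, together with the fact (noted in Proposition \ref{freesl2}) that every simple $\mathcal{O}(\mathrm{SL}_q(2)) * k\Gamma$-comodule is a simple alternated comodule whose class corresponds under $\varphi$ to the appropriate alternating word in $T$ and the $\gamma$'s. Thus the proof reduces to assembling the two structural isomorphisms and checking compatibility on generators.
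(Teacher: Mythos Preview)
Your proposal is correct and follows exactly the approach the paper indicates: the paper simply states that the lemma is obtained by combining Lemma~\ref{lem:Grothendieck-SL2} and Proposition~\ref{simplefree}, and you have spelled out precisely this combination (together with the trivial identification $\mathbb{Z}\Gamma \cong K(k\Gamma)$). There is nothing to add.
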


\section{Hopf subalgebras}
In this short section, we discuss Hopf subalgebras.  The following preliminary result is due to the referee; it enabled us to remove a superfluous assumption in Proposition \ref{sub}.

\begin{lemma}
 Let $C \subset D$ be a subcoalgebra of a pointed coalgebra, and let $f : D \rightarrow D_0$ be a coalgebra map with $f_{|D_0}= {\rm id}_{D_0}$. Then we have $f(C) \subset C_0$. 
\end{lemma}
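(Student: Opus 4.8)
The plan is to reduce the statement to a question about group-like elements and then run an induction along the coradical filtration. Since $C$ is a subcoalgebra of the pointed coalgebra $D$, it is itself pointed, with $C_0 = \mathrm{span}(\Gr(D) \cap C)$; thus proving $f(C) \subseteq C_0$ amounts to showing that every group-like element occurring in some $f(c)$ (with $c \in C$) actually lies in $C$.

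The first step is to use $f$ to put extra structure on $D$. Because $f$ is a coalgebra map fixing $D_0$ pointwise, the maps $\rho = (f \otimes \mathrm{id}) \circ \Delta$ and $\sigma = (\mathrm{id} \otimes f) \circ \Delta$ make $D$ a $D_0$-bicomodule; since $D_0 = \bigoplus_{g \in \Gr(D)} kg$ is the grouplike coalgebra, this is the same as a $\Gr(D) \times \Gr(D)$-grading $D = \bigoplus_{g,h} {}^g D^h$, where ${}^g D^h = \{x : \rho(x) = g \otimes x, \ \sigma(x) = x \otimes h\}$. Because $\Delta(C) \subseteq C \otimes C$ (and, using the coalgebra filtration property of the coradical filtration, $\Delta(C_n) \subseteq (C_n \otimes D) \cap (D \otimes C_n)$), both $C$ and each $C_n$ are graded subspaces. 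A short computation comparing $f \otimes f$ applied to $\Delta(x)$ in the two possible orders shows that, for $x \in {}^gD^h$, one has $f(x) = 0$ when $g \neq h$ and $f(x) = \varepsilon(x)\,g$ when $g = h$; hence $f(c) = \sum_g \varepsilon(c^{(g,g)})\, g$, where $c^{(g,g)}$ denotes the bidegree-$(g,g)$ component. This reduces everything to the claim $(\ast)$: if $g \in \Gr(D)$ and there exists $y \in {}^gC^g := C \cap {}^gD^g$ with $\varepsilon(y) \neq 0$, then $g \in C$.

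I would prove $(\ast)$ by induction on the least $n$ with $y \in C_n$, after rescaling so that $\varepsilon(y) = 1$. For $n = 0$ the element $y$ lies in $C_0 = \mathrm{span}(\Gr(D) \cap C)$ and has pure bidegree $(g,g)$, which forces $y \in kg$ and hence $g \in C$. For the inductive step, assume $g \notin C$. Using $C_n = C_{n-1} \wedge C_0$ we get $\Delta(y) \in C_{n-1} \otimes C + C \otimes C_0$; applying $\mathrm{id} \otimes f$ and recalling $\sigma(y) = y \otimes g$ (as $y$ has right-degree $g$) yields $y \otimes g \in C_{n-1} \otimes f(C) + C \otimes C_0$. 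All second tensor factors now lie in $D_0$, so I can apply $\mathrm{id} \otimes \xi_g$, where $\xi_g$ is the functional on $D_0$ reading off the coefficient of $g$. Since $g \notin C$ we have $\xi_g(C_0) = 0$, so the right-hand side collapses to $C_{n-1}$, giving $y \in C_{n-1}$. The induction hypothesis then forces $g \in C$, contradicting $g \notin C$, so $(\ast)$ holds.

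The main obstacle is precisely this inductive step: a naive contraction of $\Delta(y)$ leaves a ``cross term'' of the shape $f(\text{something in } C)$ that one does not control, and this is what makes single-contraction (for instance $\mathrm{id} \otimes \varepsilon$) arguments circular. The device that resolves it is to apply $\mathrm{id} \otimes \xi_g$ rather than $\mathrm{id} \otimes \varepsilon$: projecting the second factor onto the coefficient of the \emph{specific} group-like $g$ annihilates the $C \otimes C_0$ term under the assumption $g \notin C$, and deposits the remaining term into the lower filtration piece $C_{n-1}$ where induction applies. The bigrading induced by $f$ is the other essential ingredient, since it simultaneously pins down the shape $f(c) = \sum_g \varepsilon(c^{(g,g)})\, g$ and guarantees that each $C_n$ is graded, which is what lets the induction interact cleanly with the group-like decomposition.
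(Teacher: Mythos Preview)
Your proof is correct and follows a genuinely different path from the paper's. The paper argues by direct induction on the coradical filtration, proving $f(C_n)\subset C_0$: at the inductive step it invokes the Taft--Wilson theorem to write $c\in C_n$ as $\sum_{g,h}c_{g,h}$ with $\Delta(c_{g,h})=c_{g,h}\otimes g+h\otimes c_{g,h}+w$ for some $w\in C_{n-1}\otimes C_{n-1}$, applies $f\otimes f$, and reads off $f(c_{g,h})\in C_0$ from the resulting identity in $D_0\otimes D_0$. Your argument instead equips $D$ with the $\Gr(D)$-bigrading coming from the $D_0$-bicomodule structure induced by $f$, derives the closed formula $f(c)=\sum_g\varepsilon(c^{(g,g)})\,g$, and reduces everything to the pointwise claim $(\ast)$, which you then settle by descent along the coradical filtration using only the wedge description $C_n=C_{n-1}\wedge C_0$ together with the projection $\xi_g$. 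The paper's route is shorter and leans on a standard structural theorem; yours is more self-contained (it avoids Taft--Wilson) and produces the explicit shape of $f_{|C}$ as a byproduct, at the cost of a slightly longer setup.
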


\begin{proof}
The coalgebra $C$ is also pointed, with $C_n=D_n\cap C$ for any $n \geq 0$, where $C_n$, $D_n$ denote the $n$th part 
of the respective coradical filtrations (see \cite[Lemma 5.2.12]{mo}). We show that $f(C_n) \subset C_0$ by induction on $n\geq 0$, the case $n=0$ being obvious.
Let $n\geq 1$ and assume that $f(C_{n-1}) \subset C_0$. Let $c \in C_n$. By the Taft--Wilson Theorem (see \cite[Theorem 5.4.1]{mo}), we can write
\[
c= \sum_{g,h \in {\rm Gr}(C)} c_{g,h} \quad \text{with} \quad \Delta(c_{g,h}) = c_{g,h}\otimes g + h \otimes c_{g,h} + w
\]
for some $w=w_{g,h} \in C_{n-1}\otimes C_{n-1}$.
For any $g,h \in {\rm Gr}(C)$, we have, using the induction assumption,  
\[
\Delta f(c_{g,h})=(f\otimes f)\Delta(c_{g,h})=
f(c_{g,h})\otimes g + h \otimes f(c_{g,h}) + (f \otimes f)(w)\in D_0 \otimes C_0 + C_0 \otimes D_0.
\]
Since any element of $D_0$ is a linear combination of group-like elements, this implies that $f(c_{g,h}) \in C_0$. It follows that $f(c) \in C_0$.
 \end{proof}

\begin{proposition}\label{sub}
 Let $H$ be a Hopf algebra having a dense big cell $(B,B',\Lambda)$ with corresponding Hopf algebra maps 
\[
\xymatrix{
H \ar[r]^-{\pi} & B \ar[r]^-{\psi} & k\Lambda
}
\quad \text{and} \quad
\xymatrix{
H \ar[r]^-{\pi'} & B' \ar[r]^-{\psi'} & k\Lambda.
}
\]
Let $\nu : \Lambda \longrightarrow {\rm Gr}(B)$ and 
$\nu' : \Lambda \longrightarrow {\rm Gr}(B')$
be as in Section \ref{sec:proof}. Let $A \subset H$ be a Hopf subalgebra. Then 
we have $\psi ({\rm Gr}(\pi(A)))=\psi'({\rm Gr}(\pi'(A))=: \Lambda_A$, and 
 $(\pi(A), \pi'(A), \Lambda_A)$ is a dense big cell for $A$. Moreover we have $(\Lambda_A)_+ \subset \Lambda_+ \cap \Lambda_A$, with equality if the following conditions are fulfilled:
\begin{enumerate}
 \item  $A\subset H$ is a normal Hopf subalgebra;
\item $H$ is faithfully flat as a left (or right) $A$-module;
\item  ${\rm Ker}(\pi) \subset A^+H$.
\end{enumerate}
\end{proposition}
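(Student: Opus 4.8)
The plan is to treat the three assertions of Proposition~\ref{sub} in turn, reducing all but the last to formal manipulation of the definitions together with Proposition~\ref{prop:dominant-weights}, and to isolate the single nontrivial external input (a coinvariant description of $A$) needed for the final equality. First I would record the setup. Since $A$ is a Hopf subalgebra, $\pi(A) \subseteq B$ and $\pi'(A) \subseteq B'$ are Hopf subalgebras, hence pointed. Writing $\Phi := \psi \circ \pi = \psi' \circ \pi'$, the image $\Phi(A) = \psi(\pi(A)) = \psi'(\pi'(A))$ is a sub-Hopf-algebra of the group algebra $k\Lambda$, hence is the group algebra $k\Lambda_A$ of its grouplikes $\Lambda_A := \mathrm{Gr}(\Phi(A))$. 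Using the fact that a surjective Hopf algebra map $f \colon C \to D$ out of a pointed $C$ satisfies $\mathrm{Gr}(D) = f(\mathrm{Gr}(C))$ (which follows from $D_0 \subseteq f(C_0)$, see \cite[Cor.~5.3.5]{mo}, and linear independence of grouplikes), applied to $\psi$ on $\pi(A)$ and to $\psi'$ on $\pi'(A)$, I would obtain $\psi(\mathrm{Gr}(\pi(A))) = \Lambda_A = \psi'(\mathrm{Gr}(\pi'(A)))$, which is the first assertion; moreover $\psi$ and $\psi'$ restrict to isomorphisms $\mathrm{Gr}(\pi(A)) \xrightarrow{\sim} \Lambda_A$ and $\mathrm{Gr}(\pi'(A)) \xrightarrow{\sim} \Lambda_A$, being restrictions of the injective maps $\mathrm{Gr}(B) \xrightarrow{\sim} \Lambda$ and $\mathrm{Gr}(B') \xrightarrow{\sim} \Lambda$.

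Next I would check that $(\pi(A),\pi'(A),\Lambda_A)$ is a dense big cell for $A$, the structural maps being the restrictions of $\pi,\pi',\psi,\psi'$. Axiom~(1) is immediate from the previous step and from restricting $\psi\pi = \psi'\pi'$ to $A$. For axiom~(2), the map $\theta_A = (\pi|_A \otimes \pi'|_A) \circ \Delta|_A$ is exactly the restriction of $\theta$ to $A$ (with values in $\pi(A) \otimes \pi'(A) \subseteq B \otimes B'$), so its injectivity is inherited from that of $\theta$; this also yields $\mathrm{Im}(\theta_A) = \theta(A) \subseteq \mathrm{Im}(\theta)$. For the inclusion $(\Lambda_A)_+ \subseteq \Lambda_+ \cap \Lambda_A$, I would observe that, $\psi$ being injective on grouplikes, the weight maps agree, $\nu_A = \nu|_{\Lambda_A}$ and $\nu'_A = \nu'|_{\Lambda_A}$. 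Hence for $\lambda \in \Lambda_A$, Proposition~\ref{prop:dominant-weights} applied to $A$ gives $\lambda \in (\Lambda_A)_+$ iff $\nu(\lambda) \otimes \nu'(\lambda) \in \mathrm{Im}(\theta_A)$; since $\mathrm{Im}(\theta_A) \subseteq \mathrm{Im}(\theta)$, the same proposition applied to $H$ forces $\lambda \in \Lambda_+$, and the inclusion follows.

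The substantial point is the reverse inclusion $\Lambda_+ \cap \Lambda_A \subseteq (\Lambda_A)_+$ under hypotheses (i)--(iii). Under (i), $A^+H = HA^+$ is a Hopf ideal, so $\overline{H} := H/A^+H$ is a quotient Hopf algebra with projection $p \colon H \to \overline{H}$; under (ii), the structure theory of faithfully flat exact sequences of Hopf algebras provides the coinvariant description $A = \{h \in H : (p \otimes \mathrm{id}) \circ \Delta(h) = \overline{1} \otimes h\}$ (the other side of flatness giving the symmetric right-handed statement). Hypothesis~(iii) reads $\mathrm{Ker}(\pi) \subseteq A^+H = \mathrm{Ker}(p)$, so $p$ factors as $p = \rho \circ \pi$ for a Hopf algebra map $\rho \colon B \to \overline{H}$. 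Now, given $\lambda \in \Lambda_+ \cap \Lambda_A$, I would choose a nonzero $x \in \mathrm{Ind}_B^H(\lambda)$ (nonempty since $\lambda \in \Lambda_+$), so that $\pi(x_{(1)}) \otimes x_{(2)} = \nu(\lambda) \otimes x$. Because $\lambda \in \Lambda_A$, we have $\nu(\lambda) = \pi(a)$ for some $a \in A$ with $\varepsilon(a) = 1$, whence $\rho(\nu(\lambda)) = p(a) = \overline{1}$. Applying $\rho \otimes \mathrm{id}$ to this identity gives $(p \otimes \mathrm{id}) \circ \Delta(x) = \overline{1} \otimes x$, so $x \in A$ by the coinvariant description. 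Then $x$ is a nonzero element of $\mathrm{Ind}_{\pi(A)}^A(\lambda)$, proving $\lambda \in (\Lambda_A)_+$ and hence the desired equality.

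I expect the only real obstacle to be the coinvariant identity $A = \{h : (p \otimes \mathrm{id}) \Delta(h) = \overline{1} \otimes h\}$: this is precisely where hypotheses (i) and (ii) enter, and it rests on the (nontrivial) theory of normal Hopf subalgebras and faithfully flat quotients rather than on anything developed earlier in the paper. Everything else is a formal manipulation of the definitions and of Proposition~\ref{prop:dominant-weights}; in particular, hypothesis (iii) is used only through the elementary factorization $p = \rho \circ \pi$.
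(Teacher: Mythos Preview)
Your proof is correct and follows essentially the same route as the paper for the substantial part. The one genuine difference is in establishing $\psi(\mathrm{Gr}(\pi(A))) = \psi'(\mathrm{Gr}(\pi'(A)))$: you argue directly that $\Phi(A)$ is a group subalgebra of $k\Lambda$ and then use the fact $D_0 \subseteq f(C_0)$ for a coalgebra surjection $f$ out of a pointed $C$, whereas the paper first proves a preparatory lemma (via the Taft--Wilson theorem) showing that any coalgebra retraction $D \to D_0$ of a pointed coalgebra sends a subcoalgebra $C \subseteq D$ into $C_0$, and applies it to $\nu\psi$ to get $\nu\psi(\pi(A)) \subseteq \pi(A)$. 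Both arguments are short and standard; yours is arguably more direct for the statement at hand, while the paper's lemma yields the slightly stronger conclusion that the retraction $\nu\psi$ preserves $\pi(A)$. For the easy inclusion $(\Lambda_A)_+ \subseteq \Lambda_+$ you invoke Proposition~\ref{prop:dominant-weights}, while the paper just notes $\mathrm{Ind}_{\pi(A)}^A(\lambda) \subseteq \mathrm{Ind}_B^H(\lambda)$; the latter is marginally simpler. The reverse inclusion under hypotheses (1)--(3) is argued identically in both proofs: factor $p$ through $\pi$ via (3), use the coinvariant description $A = {}^{\mathrm{co}\,p}H$ coming from (1) and (2) (the paper cites Schneider \cite{sch} here), and conclude that the nonzero $x \in \mathrm{Ind}_B^H(\lambda)$ already lies in $A$.
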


\begin{proof}
The previous lemma, applied to the inclusion $\pi(A) \subset B$ and the morphism $f=\nu\psi$, ensures that
$\nu\psi\pi(A) \subset \pi(A)$, and hence that $\psi(\mathrm{Gr}(\pi(A))) = \Lambda \cap \psi\pi(A)$. Similarly we have $\psi'(\mathrm{Gr}(\pi'(A))) = \Lambda \cap \psi'\pi'(A)$. Hence $\psi ({\rm Gr}(\pi(A)))=\psi'({\rm Gr}(\pi'(A))=: \Lambda_A$.
Then it is easy to check that
 $(\pi(A), \pi'(A), \Lambda_A)$ is a dense big cell for $A$. It is also clear that $(\Lambda_A)_+ \subset \Lambda_+ \cap \Lambda_A$, since $\mathrm{Ind}_{\pi(A)}^A(\lambda) \subset \mathrm{Ind}_{B}^H(\lambda)$ for any $\lambda \in \Lambda_A$.
 
 Now we assume that conditions $(1)$, $(2)$, $(3)$ are fulfilled, and let $\lambda \in \Lambda_+ \cap \Lambda_A$. Then ${\rm Ind}_B^H(\lambda)\not=\{0\}$ and $\lambda=\psi\pi(a)$ for some $a$ in $A$ with $\pi(a)$ group-like. Hence there exists $ x \in H$, $x \not=0$, with 
$$\pi(x_{(1)})\otimes x_{(2)} = \nu(\lambda)  \otimes x=\nu\psi\pi(a) \otimes x= \pi(a) \otimes x.$$
 By the first assumption we can form the quotient Hopf algebra $H/A^+H$, and we denote by $p : H \rightarrow H/A^+H$ the quotient map. By the third assumption there exists a unique Hopf algebra map $\overline{p} : B \rightarrow H/A^+H$ such that $\overline{p} \circ \pi =p$. We have
$\overline{p}\pi(x_{(1)})\otimes x_{(2)} = \overline{p} \pi(a) \otimes x$, hence 
\begin{equation}
\label{eqn:sub}
p(x_{(1)}) \otimes x_{(2)} = p(a)\otimes x.
\end{equation}
By definition $p(a)=\varepsilon(a) = \varepsilon\pi(a)=1$, hence $x \in {^{{\rm co}p}H}$.
But assumptions (1) and (2) yield that $A=H^{{\rm co}p}={^{{\rm co}p}H}$ (\cite{sch}), so \eqref{eqn:sub} implies that $x \in A$. Finally we obtain that $x \in {\rm Ind}_{\pi(A)}^A(\lambda)\not=\{0\}$, hence $\lambda \in (\Lambda_A)_+$.
\end{proof}

The proposition can be used, for example, to show that $\mathcal O({\rm PGL}_q(n))$ has a dense big cell, and to get its dominant weights from those of $\mathcal O({\rm GL}_q(n))$.

\begin{rem}
Consider the setting of Proposition \ref{sub}, and let $\lambda \in (\Lambda_A)_+$. We have seen that there is a natural inclusion of $H$-comodules $\mathrm{Ind}_{\pi(A)}^A(\lambda) \subset \mathrm{Ind}_{B}^H(\lambda)$. In particular, the simple $H$-comodule associated with $\lambda$ is the restriction of the simple $A$-comodule associated with $\lambda$.
\end{rem}

\section{Example: universal cosovereign Hopf algebras}

In this section we study the example of universal cosovereign Hopf algebras.
This will provide a natural example of a Hopf algebra having a dense big cell
with weight group $\mathbb F_2$ (the free group on two generators) and 
hence a Borel--Weil datum with weight group $\mathbb F_2$.

\subsection{Existence of a dense big cell}
\label{ss:cosovereign-big-cell}

Let $F \in {\rm GL}(n,k)$.
Following \cite{bi1} we consider the algebra $H(F)$
generated by
 $(u_{ij})_{1 \leq i,j \leq n}$ and
 $(v_{ij})_{1 \leq i,j \leq n}$, with relations:
$$ {u} {v^t} = { v^t} u = I_n ; \quad {vF} {u^t} F^{-1} = 
{F} {u^t} F^{-1}v = I_n,$$
where $u= (u_{ij})$, $v = (v_{ij})$ and $I_n$ is
the identity $n \times n$ matrix. The algebra
$H(F)$ has a  Hopf algebra structure
defined by
\begin{gather*}
\Delta(u_{ij}) = \sum_k u_{ik} \otimes u_{kj}, \quad
\Delta(v_{ij}) = \sum_k v_{ik} \otimes v_{kj}, \\
\varepsilon (u_{ij}) = \varepsilon (v_{ij}) = \delta_{ij}, \quad 
S(u) = {v^t}, \quad S(v) = F { u^t} F^{-1}.
\end{gather*}

The universal property of the Hopf algebras $H(F)$ \cite{bi1} shows that they
play, in the category of Hopf algebras,
a role that is similar to the one of $\mathcal O({\rm GL}(n,k))$  
in the category of commutative Hopf algebras: in particular any 
finitely generated Hopf algebra having all its finite-dimensional comodules
isomorphic to their bidual is a quotient of $H(F)$ for some $F$.
Hence one might say that they correspond to ``universal'' quantum groups.

When $k=\mathbb C$ and $F$ is a positive matrix, the Hopf algebra 
$H(F)$ corresponds to  Van Daele and Wang's universal compact quantum groups
\cite{vdw}. In this case the simple comodules have been classified by Banica \cite{ba}; they are
naturally labelled by the free monoid on two generators $\mathbb N * \mathbb N$.
More generally, if $k$ has characteristic zero, the matrices $F$ for which
$H(F)$ is cosemisimple have been determined in \cite{bi2}, and it is shown there 
that the classification of simple comodules given in \cite{ba} remains valid.

We use the techniques of the previous sections 
to classify the simple $H(F)$-comodules when 
${\rm tr}(F) \not= 0$ and $ {\rm tr} (F^{-1})\not=0$ or ${\rm tr}(F)=0={\rm tr} (F^{-1})$, removing
any assumption on $k$ and the genericity assumption in \cite{bi2}.
We show that the simple $H(F)$-comodules are still  labelled by the free monoid on two generators $\mathbb N * \mathbb N$. This was previously shown by Chirvasitu \cite{chi}, but the techniques used in the present paper provide explicit models for the simple comodules.

So let $F \in {\rm GL}(n,k)$ with $n \geq 2$,
${\rm tr}(F) \not= 0$ and  $ {\rm tr} (F^{-1})\not=0$ or ${\rm tr}(F)=0={\rm tr} (F^{-1})$. Such a matrix is said to be normalizable.
Let $q \in k^\times$ be such that
$q^2 -\sqrt{{\rm tr}(F){\rm tr}(F^{-1})}q +1 = 0$. Put 
$$H(q) = H\left( \Bigl( \begin{smallmatrix} q^{-1} & 0 \\
                          0 & q \\
       \end{smallmatrix} \Bigr) \right).$$
It is shown in \cite{bi2} that
the tensor categories of $H(F)$-comodules and $H(q)$-comodules are equivalent.
Hence we concentrate on the Hopf algebras $H(q)$.

We denote by $B(q)$ (resp. $B'(q)$) the quotient of $H(q)$ by the relations $u_{12}=0=v_{21}$ (resp. $u_{21}=0=v_{12}$) and by 
$$\pi : H(q) \rightarrow B(q), \quad \pi' : H(q) \rightarrow  B'(q)$$ the respective canonical projections. The algebras $B(q)$ and $B'(q)$ have a unique Hopf algebra structure such that $\pi$ and $\pi'$ are Hopf algebra maps. Moreover $B(q)$ and $B'(q)$ are pointed by Proposition \ref{pointed}.   
Let $\mathbb F_2$ be the free group on two generators $u_1$ and $u_2$. There are (surjective) Hopf algebra morphisms 
$$\psi : B(q) \rightarrow k\mathbb F_2, \quad \psi' : B'(q) \rightarrow k\mathbb F_2$$
such that $\psi(u_{ij})=\delta_{ij}u_i=\psi'(u_{ij})$ and $\psi(v_{ij})=\delta_{ij}u_{i}^{-1}=\psi'(v_{ij})$.
It is easy to see that $\psi$ and $\psi'$ induce isomorphisms ${\rm Gr}(B(q)) \simeq \mathbb F_2 \simeq {\rm Gr}(B'(q))$, see Proposition \ref{pointed}.

\begin{theorem}\label{bigfree}
 The Hopf algebra $H(q)$ has $(B(q), B'(q), \mathbb F_2)$ as a dense big cell, and $(\mathbb F_2)_+$ is the submonoid of $\mathbb F_2$ generated by $\alpha = u_1$ and $\beta=u_2^{-1}$. In particular there exists an explicit bijection ${\rm Irr}(H(q)) \simeq \mathbb N * \mathbb N$.
\end{theorem}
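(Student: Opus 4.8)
The plan is to verify the two axioms of a dense big cell for the triple $(B(q), B'(q), \mathbb F_2)$ and then to identify the dominant weights, following the strategy already used for $\mathcal O(\mathrm{SL}_q(2))$ in Proposition \ref{bigsl2}. The first axiom is essentially built into the construction: the equality $\psi \circ \pi = \psi' \circ \pi'$ holds because both composites send $u_{ij}$ to $\delta_{ij} u_i$ and $v_{ij}$ to $\delta_{ij} u_i^{-1}$, and the fact that $\psi$, $\psi'$ induce isomorphisms $\mathrm{Gr}(B(q)) \simeq \mathbb F_2 \simeq \mathrm{Gr}(B'(q))$ is recorded just before the statement (via Proposition \ref{pointed}). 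So the real content is the second axiom: the injectivity of the algebra map $\theta = (\pi \otimes \pi') \circ \Delta : H(q) \to B(q) \otimes B'(q)$.

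For injectivity I would imitate the graded argument of Lemma \ref{grading2} and Proposition \ref{bigsl2}. First I would fix an explicit $k$-basis of $H(q)$; since $H(q)$ is a free-product-like object built from the relations $u v^t = v^t u = I_2$ and the sovereign relations, one expects a PBW-type basis consisting of ordered monomials in the entries $u_{ij}, v_{ij}$ (with the diagonal entries $u_{11},u_{22},v_{11},v_{22}$ playing the role of invertible ``torus'' directions and the off-diagonal entries $u_{12},u_{21},v_{12},v_{21}$ the ``nilpotent'' directions). I would then define a grading ``degree'' on $H(q)$ by counting off-diagonal generators, and a compatible grading on $B(q) \otimes B'(q)$, arranged so that $\theta$ is filtered: $\theta(H(q)^n) \subset (B(q) \otimes B'(q))^{\leq n}$. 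The crucial computation is that the top-degree component $q_n \circ \theta|_{H(q)^n}$ is injective, i.e.~that $\theta$ sends the chosen basis of $H(q)^n$ to nonzero scalar multiples of distinct basis vectors of $(B(q)\otimes B'(q))^n$; here the key point is that in $B(q)$ the generator $u_{12}$ (and $v_{21}$) dies while $u_{21}, v_{12}$ survive, and dually in $B'(q)$, so that $\theta$ can reconstruct each off-diagonal factor from exactly one of the two tensor legs. Lemma \ref{grading2} then yields injectivity of $\theta$.

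For the dominant weights, I would apply Proposition \ref{prop:dominant-weights}: $\lambda \in (\mathbb F_2)_+$ precisely when $\nu(\lambda) \otimes \nu'(\lambda) \in \mathrm{Im}(\theta)$. Writing $\alpha = u_1$ and $\beta = u_2^{-1}$, one checks that $u_{11} \in \mathrm{Ind}_{B(q)}^{H(q)}(\alpha)$ and that $v_{22}$ (or the appropriate entry) lies in $\mathrm{Ind}_{B(q)}^{H(q)}(\beta)$, so $\alpha$ and $\beta$ are dominant; since $(\mathbb F_2)_+$ is a submonoid (Remark \ref{submonoid} or Theorem \ref{remain}), the submonoid $\langle \alpha, \beta \rangle$ is contained in $(\mathbb F_2)_+$. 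For the reverse inclusion I would use the ``smallest element'' clause of Lemma \ref{grading2}: an element $x$ with $\theta(x) = \nu(\lambda)\otimes\nu'(\lambda)$ landing in the degree-zero part forces $x$ to lie in the degree-zero subalgebra (the one generated by the diagonal entries), which is spanned by words in $u_1, u_2$ whose weights are exactly $\langle \alpha, \beta\rangle$; this rules out any $\lambda$ outside the submonoid. The submonoid generated by two free generators of $\mathbb F_2$ is the free monoid $\mathbb N * \mathbb N$, giving the final bijection $\mathrm{Irr}(H(q)) \simeq \mathbb N * \mathbb N$ via Theorem \ref{remain}.

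The main obstacle I anticipate is the explicit bookkeeping in the injectivity step: choosing a genuinely correct PBW basis of $H(q)$ and a degree function for which $\theta$ is both filtered and injective on associated graded. The relations defining $H(q)$ are less transparent than the $\mathrm{SL}_q(2)$ relations, and one must be careful that the sovereign relations $v F u^t F^{-1} = F u^t F^{-1} v = I_2$ do not introduce lower-degree corrections that spoil the claim that distinct top-degree basis monomials map to distinct basis vectors downstairs. Verifying that the diagonal $F = \mathrm{diag}(q^{-1}, q)$ makes these relations behave like the $\mathcal O(\mathrm{SL}_q(2))$ computation—so that the same filtration argument applies essentially verbatim—is where the real work lies.
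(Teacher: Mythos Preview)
Your approach is quite different from the paper's. The paper does not attempt a direct PBW computation inside $H(q)$ at all: it uses the Hopf algebra embedding $\iota : H(q) \hookrightarrow \mathcal O(\mathrm{SL}_q(2)) * k[z,z^{-1}]$ from \cite{bi2}, then invokes Proposition~\ref{freesl2} (dense big cell for the free product) together with Proposition~\ref{sub} (dense big cell restricts to Hopf subalgebras) and Remark~\ref{rem:new-big-cell} to replace $\pi(\iota(H(q)))$ by $B(q)$. The dominant weights are then read off from those of $\mathcal O(\mathrm{SL}_q(2)) * k[z,z^{-1}]$ via the induced embedding of weight groups $\iota' : \mathbb F_2 \hookrightarrow \mathbb Z * \mathbb Z$ and a short word-combinatorics argument. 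All the hard computation was already done in Proposition~\ref{bigsl2}.

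Your direct filtration strategy is reasonable in spirit but the grading you describe does not work. You put all four diagonal generators $u_{11},u_{22},v_{11},v_{22}$ in degree $0$, but then $\theta$ is not filtered: since $\Delta(u_{22}) = u_{21}\otimes u_{12} + u_{22}\otimes u_{22}$ and neither factor dies under $\pi\otimes\pi'$, we get $\theta(u_{22}) = u_{21}\otimes u_{12} + u_{22}\otimes u_{22}$, which has a degree-$2$ component. The $\mathrm{SL}_q(2)$ proof you are imitating already handles this subtlety by assigning $d$ degree~$2$, not~$0$; the correct analogue here is to give $u_{22}$ and $v_{11}$ degree~$2$ while keeping $u_{11},v_{22}$ in degree~$0$. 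This correction also repairs your dominant-weights step, which as written is wrong: with all diagonals in degree~$0$ the degree-zero subalgebra carries weight $u_2 = \beta^{-1}$ (from $u_{22}$), so it certainly does not have weights only in $\langle\alpha,\beta\rangle$. With the corrected grading the degree-zero part is generated by $u_{11}$ and $v_{22}$, whose weights are precisely $\alpha$ and $\beta$, and the argument goes through --- provided you can produce a PBW basis of $H(q)$ compatible with this grading, which is exactly the nontrivial bookkeeping the paper's indirect route avoids.
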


\begin{proof}
It is shown in \cite{bi2}, Section 3, that there exists a Hopf algebra embedding
\[
\iota : H(q) \hookrightarrow \mathcal O({\rm SL}_q(2))*k[z,z^{-1}]
\]
such that
\[
\iota
\begin{pmatrix}
 u_{11} & u_{12} \\
u_{21} & u_{22}
\end{pmatrix} = \begin{pmatrix}
 za & zb \\
zc & zd
\end{pmatrix}, \quad
\iota \begin{pmatrix}
 v_{11} & v_{12} \\
v_{21} & v_{22}
\end{pmatrix} =
\begin{pmatrix}
 dz^{-1} & -q^{-1}cz^{-1} \\
-qbz^{-1} & az^{-1}
\end{pmatrix}.
\]
Hence one can deduce from Propositions \ref{freesl2} and \ref{sub} that the Hopf algebra $H(q)$ has a dense big cell $(\pi(H(q)), \pi'(H(q)), \Lambda)$ where $\pi,\pi'$ are the morphisms which define the dense big cell of $\mathcal O({\rm SL}_q(2))*k[z,z^{-1}]$ as in Proposition \ref{freesl2}, and $\Lambda$ is the subgroup of $\mathbb{Z} * \mathbb{Z}$ (with generators $t$ and $z$, where $t$ is as in Section \ref{sec:SL2}) generated by $zt$ and $zt^{-1}$. Now we observe that the composition $\psi \circ \iota$, respectively $\psi' \circ \iota$, factors through the morphism
\[
\eta :B(q) \longrightarrow \mathcal O(B_q)*k[z,z^{-1}], \quad \text{respectively} \quad \eta' :B'(q) \longrightarrow \mathcal O(B'_q)*k[z,z^{-1}]
\]
defined by
\[
\eta \begin{pmatrix}
u_{11} & 0 \\
u_{21} & u_{22}
\end{pmatrix} = \begin{pmatrix}
za & 0 \\
zc & za^{-1}
\end{pmatrix}, \quad \text{respectively} \quad
\eta' \begin{pmatrix}
u_{11} & u_{12} \\
0 & u_{22}
\end{pmatrix} = \begin{pmatrix}
za & zb \\
0 & za^{-1}
\end{pmatrix}.
\]
Hence, by Remark \ref{rem:new-big-cell}, $(B(q), B'(q), \mathbb F_2)$ is indeed a dense big cell for $H(q)$. 

It remains to determine the dominant weights. We observe that $u_{11} \in {\rm Ind}_{B(q)}^{H(q)}(\alpha)$ and $v_{22} \in {\rm Ind}_{B(q)}^{H(q)}(\beta)$. Hence, as $(\mathbb F_2)_+$ is a submonoid of $\mathbb{F}_2$ by definition of a Borel--Weil datum, it contains the monoid generated by $\alpha$ and $\beta$.
(In fact it is easy to construct directly a non-zero element in ${\rm Ind}_{B(q)}^{H(q)}(\lambda)$ for any $\lambda$ in this monoid.)
To prove the reverse inclusion, note that the Hopf algebra embedding  $\iota$ induces an embedding of weight groups $\iota' : \mathbb F_2 \hookrightarrow \mathbb Z * \mathbb Z\simeq \mathbb F_2$ such that $\iota'(u_1) = zt$ and $\iota'(u_2) = zt^{-1}$. If $\lambda \in \mathbb{F}_2$ is dominant for $H(q)$, by Proposition \ref{sub} $\iota'(\lambda) \in (\mathbb Z * \mathbb Z)_+$, and by Proposition \ref{freesl2}, such an element is a word in $z,z^{-1}, t$. It is not difficult to see that a word in $u_1$, $u_1^{-1}$, $u_2$, $u_2^{-1}$ which is  a word in $z,z^{-1}, t$ is in fact a word in $\alpha=u_1$ and $\beta=u_2^{-1}$. We conclude that the elements of $(\mathbb F_2)_+$ are the words in $\alpha$, $\beta$, as required.
\end{proof}

\subsection{Simple comodules}

Using Theorem \ref{bigfree}, we are now able to provide explicit models for the simple comodules over the Hopf algebras $H(F)$, with $F$ normalizable. For this, we introduce $R(F)$, the algebra generated by
$x_1, \ldots, x_n$ and $y_1, \ldots , y_n$, with relations
$$\sum_{k=1}^n x_ky_k=0, \ \sum_{k,l=1}^nF_{kl}y_kx_l=F_{n1}$$ 
where $F=(F_{ij})$. The algebra $R(F)$ is naturally graded by the free monoid $\mathbb N * \mathbb N = \langle \alpha , \beta\rangle$, with $\deg(x_i)=\alpha$ and $\deg(y_i)=\beta$:
$$R(F)= \bigoplus_{\lambda \in  \mathbb N * \mathbb N}R(F)_\lambda.$$

\begin{theorem}
 Let $F \in {\rm GL}(n,k)$ be normalizable ($n \geq 2$). There exists a right $H(F)$-comodule algebra structure  $\rho : R(F) \to R(F) \otimes H(F)$ on $R(F)$ defined by
\[
\rho(x_i)=\sum_{k=1}^n x_k \otimes u_{ki}, \quad \rho(y_i) = \sum_{k=1}^n y_k \otimes v_{ki}.
\]
Moreover for any $\lambda \in \mathbb N * \mathbb N$, $R(F)_\lambda$ is an $H(F)$-subcomodule, and contains a unique simple $H(F)$-subcomodule, denoted $L(\lambda)$. We get in this way a bijection $\mathbb N * \mathbb N \simeq {\rm Irr}(H(F))$, $\lambda \mapsto [L(\lambda)]$. 
\end{theorem}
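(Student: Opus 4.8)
The plan is to reduce everything to the already-established Borel--Weil machinery for $H(q)$ via the tensor equivalence, and to realize $R(F)$ (for the standard $F$) as a concrete model for the induced comodules. First I would observe that, since the tensor categories of $H(F)$-comodules and $H(q)$-comodules are equivalent, and since $R(F)$ is built from the defining matrices $u,v$, it suffices to produce the comodule algebra structure for general $F$ and then match its graded pieces with the spaces ${\rm Ind}_{B(q)}^{H(q)}(\lambda)$ computed in Theorem~\ref{bigfree}. Concretely, to check that $\rho$ is a well-defined comodule algebra map I would verify that the two defining relations $\sum_k x_k y_k$ and $\sum_{k,l} F_{kl} y_k x_l - F_{n1}$ are sent by $\rho$ into $R(F) \otimes H(F)$ in a way compatible with the quotient --- i.e.\ that the relations are preserved. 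This is a direct computation using the defining relations $u v^t = v^t u = I_n$ and $vF u^t F^{-1} = F u^t F^{-1} v = I_n$ of $H(F)$; the first relation of $R(F)$ corresponds to the $u v^t = I_n$ off-diagonal vanishing, and the second encodes $v F u^t F^{-1} = I_n$. The coassociativity and counit axioms for $\rho$ follow formally from multiplicativity of $u$ and $v$.

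Next I would establish that each $R(F)_\lambda$ is a subcomodule: this is immediate once one checks that $\rho$ respects the $\mathbb N * \mathbb N$-grading, since $u_{ki}$ and $v_{ki}$ carry weights $\alpha$ and $\beta$ respectively under $\psi,\psi'$, so $\rho(R(F)_\lambda) \subseteq R(F)_\lambda \otimes H(F)$. Granting the tensor equivalence, I would transport the problem to $H(q)$ and identify $R(q)_\lambda$ with (a space closely related to) ${\rm Ind}_{B(q)}^{H(q)}(\lambda)$. The cleanest route is to recognize the generators $x_i, y_i$ as spanning a comodule whose coefficients realize the regular-type coaction, so that the highest-weight vector of $R(F)_\lambda$ (the monomial $x_1^{?}\cdots$, appropriately chosen) is a $B'(q)$-stable line of weight $\lambda$. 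Then Proposition~\ref{pw} applies: for $\lambda$ dominant (i.e.\ a word in $\alpha,\beta$, which by Theorem~\ref{bigfree} is exactly $(\mathbb F_2)_+$), the space ${\rm Ind}_{B(q)}^{H(q)}(\lambda)$ contains a unique simple subcomodule $L(\lambda)$ with highest $B'$-weight $\lambda$, and by Theorem~\ref{remain} the assignment $\lambda \mapsto [L(\lambda)]$ is a bijection onto ${\rm Irr}(H(q)) \cong {\rm Irr}(H(F))$.

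The main obstacle I expect is the precise identification of $R(F)_\lambda$ with ${\rm Ind}_{B(q)}^{H(q)}(\lambda)$: one must show $R(F)_\lambda$ is nonzero, is indecomposable with a unique simple submodule, and has the correct highest weight $\lambda$ (and not something smaller). The cleanest way to control this is to compute $R(F)_\lambda$ explicitly --- using the two defining relations to put monomials in $x_i, y_i$ into a normal form and thereby find a basis of each graded piece --- and then to exhibit an $H(F)$-colinear embedding $R(F)_\lambda \hookrightarrow {\rm Ind}_{B(q)}^{H(q)}(\lambda)$ (or, after transport, directly into $H(q)$ via the coaction coefficients), verifying that the image lands in the induced space by checking the defining condition $\pi(z_{(1)}) \otimes z_{(2)} = \nu(\lambda) \otimes z$. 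Uniqueness of the simple submodule of $R(F)_\lambda$ then follows from uniqueness of the $B'$-stable line in ${\rm Ind}_{B(q)}^{H(q)}(\lambda)$ (Proposition~\ref{pw}), transported through the equivalence. Finally, that $\lambda \mapsto [L(\lambda)]$ is a bijection on all of $\mathbb N * \mathbb N$ is exactly the content of Theorem~\ref{bigfree} combined with Theorem~\ref{remain}, so no further parametrization argument is needed --- the work is entirely in showing $R(F)$ furnishes the explicit models, i.e.\ that its graded pieces are the induced comodules.
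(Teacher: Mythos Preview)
Your strategy is essentially the paper's: verify the comodule algebra structure directly, observe that the grading is preserved, then in the $H(q)$ case embed each $R(q)_\lambda$ into ${\rm Ind}_{B(q)}^{H(q)}(\lambda)$ so that Proposition~\ref{pw} forces a unique simple submodule, and finally transport to general normalizable $F$ via the tensor equivalence (which carries $R(F)$ to $R(q)$ with its grading). Two small corrections will sharpen the argument. First, you do \emph{not} need to identify $R(q)_\lambda$ with the induced module, nor to prove it is indecomposable: an injective $H(q)$-colinear map $R(q)_\lambda \hookrightarrow {\rm Ind}_{B(q)}^{H(q)}(\lambda)$ is enough, since any simple subcomodule of the source is then a simple subcomodule of the target, and the target has exactly one by Proposition~\ref{pw}. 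Second, the embedding is given concretely by the $H(F)$-comodule algebra map $\Psi:R(F)\to H(F)$, $\Psi(x_i)=u_{1i}$, $\Psi(y_i)=v_{ni}$; the only genuine technical point is its injectivity for diagonal $F$, which the paper handles via the diamond lemma (your ``normal form'' remark is exactly this).
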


\begin{proof}
The existence of the announced comodule algebra structure is a straightforward verification, and it is also immediate that each $R(F)_\lambda$ is a subcomodule. Similarly one checks the existence of an $H(F)$-comodule algebra map
$\Psi : R(F) \to H(F)$ defined by
\[
\Psi(x_i)=u_{1i}, \quad \Psi(y_i)= v_{ni}.
\]
If $F$ is diagonal, one checks, using the diamond Lemma as in \cite{bi2}, that $\Psi$ is injective.
We now restrict for a moment to the case $H(F)=H(q)$ (and we put $R(q)=R(F)$). We have $\Psi(R(q)_\alpha) \subset {\rm Ind}_{B(q)}^{H(q)}(\alpha)$ and  $\Psi(R(q)_\beta) \subset {\rm Ind}_{B(q)}^{H(q)}(\beta)$, and more generally
 $\Psi(R(q)_\lambda) \subset {\rm Ind}_{B(q)}^{H(q)}(\lambda)$ for any $\lambda \in \mathbb N * \mathbb N$.
Since $\Psi$ is an injective morphism of comodules, it follows from Theorem 7.1 and Proposition \ref{pw} that 
$R(q)_\lambda$ contains a unique simple comodule, isomorphic to the simple comodule $L(\lambda)$ of Proposition \ref{pw}. The last assertion for $H(q)$ follows from Theorem \ref{remain}. 

Back to general case, let $q \in k^\times$ be such that the tensor categories of comodules over $H(F)$ and $H(q)$ are equivalent \cite{bi2}. Using the techniques of \cite[Section 5]{bicogro}, we leave it to the  reader to check that the tensor equivalence transforms the $H(F)$-comodule algebra $R(F)$ into the $H(q)$-comodule algebra $R(q)$, with preservation of the grading, and hence the case of $H(q)$ concludes the proof.
\end{proof}

\subsection{Grothendieck ring}

In this subsection we explain how some results of Chirvasitu \cite{chi} on the structure of the Grothendieck ring $K(H(F))$ (where $F$ is normalizable) can also be derived from our results. As explained in \S\ref{ss:cosovereign-big-cell} it is sufficient to consider the case of $H(q)$ for $q \in k^\times$.

Theorem \ref{bigfree} proves that the simple $H(q)$-comodules are parametrized by $\mathbb{N} * \mathbb{N}$; but its proof also provides a concrete description of these simple comodules. In fact, using the notation of this proof, if $\lambda \in \mathbb{N} * \mathbb{N}=(\mathbb{F}_2)_+$, the restriction of the simple comodule $L(\lambda)$ to $\mathcal{O}(\mathrm{SL}_q(2)) * k[z,z^{-1}]$ under the embedding $\iota$ is the simple $\mathcal{O}(\mathrm{SL}_q(2)) * k[z,z^{-1}]$-comodule $L(\iota'(\lambda))$, which itself is a simple alternated comodule (see Proposition \ref{freesl2} and its proof), i.e.~an ``alternating'' tensor product of simple comodules for $\mathcal{O}(\mathrm{SL}_q(2))$ and $k[z,z^{-1}]$. For instance, the restriction of $L(\alpha)$ is $k_z \otimes L(1)$, 
the restriction of $L(\beta)$ is $L(1) \otimes k_{z^{-1}}$, and the restriction of $L(\beta \alpha)$ is $L(2)$.

Recall the results of \S\ref{ss:Groth-ring-free-product} in the special case $\Gamma=\mathbb{Z}$. The morphism
\begin{equation}
\label{eqn:Grothendieck-restriction}
K \bigl( H(q) \bigr) \to K\bigl( \mathcal{O}(\mathrm{SL}_q(2)) * k[z,z^{-1}] \bigr)
\end{equation}
induced by restriction is injective, and identifies $K(H(q))$ with the submodule spanned by the classes of simple modules $L(\iota'(\lambda))$ for $\lambda \in (\mathbb{F}_2)_+$.

\begin{lemma}
\label{lem:Grothendieck-Hq}
The image of $K(H(q))$ in $K\bigl( \mathcal{O}(\mathrm{SL}_q(2)) * k[z,z^{-1}] \bigr)$ is the subalgebra generated by the classes of $k_z \otimes L(1)$ and $L(1) \otimes k_{z^{-1}}$.
\end{lemma}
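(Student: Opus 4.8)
The plan is to identify the image of $K(H(q))$ inside $K\bigl( \mathcal{O}(\mathrm{SL}_q(2)) * k[z,z^{-1}] \bigr)$ by combining the explicit parametrization of simple $H(q)$-comodules by $(\mathbb{F}_2)_+ = \mathbb{N} * \mathbb{N}$ with the algebra structure coming from Theorem \ref{remain} (the dominant monoid is closed under the tensor product, so the classes $[L(\lambda)]$ for $\lambda \in (\mathbb{F}_2)_+$ form a basis of a \emph{subalgebra} of $K(H(q))$, which is all of it). First I would recall from Lemma \ref{lem:Grothendieck-free-product} (in the case $\Gamma = \mathbb{Z}$) the isomorphism $\varphi : \mathbb{Z}[T] * \mathbb{Z}\Gamma \xrightarrow{\sim} K\bigl( \mathcal{O}(\mathrm{SL}_q(2)) * k[z,z^{-1}] \bigr)$ sending $T$ to $[L(1)]$ and the generator $z$ of $\Gamma = \mathbb{Z}$ to $[k_z]$, so that the target ring is a free product and its elements are concretely words in $[L(1)]$, $[k_z]$ and $[k_{z^{-1}}]$.

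Next I would show the two stated classes lie in the image. By the description recalled just before the lemma, the restriction of $L(\alpha)$ is $k_z \otimes L(1)$ and the restriction of $L(\beta)$ is $L(1) \otimes k_{z^{-1}}$; since $\alpha, \beta \in (\mathbb{F}_2)_+$, both $[k_z \otimes L(1)]$ and $[L(1) \otimes k_{z^{-1}}]$ belong to the image of the injective morphism \eqref{eqn:Grothendieck-restriction}. As this map is a ring homomorphism whose image is a subalgebra, the whole subalgebra $\mathcal{S}$ generated by these two classes is contained in the image.

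For the reverse inclusion I would argue that every generator of the image, namely the class of each simple alternated comodule $L(\iota'(\lambda))$ with $\lambda \in (\mathbb{F}_2)_+$, already lies in $\mathcal{S}$. Writing $\lambda$ as a word in $\alpha$ and $\beta$, the image $[L(\iota'(\lambda))]$ is the product over the letters of the corresponding generator classes up to strictly lower terms in a suitable filtration. The cleanest way to make this precise is a filtration argument exactly parallel to the one used to prove Lemma \ref{lem:Grothendieck-SL2} and Lemma \ref{lem:Grothendieck-free-product}: filtering $K(H(q))$ by total length of the word $\lambda$, the product $[L(\alpha)] \cdots$ (respectively $[L(\beta)] \cdots$) of the letter-classes equals $[L(\lambda)]$ modulo classes indexed by shorter words, so by induction on the length of $\lambda$ every $[L(\lambda)]$ lies in $\mathcal{S}$. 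Applying the injective restriction morphism then gives that every $[L(\iota'(\lambda))]$ lies in $\mathcal{S}$, whence the image equals $\mathcal{S}$.

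The main obstacle I anticipate is justifying the triangularity of the multiplication with respect to the word-length filtration: one must check that multiplying a class $[L(\lambda)]$ by $[L(\alpha)]$ or $[L(\beta)]$ produces $[L(\lambda\alpha)]$ or $[L(\lambda\beta)]$ plus a combination of classes $[L(\mu)]$ with $\mu$ of strictly smaller length. This is the analogue of \eqref{eqn:tensor-product-sl2} but now in the free-product weight monoid $\mathbb{N} * \mathbb{N}$, so the ordering argument must respect the non-commutative concatenation; I would deduce it from the corresponding triangularity on the $\mathcal{O}(\mathrm{SL}_q(2))$ factor (equation \eqref{eqn:tensor-product-sl2}) transported through the embedding $\iota$ and the free-product structure of Proposition \ref{simplefree}, which is exactly the content already exploited in \S\ref{ss:Groth-ring-free-product}.
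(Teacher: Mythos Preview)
Your proposal is correct and follows essentially the same approach as the paper. The paper works directly inside $K\bigl(\mathcal{O}(\mathrm{SL}_q(2))*k[z,z^{-1}]\bigr)$, defining $\ell(\mu)$ as the total degree in $t$ of a dominant weight $\mu$ and inducting on $\ell(\iota'(\lambda))$; since $\iota'(\alpha)=zt$ and $\iota'(\beta)=tz^{-1}$, this $\ell$ coincides with your word-length filtration on $\mathbb{N}*\mathbb{N}$, and the triangularity step is exactly the use of \eqref{eqn:tensor-product-sl2} together with the alternated form of $L(\iota'(\mu))$ that you anticipated.
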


\begin{proof}
As \eqref{eqn:Grothendieck-restriction} is an algebra morphism, the image of $K(H(q))$ contains the subalgebra generated by $[k_z \otimes L(1)]=[L(\iota'(\alpha))]$ and $[L(1) \otimes k_{z^{-1}}]=[L(\iota'(\beta))]$. 

Let us now prove the reverse inclusion. Recall that the simple $\mathcal{O}(\mathrm{SL}_q(2)) * k[z,z^{-1}]$-comodules are parametrized by the submonoid of the free product $\mathbb{Z} * \mathbb{Z}$ (with generators $t$ and $z$) generated by $t$, $z$ and $z^{-1}$. If $\lambda$ is in this monoid, we denote by $\ell(\lambda) \in \mathbb{N}$ its total degree in $t$. Then $\ell$ satisfies $\ell(\lambda \mu)=\ell(\lambda) + \ell(\mu)$. We will prove by induction on $n$ that if $\ell(\iota'(\lambda)) \leq n$ then $[L(\iota'(\lambda))]$ is in the subalgebra generated by $[k_z \otimes L(1)]$ and $[L(1) \otimes k_{z^{-1}}]$. In fact, if $\ell(\iota'(\lambda)) >0$ then $\lambda$ can be written as $\mu \alpha$ or $\mu \beta$ for some $\mu$ with $\ell(\iota'(\mu)) = \ell(\iota'(\lambda))-1$. To fix notation, consider the first case. Using the fact that $L(\iota'(\mu))$ is a simple alternated comodule and equation \eqref{eqn:tensor-product-sl2}, we know that
\[
[L(\iota'(\lambda))] - [L(\iota'(\mu))] \cdot [L(\iota'(\alpha))]
\]
is a linear combination of classes $[L(\iota'(\nu))]$ with $\ell(\iota'(\nu)) < \ell(\iota'(\lambda))$.
Hence by induction this element is in the subalgebra generated by $[k_z \otimes L(1)]$ and $[L(1) \otimes k_{z^{-1}}]$. Similarly, by induction $[L(\iota'(\mu))]$ is in this subalgebra, and we deduce the same property for $[L(\iota'(\lambda))]$.
\end{proof}

Let $\mathbb{Z} \langle X,Y \rangle$ be the free ring generated by two variables $X,Y$.
Consider the ring morphism
\[
\psi : \mathbb{Z} \langle X,Y \rangle \to K(H(q))
\]
sending $X$ to $[L(\alpha)]$ and $Y$ to $[L(\beta)]$. Then we have a commutative diagram
\[
\xymatrix@C=2cm{
\mathbb{Z} \langle X,Y \rangle \ar[r]^-{\psi} \ar[d] & K \bigl( H(q) \bigr) \ar[d]^-{\eqref{eqn:Grothendieck-restriction}} \\
\mathbb{Z}[T] * \mathbb{Z}[z,z^{-1}] \ar[r]^-{\varphi}_-{\sim} & K\bigl( \mathcal{O}(\mathrm{SL}_q(2)) * k[z,z^{-1}] \bigr),
}
\]
where $\varphi$ is defined in \S\ref{ss:Groth-ring-free-product} and the left vertical morphism sends $X$ to $zT$ and $Y$ to $Tz^{-1}$. As the latter morphism is injective and $\varphi$ is an isomorphism we obtain that $\psi$ is injective. By Lemma \ref{lem:Grothendieck-Hq} this morphism is also surjective, hence it is an isomorphism. We have proved 
the following result, which generalizes \cite[Corollary 5.5]{bi2} and was first obtained in \cite[Corollary 1.2]{chi}.

\begin{proposition}
The morphism $\psi$ is a ring isomorphism $\mathbb{Z} \langle X,Y \rangle \xrightarrow{\sim} K(H(q))$.
\end{proposition}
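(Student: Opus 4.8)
The plan is to read off the result from the commutative diagram displayed just before the statement, by establishing injectivity and surjectivity of $\psi$ separately. Both maps in the bottom row are already understood: $\varphi$ is a ring isomorphism by Lemma~\ref{lem:Grothendieck-free-product} (in the special case $\Gamma = \mathbb{Z}$), and the restriction morphism \eqref{eqn:Grothendieck-restriction} is injective, since $\iota$ is a Hopf algebra embedding and the classes $[L(\iota'(\lambda))]$, for $\lambda \in (\mathbb{F}_2)_+$, are linearly independent in $K\bigl( \mathcal{O}(\mathrm{SL}_q(2)) * k[z,z^{-1}] \bigr)$. So the whole argument reduces to controlling the left vertical arrow and applying Lemma~\ref{lem:Grothendieck-Hq}.

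For injectivity of $\psi$, I would first check that the left vertical arrow $\mathbb{Z}\langle X,Y \rangle \to \mathbb{Z}[T] * \mathbb{Z}[z,z^{-1}]$, sending $X \mapsto zT$ and $Y \mapsto Tz^{-1}$, is injective. Granting this, commutativity of the diagram forces $\psi$ to be injective: if $\psi(P) = 0$, then pushing $\psi(P)$ along \eqref{eqn:Grothendieck-restriction} and transporting back via $\varphi^{-1}$ shows that the image of $P$ under the left vertical arrow vanishes, whence $P = 0$.

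For surjectivity I would invoke Lemma~\ref{lem:Grothendieck-Hq}: the image of $K(H(q))$ under \eqref{eqn:Grothendieck-restriction} is exactly the subalgebra generated by $[k_z \otimes L(1)]$ and $[L(1) \otimes k_{z^{-1}}]$. These two elements equal $\varphi(zT)$ and $\varphi(Tz^{-1})$, and by commutativity of the diagram they are precisely the images under \eqref{eqn:Grothendieck-restriction} of $\psi(X) = [L(\alpha)]$ and $\psi(Y) = [L(\beta)]$. Since \eqref{eqn:Grothendieck-restriction} is injective, it identifies $K(H(q))$ with this subalgebra, which is therefore generated by $[L(\alpha)]$ and $[L(\beta)]$; hence $\psi$ is onto, and combined with the previous paragraph it is an isomorphism.

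I expect the one genuinely combinatorial point to be the injectivity of the left vertical arrow, i.e.\ the claim that $zT$ and $Tz^{-1}$ generate a \emph{free} associative subring of $\mathbb{Z}[T] * \mathbb{Z}[z,z^{-1}]$. This is a normal-form statement: a nontrivial monomial in $X,Y$ maps to a product of factors $zT$ and $Tz^{-1}$ which, after reduction in the free product, stays a reduced alternating expression, and distinct monomials in $X,Y$ yield distinct such expressions. Concretely, by tracking the total $T$-degree together with the placement of the $z^{\pm 1}$ factors one can recover the original word letter by letter, so distinct monomials land on linearly independent elements; this is the same combinatorial observation about words that already appears in the proof of Theorem~\ref{bigfree}.
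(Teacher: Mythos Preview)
Your proposal is correct and follows essentially the same route as the paper: injectivity of $\psi$ is read off from the commutative diagram using that the left vertical arrow is injective and $\varphi$ is an isomorphism, while surjectivity comes from Lemma~\ref{lem:Grothendieck-Hq}. You have merely spelled out in more detail the combinatorial reason why $X \mapsto zT$, $Y \mapsto Tz^{-1}$ is injective, which the paper asserts without comment.
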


In fact our approach also determines the structure of the Grothendieck semi-ring $K_+(H(q))$ of the tensor category of finite dimensional $H(q)$-comodules (at least in theory), even in the case $H(q)$ is not co-semisimple. Indeed, assume for simplicity that $q$ is a primitive root of unity of odd order $N > 1$. Then if $k=nN+m$ with $n,m \geq 0$ and $m<N$ by \cite[Theorem 9.4.1]{pw} we have
\[
L(k) \cong L(m) \otimes L(n)^{(1)},
\]
where $(\cdot)^{(1)}$ denotes the Frobenius twist. Hence to describe tensor products of simple $\mathcal{O}(\mathrm{SL}_q(2))$-comodules it is sufficient to describe tensor products $L(m) \otimes L(m')$ where $m,m' < N$ and $L(n)^{(1)} \otimes L(n')^{(1)}$. However the former are uniquely determined by the rule
\[
L(m) \otimes L(1) = \begin{cases}
L(m+1) \oplus L(m-1) & \text{if } m<N-1 \\
L(1)^{(1)} \oplus 2L(N-2) & \text{if } m=N-1
\end{cases}
\]
and the later are uniquely determined by the rule
\[
L(n)^{(1)} \otimes L(1)^{(1)} \cong L(n+1)^{(1)} \oplus L(n-1)^{(1)}.
\]
These rules determine the Grothendieck semi-ring $K_+ \bigl( \mathcal{O}(\mathrm{SL}_q(2)) \bigr)$, and then it is easy to deduce the structure of the semi-ring $K_+ \bigl( \mathcal{O}(\mathrm{SL}_q(2)) * k[z,z^{-1}] \bigr)$ since every simple comodule is a simple alternated comodule. Finally one can deduce the structure of $K_+(H(q))$ since it is the semi-group generated by $[k_z \otimes L(1)]$ and $[L(1) \otimes k_{z^{-1}}]$ in $K_+ \bigl( \mathcal{O}(\mathrm{SL}_q(2)) * k[z,z^{-1}] \bigr)$. 


\section{2-cocycle deformations}

In this section we examine the behaviour of dense big cells under a 2-cocycle deformation (dual of Drinfeld twist).
We remark that the 2-cocycle deformation of a Hopf algebra having a dense big cell still has a dense big cell under the assumption that the 2-cocycle is induced by a 2-cocycle on the weight group. On the other hand we study an example that shows this is not  true if one removes the assumption.

\subsection{Deformation induced by a 2-cocycle on $\Lambda$}

Let $H$ be a Hopf algebra.  Recall (see e.g.~\cite{do})
that a 2-cocycle on $H$ is a convolution invertible linear map
$\sigma : H \otimes H \longrightarrow k$ satisfying
$$\sigma(x_{(1)}, y_{(1)}) \sigma(x_{(2)}y_{(2)},z) =
\sigma(y_{(1)},z_{(1)}) \sigma(x,y_{(2)} z_{(2)})$$
and $\sigma(x,1) = \sigma(1,x) = \varepsilon(x)$, for $x,y,z \in H$. 
As an example, if $H=k\Gamma$ is a group algebra, any ordinary 2-cocycle $\sigma \in Z^{2}(\Gamma,k^\times)$ extends by linearity to a 2-cocycle on the Hopf algebra $k\Gamma$.

To  a 2-cocycle $\sigma$ on $H$, one associates \cite{do} the 
Hopf algebra $H^{\sigma}$ defined as follows.  
As a coalgebra $H^{\sigma} 
= H$. The product 
in $H^{\sigma}$ is defined by
$$[x] [y]= \sigma(x_{(1)}, y_{(1)})
\sigma^{-1}(x_{(3)}, y_{(3)}) [x_{(2)} y_{(2)}]
\qquad \text{for } x,y \in H,$$
where an element $x \in H$ is denoted $[x]$, when viewed as an element 
of $H^{\sigma}$. See \cite{do} for the formula defining the antipode.  The Hopf algebras $H$ and $H^\sigma$ have equivalent tensor categories of comodules, see e.g.~\cite{sc1}.

Let $f : H \to L$ be a Hopf algebra map and let
$\sigma : L \otimes L \to k$ be a  2-cocycle on $L$.
Then $\sigma_{f} = \sigma \circ (f \otimes f) : H \otimes H \to k$ is a  2-cocycle.
In what follows the cocycle $\sigma_f$ will simply be denoted by $\sigma$; this should not cause any confusion.

The following result provides new examples of Hopf algebras having a dense big cell. The proof is immediate.

\begin{proposition}\label{cocybig}
 Let $H$ be a Hopf algebra having a dense big cell $(B,B', \Lambda)$ and let $\sigma \in Z^2(\Lambda,k^\times)$. Then $(B^{\sigma}, B'^{\sigma}, \Lambda)$ is a dense big cell for $H^\sigma$.
\end{proposition}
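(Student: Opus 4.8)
The plan is to verify the two axioms of a dense big cell for $(B^\sigma, B'^\sigma, \Lambda)$ directly, exploiting the fact that a $2$-cocycle deformation changes only the multiplication and leaves the coalgebra structure untouched. First I would set up the bookkeeping of cocycles. Following the paper's convention I write $\sigma$ also for the induced cocycle $\sigma \circ (\psi \otimes \psi)$ on $B$, for $\sigma \circ (\psi' \otimes \psi')$ on $B'$, and for $\sigma \circ ((\psi\pi) \otimes (\psi\pi))$ on $H$. Using $\psi \circ \pi = \psi' \circ \pi'$, these are compatible: the cocycle on $H$ is the pullback along $\pi$ of the one on $B$, and also the pullback along $\pi'$ of the one on $B'$. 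I would then record the elementary companion to the pullback statement made just before the proposition: if $f : H \to L$ is a Hopf algebra map and $\sigma$ a cocycle on $L$, then $f$ remains a Hopf algebra map $H^{\sigma_f} \to L^\sigma$. This is immediate, since $f$ is unchanged as a coalgebra map and a one-line computation shows it respects the deformed products. Applying this to $\pi,\pi',\psi,\psi'$ shows that all four remain surjective Hopf algebra maps between the deformed algebras (surjectivity is clear, the underlying linear maps being unchanged). I would also note that $(k\Lambda)^\sigma = k\Lambda$, because on group-like elements the factors $\sigma$ and $\sigma^{-1}$ in the deformed product cancel; thus $\psi : B^\sigma \to (k\Lambda)^\sigma = k\Lambda$ and $\psi' : B'^\sigma \to k\Lambda$ are maps to $k\Lambda$ as required.

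For the first axiom, I would observe that being group-like is a purely coalgebraic condition, so ${\rm Gr}(B^\sigma) = {\rm Gr}(B)$ and ${\rm Gr}(B'^\sigma) = {\rm Gr}(B')$ as sets. Moreover, for $g,h \in {\rm Gr}(B)$ the deformed product gives
\[
[g][h] = \sigma(\psi(g),\psi(h)) \, \sigma^{-1}(\psi(g),\psi(h)) \, [gh] = [gh],
\]
so the group structure on group-likes is unchanged as well. Hence $\psi$ and $\psi'$ induce the very same group isomorphisms ${\rm Gr}(B^\sigma) \simeq \Lambda \simeq {\rm Gr}(B'^\sigma)$ as for the original big cell, and the identity $\psi \circ \pi = \psi' \circ \pi'$ persists, being an equality of unchanged linear maps.

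For the second axiom, the key point is that the map $\theta$ attached to $H^\sigma$, namely $(\pi \otimes \pi') \circ \Delta$, is an algebra map $H^\sigma \to B^\sigma \otimes B'^\sigma$: indeed $\Delta : H^\sigma \to H^\sigma \otimes H^\sigma$ is an algebra map ($H^\sigma$ being a bialgebra by Doi's construction \cite{do}), and $\pi \otimes \pi'$ is an algebra map between the tensor product algebras. As a \emph{linear} map, however, it coincides with the $\theta$ of the original datum, since $\Delta$, $\pi$ and $\pi'$ are all unchanged as linear maps. Injectivity being a property of the underlying linear map, and the original $\theta$ being injective by hypothesis, the deformed $\theta$ is injective as well. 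There is no genuine obstacle in this argument: the entire content is that the cocycle is pulled back compatibly from $\Lambda$, so that nothing changes at the level of coalgebras, group-likes, or the weight group, and injectivity—being a linear statement—transports verbatim. The only computation worth spelling out is the cancellation $[g][h] = [gh]$ on group-like elements, which simultaneously yields $(k\Lambda)^\sigma = k\Lambda$ and the preservation of the group structure.
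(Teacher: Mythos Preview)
Your verification is correct and is precisely the ``immediate'' argument the paper has in mind (the paper gives no proof beyond the word \emph{immediate}). One tiny point you leave implicit but which is part of the definition of a dense big cell: $B^\sigma$ and $B'^\sigma$ must be pointed; this follows at once from your opening observation that the coalgebra structure is unchanged, since pointedness is a purely coalgebraic property.
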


As an example, one can deduce from the proposition that the multiparametric quantum ${\rm GL}(n)$ from \cite{ast} has a dense big cell. (This can also be checked directly, following the verification in \cite{pw}, as in \cite{gar}.)

\subsection{Example of the Jordanian quantum ${\rm SL}(2)$}

Let us now study an example of a Hopf algebra that does not have a dense big cell. The Hopf algebra in question is $\mathcal O({\rm SL}(2)_J)$, corresponding to the so-called Jordanian quantum ${\rm SL}(2)$. This Hopf algebra arose in several independent papers (see \cite{dvl, dmmz, gu, z}; we do not claim that the list is exhaustive), and is known to be    
a 2-cocycle deformation of $\mathcal O({\rm SL}(2))$, see e.g.~\cite{eg}. In particular, its tensor category of comodules is equivalent to that of $\mathcal O({\rm SL}(2))$. We assume from now on that the base field has characteristic different from 2.

Recall that $\mathcal O({\rm SL}(2)_J)$ is the algebra generated by $a, b, c, d$, with relations
\begin{gather*}
ca-ac=c^2 = cd-dc, \quad ba-ab=1-a^2, \quad bd-db= 1-d^2 \\ 
ad-da=ac-dc, \quad cb -bc =ac+cd, \quad 1=ad-bc-ac.
\end{gather*}
The formulas for its comultiplication and counit are the same as those for $\mathcal O({\rm SL}(2))$, while the antipode is defined by
$$S\begin{pmatrix}
    a & b \\
c & d
   \end{pmatrix} = \begin{pmatrix}
d-c & a-b+c-d  \\
-c & a+c \end{pmatrix}.$$

We begin with the following easy lemma.

\begin{lemma}
Let $L$ be a Hopf algebra with involutive antipode and let $f : \mathcal O(SL(2)_J) \rightarrow L$ be a Hopf algebra map. Then $f(c)=0$, $f(a)=f(d)$ and $f(a)^2=1$. 
\end{lemma}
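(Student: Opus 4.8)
The plan is to exploit the fact that any Hopf algebra map commutes with the antipodes. Writing $S$ for the antipode of $\mathcal O({\rm SL}(2)_J)$ and $S_L$ for that of $L$, we have $f \circ S = S_L \circ f$. Applying this identity twice and using the hypothesis $S_L^2 = \mathrm{id}_L$ gives
\[
f \circ S^2 = S_L \circ f \circ S = S_L^2 \circ f = f ,
\]
so that $f(S^2(x)) = f(x)$ for every $x \in \mathcal O({\rm SL}(2)_J)$. The whole point is that the Jordanian antipode is \emph{not} involutive, so this identity is a genuine constraint. Hence the first step is to compute $S^2$ on the generators, using the given formula for $S$ together with its $k$-linearity.

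A direct computation yields
\[
S^2(a) = a + 2c, \quad S^2(c) = c, \quad S^2(d) = d - 2c, \quad S^2(b) = b - 2a + 2d - 4c .
\]
Thus $S^2 - \mathrm{id}$ takes values in the span of $a,c,d$ and is entirely controlled by $c$; this is the source of all the constraints on $f$. I will only need the formulas for $a$ and $b$.

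Next I would extract the three conclusions in turn. Applying $f \circ S^2 = f$ to $a$ gives $f(a) + 2f(c) = f(a)$, hence $2f(c) = 0$; since the base field has characteristic different from $2$, this forces $f(c) = 0$. Applying the same identity to $b$ gives $f(b) - 2f(a) + 2f(d) - 4f(c) = f(b)$, and using $f(c)=0$ we obtain $f(a) = f(d)$. Finally, applying $f$ to the relation $1 = ad - bc - ac$ gives
\[
1 = f(a)f(d) - f(b)f(c) - f(a)f(c) ,
\]
and substituting $f(c)=0$ together with $f(a)=f(d)$ leaves $f(a)^2 = 1$, as desired.

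The argument is essentially forced once the identity $f\circ S^2 = f$ is in hand, so I do not anticipate any serious obstacle. The only points requiring care are the bookkeeping in the computation of $S^2$ (in particular for $S^2(b)$), and the observation that the hypothesis $\mathrm{char}(k)\neq 2$ is exactly what is needed to pass from $2f(c)=0$ to $f(c)=0$.
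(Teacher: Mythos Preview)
Your proof is correct and follows essentially the same route as the paper: both arguments rest on the identity $f\circ S^2=f$, compute $S^2$ on the generators to obtain $S^2(a)=a+2c$, $S^2(b)=b-2a+2d-4c$, $S^2(d)=d-2c$, and then read off $f(c)=0$, $f(a)=f(d)$, and finally $f(a)^2=1$ from the determinant relation. The paper presents the computation in matrix form but the content is identical; your write-up is slightly more explicit about why $f\circ S^2=f$ holds and where the characteristic hypothesis enters.
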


\begin{proof} We have $$\begin{pmatrix}
    f(a) & f(b) \\
f(c) & f(d)
   \end{pmatrix} =
 S^2\begin{pmatrix}
    f(a) & f(b) \\
f(c) & f(d)
   \end{pmatrix} = \begin{pmatrix}
f(a)+2f(c) & -2f(a)+f(b)-4f(c)+2f(d)  \\
f(c) & -2f(c)+f(d) \end{pmatrix}.$$
It follows that $f(c)=0$ and  that $f(a)=f(d)$. The identity $f(a)^2=1$ then follows from $1=ad-bc-ac$.
\end{proof}

\begin{proposition}
 The Hopf algebra $\mathcal O(SL(2)_J)$  does not have a dense big cell.
\end{proposition}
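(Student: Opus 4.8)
The plan is to argue by contradiction, exploiting the preceding lemma to show that a dense big cell would force the weight group $\Lambda$ to be finite, which is incompatible with $\mathcal{O}(\mathrm{SL}(2)_J)$ having infinitely many simple comodules. So suppose that $\mathcal{O}(\mathrm{SL}(2)_J)$ admits a dense big cell $(B,B',\Lambda)$, with structural maps $\pi,\psi,\pi',\psi'$ as in the definition. The first step is to consider the composite Hopf algebra map $f = \psi \circ \pi = \psi' \circ \pi' : \mathcal{O}(\mathrm{SL}(2)_J) \to k\Lambda$ (the two composites agree by the first axiom of a dense big cell). Since $k\Lambda$ is a group algebra, its antipode is involutive, so the preceding lemma applies to $f$ and yields $f(c)=0$, $f(a)=f(d)=:g$ and $g^2=1$.

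The crucial step is then to control $f(b)$. Applying $f$ to $\Delta(b)=a\otimes b+b\otimes d$ shows that $f(b)$ is a $(g,g)$-skew-primitive element of $k\Lambda$. But the coradical of $k\Lambda$ is all of $k\Lambda$, so its coradical filtration is trivial and every skew-primitive is of the trivial type $k(g-h)$; for $g=h$ this space is zero, whence $f(b)=0$ (a one-line computation in the group basis confirms this directly). Since $a,b,c,d$ generate $\mathcal{O}(\mathrm{SL}(2)_J)$ as an algebra, the image of $f$ is generated by $f(a),f(b),f(c),f(d)=g,0,0,g$, hence equals the subalgebra $k[g]$, which has dimension at most $2$ because $g^2=1$. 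As $f$ is a composite of surjections it is surjective, so $k\Lambda=k[g]$ and therefore $\Lambda=\langle g\rangle$ has at most two elements.

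Finally I would derive the contradiction. By Theorem \ref{main}, $(\mathcal{O}(\mathrm{SL}(2)_J),B)$ is a Borel--Weil datum with weight group $\Lambda$, so $\mathrm{Irr}(\mathcal{O}(\mathrm{SL}(2)_J))$ is in bijection with $\Lambda_+\subseteq\Lambda$, a set of cardinality at most $2$. This is absurd: $\mathcal{O}(\mathrm{SL}(2)_J)$ is a $2$-cocycle deformation of $\mathcal{O}(\mathrm{SL}(2))$, so its tensor category of comodules is equivalent to that of $\mathcal{O}(\mathrm{SL}(2))$, and in particular it has infinitely many isomorphism classes of simple comodules.

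The main obstacle is the middle step, namely the vanishing of $f(b)$: it is precisely this collapse of the images of the generators that makes $\mathrm{Im}(f)$ too small to surject onto an infinite group algebra, and without it one could not conclude that $\Lambda$ is finite. Everything else is either a direct application of the preceding lemma (to the well-chosen target $k\Lambda$, whose involutive antipode is the only place where that hypothesis is used) or a formal consequence of Theorem \ref{main} together with the tensor equivalence with $\mathcal{O}(\mathrm{SL}(2))$.
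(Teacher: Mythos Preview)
Your proof is correct and follows essentially the same route as the paper's: both argue by contradiction, apply the preceding lemma to the surjection $f:\mathcal O(\mathrm{SL}(2)_J)\to k\Lambda$, conclude that $\Lambda$ has at most two elements, and contradict the infinitude of $\mathrm{Irr}(\mathcal O(\mathrm{SL}(2)_J))$. The only difference is in how $|\Lambda|\le 2$ is obtained: you compute directly that $f(b)=0$ via a skew-primitive argument in $k\Lambda$, whereas the paper observes that $f(c)=0$ makes the image of the defining multiplicative matrix upper triangular and then invokes Proposition~\ref{pointed} to see that $\Lambda=\mathrm{Gr}(k\Lambda)$ is generated by the diagonal entry $g$ (so knowledge of $f(b)$ is not needed).
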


\begin{proof}
 Assume that $H=\mathcal O(SL(2)_J)$ has a dense big cell. In particular, by Theorem \ref{remain}, there exists a group $\Gamma$, a surjective Hopf algebra map $f : H \rightarrow k\Gamma$ and an injective map ${\rm Irr}(H) \hookrightarrow \Gamma$. The lemma and Proposition \ref{pointed} then ensure that $\Gamma \simeq {\rm Gr}(k\Gamma)$ has at most two elements, which contradicts the existence of the injection ${\rm Irr}(H) \hookrightarrow \Gamma$.
\end{proof}

\end{document}